\newcommand*{\BF}[1]{\ifmmode\bm{#1}\else\textbf{#1}\fi}
\newcommand\ocirc[1]{\ensurestackMath{\stackon[1pt]{#1}{\mkern2mu\circ}}}
\definecolor{codegreen}{rgb}{0,0.6,0}
\definecolor{codegray}{rgb}{0.5,0.5,0.5}
\definecolor{codepurple}{rgb}{0.58,0,0.82}
\definecolor{backcolour}{rgb}{0.95,0.95,0.92}
\lstdefinestyle{mystyle}{
    backgroundcolor=\color{backcolour},   
    commentstyle=\color{codegreen},
    keywordstyle=\color{magenta},
    numberstyle=\tiny\color{codegray},
    stringstyle=\color{codepurple},
    basicstyle=\footnotesize,
    breakatwhitespace=false,         
    breaklines=true,                 
    captionpos=b,                    
    keepspaces=true,                 
    numbers=left,                    
    numbersep=5pt,                  
    showspaces=false,                
    showstringspaces=false,
    showtabs=false,                  
    tabsize=2
}
\newtheorem{theorem}{Theorem}[section]
\newtheorem{lem}[theorem]{Lemma}
\newtheorem{prop}[theorem]{Proposition}
\newtheorem{defn}{Definition}[section]
\newtheorem{rem}{Remark}[section]
\newtheorem{exa}{Example}[section]
\numberwithin{equation}{section}
\renewcommand{\a}{{\alpha}}
\newcommand{\e}{\varepsilon}
\newcommand{\de}{\delta}
\newcommand{\fa}{\varphi}
\newcommand{\ga}{\gamma}
\renewcommand{\k}{{\BF k}}
\newcommand{\la}{\lambda}
\newcommand{\si}{\sigma}
\newcommand{\De}{\Delta}
\newcommand{\Ga}{\Gamma}
\newcommand{\Om}{\Omega}
\newcommand{\lan}{\langle}
\newcommand{\ran}{\rangle}
\newcommand{\ab}{{{\BF a}_0}}
\newcommand{\G}{{\mathcal G}}
\newcommand{\PZ}{{\mathbb Z}_+}
\newcommand{\V}{\Gamma}
\def\T{{\mathbb T}}
\def\N{{\mathbb{N}}}
\def\T_N{{\T_N}}
\def\T{{\mathbb{T}}}
\def\M{{\mathcal{M}}}
\def\R{{\mathbb R}}
\def\Bg{{\mathbf{g}}}
\def\Bk{{\BF{k}}}
\def\Y{{\mathcal Y}}
\def\aa{{\BF a}}
\def\W{{\mathcal W}}
\def\A{{\mathcal A}}
\def\E{{\mathbb E}}
\renewcommand\P{{\mathbb P}}
\def\M{{\mathcal M}}
\newcommand{\HW}{\widetilde{H}}
\newcommand{\I}{\mathcal{I}}
\newcommand{\B}{\mathcal{B}}
\newcommand{\K}{\mathcal{K}}
\def\M{{\mathcal M}}
\def\K{{\mathcal K}}
\def\D{{\mathcal D}}
\def\I{{\mathcal I}}
\def\F{{\mathcal F}}
\def\z{{\BF z}}
\def\t{{\BF t}}
\def\y{{\BF y}}
\def\Z{{\mathbb Z}}
\newcommand\ve{\varepsilon}
\newcommand{\mc}[1]{{\mathcal #1}}
\newcommand{\bb}[1]{{\mathbb #1}}
\title[Derivation of coupled KPZ-Burgers equation]{Derivation of coupled KPZ-Burgers equation \\
from multi-species zero-range processes}
\author{C. Bernardin}
\address{Universit\'e C\^ote d'Azur, CNRS, LJAD\\
Parc Valrose\\
06108 NICE Cedex 02, France}
\email{{\tt cbernard@unice.fr}}
\author{T. Funaki} 
\address{Department of Mathematics\\
School of Fundamental Science and Engineering\\
Waseda University\\
3-4-1 Okubo, Shinjuku-ku\\
Tokyo 169-8555, Japan}
\email{{\tt funaki@ms.u-tokyo.ac.jp}}
\author{S. Sethuraman}
\address{Department of Mathematics\\
University of Arizona\\
621 N. Santa Rita Ave.\\
Tucson, AZ 85750, USA}
\email{{\tt sethuram@math.arizona.edu}}
\date{\today}
\begin{document}

\begin{abstract}
We consider the fluctuation fields of multi-species weakly-asymmetric zero-range interacting particle systems in one dimension, where the mass density of each species is conserved.  Although such fields have been studied in systems with a single species, the multi-species setting is much less understood.  Among other results, we show that, when the system starts from stationary states, with a particular property, the scaling limits of the multi-species fluctuation fields, seen in a characteristic traveling frame, solve a coupled Burgers SPDE, which is a formal spatial gradient of a coupled KPZ equation.  
\end{abstract}

\maketitle

\smallskip
\noindent {\it{Keywords: }}{interacting, particle system, zero-range, multi-species, weakly asymmetric, fluctuation, field, Burgers, coupled, KPZ}

\smallskip
\noindent {\it 2010 Mathematics Subject Classification.} 60K35, 82C22, 60H15, 60G60, 60F17


\section{Introduction}
The purpose of this article is to consider scaling limits of the empirical mass fluctuation fields of weakly asymmetric multi-species zero-range processes on a sequence of tori $\T_N = \{0,1,\ldots, N-1\}$, as an approximation of ${\mathbb Z}$, where the mass density of each species is conserved, starting from stationary initial conditions.  Although such limits have been worked out with respect to single species zero-range and other particle models, there appears to be little work on fluctuations for multi-component systems.  
In particular, in a certain characteristic frame of reference, the single component fluctuation limit, starting from several types of initial conditions, through a variety of techniques, has been shown to satisfy a `scalar' KPZ-Burgers equation, an object at the center of much recent study:  For instance, \cite{ACQ}, \cite{Assing}, \cite{BG}, \cite{Corwin_Tsai}, \cite{Dembo_Tsai}, \cite{Funaki_Quastel}, \cite{GJ}, \cite{GJS}, \cite{GPS}, \cite{Gub_J}, \cite{Gubinelli_Imkeller_Perkowski},  \cite{GP18}, \cite{GPnew}, \cite{H}, \cite{Kupiainen}, \cite{Ortmann_Quastel_Remenik}, \cite{Sasamoto_Spohn}, and reviews \cite{Corwin_review}, \cite{KK}, \cite{Quastel_Spohn}, and references therein.
In this context, it is a natural question to investigate the connections between the fluctuations when there is more than one conservation law and `coupled' KPZ-Burgers systems.

On the other hand, recently, in coupled driven systems with several conservation laws, the works \cite{PSSS} and \cite{Spohn_Stolz} have proposed a suite of universality classes for the scaling limits of the associated fluctuations.
 In this respect, one might ask in what sense do the fluctuations of the weakly-asymmetric systems with several species, that we study, relate to these works.

In a nutshell, our main result, Theorem \ref{secondorder_thm}, is that, when the characteristic speed of each component is the {\it same}, the mass fluctuation limit of the multi-species process, seen in traveling frame with this speed, satisfies a type of martingale problem for a coupled KPZ-Burgers equation, shown to have unique solution in \cite{GPnew}.  
Our condition, called the `Frame condition (FC)', that the characteristic speeds of all the fluctuation field components be the same is a condition on the the density of the stationary state and as well the rate function of the zero-range process.  In particular, not every stationary state satisfies this condition, and part of the results is to determine when this condition holds.  This is the first microscopic derivation of a singular coupled SPDE system. 

The methods of the article are to develop martingale representations of the fluctuation fields and to close equations by averaging nonlinear rate terms in terms of the fields themselves through a multi-component Boltzmann-Gibbs principle.  Limit points of the scaled fields are shown to satisfy a multi-component `energy' formulation, which extends that in the `scalar' case \cite{GJ}, \cite{GJS}.  Combining with the recent work \cite{GPnew} (see also \cite{GP18} in the `scalar' setting), which shows uniqueness of solutions of the `energy' framework, we may identify the process.

 Although we work on tori $\T_N$, all of our results and proofs translate straightforwardly to systems in the infinite volume ${\mathbb Z}$.  However, since the uniqueness of the martingale problem for the general coupled KPZ-Burgers equations has only been shown in finite volume $\T=[0,1)$ \cite{GPnew}, we have chosen to work on the discrete tori $\T_N$.  We remark, however, when the multi-component particle system is a `multi-color' zero-range system, where particles may be one of $n$ colors but behave the same mechanistically, uniquness criteria on ${\mathbb Z}$ for the associated martingale formulation is known, and we explain in this case in Subsection \ref{multi_color_kpz} that our main result, Theorem \ref{secondorder_thm}, can be stated as an infinite volume result on ${\mathbb Z}$.  

Since all components of the coupled SPDE that we derive (cf. \eqref{gen_OU_sec}) have the same drift, a regime not considered in \cite{PSSS} or \cite{Spohn_Stolz}, the question of which universality class its fluctuations belong to is perhaps complementary to the discussion in these papers.  In this respect, we detail, in Section \ref{example_section}, in `multi-color' systems, when the general SPDE reduces to the SPDE \eqref{eq:5.2_reduced}, a certain structure in the limit equation.  Moreover, with respect to the general SPDE \eqref{gen_OU_sec}, we show it satisfies a `trilinear' condition \eqref{eq:7.tri} in Section \ref{trilinear} (cf. \cite{F18}, \cite{FH17}), and comment that a more varied equation structure looks possible.
Some calculations are given in Section \ref{ND_section} when $n=2$, as well as questions for further work when $n\geq 3$ in this direction.

To be complete, we also consider the mass fluctuation limits of the multi-species process, in a fixed, unmoving frame, which satisfy linear SPDE, Theorem \ref{drift_thm}.

After a brief discussion of the notation used in the article, we now introduce informally the model considered, and make heuristic computations in the following subsections of the Introduction, which convey the ideas leading to our main results.  In Section \ref{models}, we detailed more carefully the models studied, and in Section \ref{results}, we state the results.  In Section \ref{proofs}, proofs are given, relying on a multi-component `Boltzmann-Gibbs' principle, shown in Section \ref{proof_Boltzmann}.  Finally, in Sections \ref{trilinear}, \ref{example_section} and \ref{ND_section}, we show the general SPDE limit satisfies a `trilinear' condition, discuss the `multi-color' reduced model, and discuss the `non-decoupled' structure of the general limit SPDE.

\section*{Notation}
We use boldface letters to denote vectors: if ${\BF a} \in \R^n$, its components are denoted by $(a^1,\ldots, a^n)$ or $(a^i)_{i=1}^n$. The usual scalar product in $\R^n$ is denoted by $\cdot$ and the corresponding norm by $\| \cdot \|$. The $L^1$ norm of $\BF a$ is denoted by $|\BF a| =|a^1|+\cdots+|a^n|$. The entries of a matrix $M$ of size $p\times n$ are denoted by $M_{ij}$, $1\le i \le p$, $1\le j \le n$, and we write $M=(M_{ij})_{1\le i\le p,  1\le j \le n}$. If $M$ is diagonal then we write $M={\rm{diag}}((M_{ii})_{1\le i \le n})$. The continuous torus $[0,1)$ is denoted by $\T$ and the space of $\R^n$-valued smooth functions on $\T$ by ${\mathcal D} (\T)^n$. If $H,G: \T \to \R^n$ are $\R^n$-valued functions, their $L^2$-scalar product is denoted by $(H,G)_{L^2 (\T)^n}$ and the corresponding norm by $\|\cdot\|_{L^2 (\T)^n}$.

 If $\Psi :=(\Psi^1, \ldots, \Psi^p): {\BF a} \in \R^n \to \Psi ({\BF a}) \in \R^p$ is a differentiable function, we denote the $j$th partial derivative of $\Psi^i$ by $\partial_{a^j} \Psi^i$ and by $\nabla \Psi$ the $p\times n$ matrix with entries given by $\Big( \partial_{a^j} \Psi^i \Big)_{1\le i \le p, 1 \le j \le n}$. 

Given two real-valued functions $f$ and $g$ depending on the variable $u \in \bb R^d$ we will write $f(u) \approx g(u)$ if there exists a constant $C>0$ which does not depend on $u$ such that for any $u$, $C^{-1} f(u) \le g(u) \le C f(u)$ and $ f(u) \lesssim g(u)$ if for any $u$, $f(u) \le C g(u)$. We write $f = O (g)$ (resp. $f=o(g)$) in the neighborhood of $u_0$ if $| f| \lesssim | g|$ in the neighborhood of $u_0$ (resp. $\lim_{u \to u_0} f(u)/g(u) =0$). Sometimes it will be convenient to 
make precise the dependence of the constant $C$ on some extra parameters and this will be done by the standard notation $C(\lambda)$ if $\lambda$ is the extra parameter.

Usually, the notation $E_\mu$ denotes the expectation with respect to the probability measure $\mu$ and the notation $\bb E_\mu$ is reserved for expectation with respect to the law $\bb P_\mu$ of the paths of the involved stochastic processes starting from $\mu$.  The subscript is omitted when the context is clear.

\subsection{Summary of derivations}

Informally, the single component zero-range process consists of a collection of particles on $\T_N$ which interact infinitesimally only with those on their own spatial location.  More precisely, if there are $k$ particles at $x\in \T_N$, then with rate $g(k)$ one of these particles jumps, moving to a location $y$ with probability $p(y-x)$.  In the multi-component process, there are several types of particles, say $n\ge 1$.  For $1\leq i\leq n$, when there are $k_i$ particles of type $i$ at location $x$, a type $i$ particles jumps with rate $g_i(k_1,\ldots, k_n)$, moving to $y$ again with chance $p(y-x)$.  

The weakly-asymmetric process $\{\BF{\alpha}_t^N\, ; \, t\ge 0\} = \{(\alpha^{1,N}_t,\ldots, \alpha^{n,N}_t)\, ;\, t\ge 0\}$ corresponds to when the jump probability $p$ is in the form $p(1) = 1/2 + c/N^{\gamma}$, $p(-1)=1/2-c/N^{\gamma}$ and $p(z)=0$ otherwise.  Here, $N$ is a scaling parameter, $c\in {\mathbb R}$ and $\gamma> 0$.  We will study the process in diffusive time scale. Accordingly, the configuration $\a^N_t : x\in \T_N \to \a^N_t(x)\in \Z_+^n $ specifies the number of particles of each type $1\leq i\leq n$ at each location $x\in \T_N$ at time $N^2t$ for $t\geq 0$. Observe that the number of particles of each type is preserved during the time evolution. Therefore we expect that the system has a family of invariant measures indexed by type particle densities. In Subsection \ref{invariant_section}, we discuss that the multi-component systems possesses a family of invariant product measures $\nu_\ab$, indexed by component densities $\ab=(a^1_0,\ldots, a^n_0)$, when the rates $(g_1,\ldots, g_n)$ satisfy a compatibility relation.  

\subsubsection{Hydrodynamics}

Before developing the fluctuation limits, it will be useful to consider the hydrodynamics or law of mass averages of the process. 
We take $\ga=1$ such that $p(1)= \frac12+\frac{c}N$ and $p(-1)= \frac12-\frac{c}N$.
Consider a $n$-dimensional valued macroscopic empirical profile process $\{{\BF X}_t^N \; ;\; t\ge 0\} = \{(X_{t}^{i,N})_{i=1}^n\; ; \; t\ge 0\}$ defined for any time $t\ge 0$ by
$$
X_{t}^{i,N}(u) = \sum_{x=1}^{N-1} \alpha_{t}^{N,i}(x) \; {\BF 1}_{\big[\tfrac{x}N, \tfrac{x+1}{N}\big) } (u), \quad u \in \T.
$$

The hydrodynamic limit for the $n$-species system can be stated as follows: for each time $t\ge 0$, $({\BF X}_{t}^{N})_N$ converges \footnote{in the sense that for any smooth function $H:\T \to \R$, $\Big(\int_\T H(u) \, X_{t}^{i,N} (u) \, du\Big)_N$ converges in probability to $\int_\T H(u) \, a^i (t, u) du$.} to ${\BF a} (t,\cdot)$ where ${\BF a}(t,\cdot)=(a^1(t,\cdot), \ldots, a^N (t, \cdot))$ and the limit densities $a^i (t, \cdot)$ are the solution of the $n$ coupled system:
\begin{align}\label{eq:3.1}
\partial_t a^i = \tfrac12 \Delta(\tilde g_i({\BF a})) + 2c \nabla(\tilde g_i ({\BF a})),
\quad 1\le i \le n,
\end{align}
where $\Delta=\partial_u^2$, $\nabla=\partial_u$ and  $\tilde g_i$ is an averaged or homogenized function reflecting the microscopic jump rates
defined by
\begin{equation}
\label{tilde g eq}
\tilde g_i({\BF a}):=E_{\nu_{\BF a}} \left[g_i(\a(0))\right], \quad i=1,\ldots,n.
\end{equation}

One can derive the hydrodynamic equation \eqref{eq:3.1} at least heuristically as follows.
Take a test function $G\in {\mathcal D} (\T)$. Then, by Dynkin's formula and Taylor's expansion, we have
\begin{align*}
&\E \Big[ \int_{\T} X_t^{i,N} (u) G(u) du \Big] -\E \Big[\int_\T X_0^{i,N} (u) G(u) du \Big]\\
& = \frac1{2N} \sum_x \int_0^t \E[g_i(\a_s (x))] (\Delta G) (\tfrac{x}N) \, ds + \frac{2c}N \sum_x \int_0^t \E[  g_i(\a_s (x))]  (\nabla G) (\tfrac{x}N) \, ds \; + \; o(1).
\end{align*}
By local ergodicity (cf. \cite{G}),
one can replace any expression like $N^{-1} \sum_x F(\tfrac xN )\E[g_i({\BF \alpha}_s (x)]$ by its ensemble mean $\int_\T \tilde g_i ({\BF X}_s^{i,N} (u)) F(u) du$. Assuming that $\{{\BF X}^{N}\}_N$ converges to some macroscopic profile ${\BF a}$, we get that this profile satisfies a weak formulation of (\ref{eq:3.1}). This argument can be made precise by following the `entropy' method say in Chapter 5 of \cite{KL}.

\begin{rem} If $\gamma>1$ hydrodynamic equations are still in the form (\ref{eq:3.1}) but with $c=0$ while if $\gamma<1$, it is expected that in the time scale $tN^{1+\gamma}$ it is given by a particular weak solution of (\ref{eq:3.1}) without the diffusive term (system of conservation laws).
\end{rem}

\subsubsection{Linear fluctuations}
We discuss the stationary fluctuations from the hydrodynamic limit, again when $\gamma=1$.  For any $N\ge 1$, we assume ${\BF \alpha}_0^N \overset{\text{law}}{=} \nu_{\ab}$ at $t=0$ with some $\ab\in [0,\infty)^n$ and
consider the fluctuation field:
\begin{equation*}
\begin{split}
&\{{\BF{\Y}}_t^N\; ; \; t\ge 0\}  = \{(\Y_t^{i,N})_{i=1}^n\; ; \; t\ge 0\},\\
&\Y_t^{i,N}(du) = \frac1{\sqrt{N}} \sum_{x\in\T_N}\big(\a_{t}^{i,N}(x) -a_{0}^i\big)
\de_{\frac{x}N}(du), \quad u \in \T,
\end{split}
\end{equation*}
where $\delta_z$ denotes the Dirac mass at $z \in \T$. The limit in law $\{\Y_t\; ; \; t\ge 0\}  = \{(\Y_{t}^i)_{i=1}^n\; ; \; t\ge 0\}$ of $\{\Y^N_t \; ; \; t\ge 0\}_{N\ge 1}$ is a solution of the following linear SPDE:
\begin{align}  \label{eq:4.1}
\partial_t \Y = \tfrac12 Q(\ab) \Delta  \Y +2c Q(\ab) \nabla \Y
+ q(\ab) \nabla \dot{\W}_t,\quad t\ge 0, 
\end{align}
where $\dot{\W} = \{\dot{\W}^i \}_{i=1}^n$ is a space-time Gaussian white noise
with $n$-components and covariance
$$
\E[ \dot{\W}_t^i(u)\dot{\W}_s^j(v)] = \de^{ij} \de(t-s) \de(u-v),
$$
and
$Q(\aa)$ and $q(\aa)$ are $n\times n$ matrices such that
\begin{align*}
Q(\aa) & = \big(Q_{ij}(\aa)\big)_{1\le i,j\le n}
= \left( \partial_{a^j} (\tilde g_i(\aa)) \right)_{1\le i,j\le n},\\
q(\aa)&={\rm diag}\left(\big(q_{ii} (\aa)\big)_{1\le i\le n}\right)
= {\rm diag}\left( \left( \sqrt{\tilde g_i(\aa)}\right)_{1\le i\le n}\right) .
\end{align*}

The limit noises in \eqref{eq:4.1} are obtained in the following way.
Let $G\in {\mathcal D}(\T)$ and $M^{N,i}(G)$ be the martingale term of the semi-martingale $\Y^{N,i}(G)$.  Then, by It\^o's formula and Taylor's expansion, we have
\begin{align*}
\frac{d}{dt} \lan M^{i,N}(G)\ran_t
& = \frac1N \sum_{x\in \T_N} g_i ({\BF \alpha}^N_t (x)) \left( \nabla G (\tfrac{x}N)\right)^2 + o(1)\\
& \to  \tilde g_i(\ab) \int_\T(\nabla G (u))^2 du \quad \text{ as } \; N\to\infty,
\end{align*}
since $\a_t^N\overset{\text{law}}{=} \nu_{\ab}$ for all $t\ge 0$.
Moreover, for $i\not= j$, one can see the martingales $M^{N,i}$ and $M^{N,j}$ are uncorrelated and therefore independent in the limit, cf.\ \eqref{eq:4.cross} below.

Heuristically, the drift term in the limit \eqref{eq:4.1} is obtained as follows (cf. \cite{Spohnbook}).
Make the Taylor expansion in the hydrodynamic equation:
\begin{align*}
& a^i \, (=a^i(t,u)) = a_{0}^i + \frac1{\sqrt{N}} \Y^i+ \cdots \\
& \tilde g_i(\aa) = \tilde g_i(\ab)
+ \frac1{\sqrt{N}} \sum_{j=1}^2 \partial_{a^j} (\tilde g_i(\aa))\Big|_{\ab}  \cdot \Y^j+
 \cdots.
\end{align*}
Insert these expansions in the hydrodynamic equation
with a small noise term:
\begin{align*}
\partial_t a^i = \tfrac12 \Delta (\tilde g_i(\aa)) + 2c \nabla (\tilde g_i(\aa))
+ \frac1{\sqrt{N}} \text{(noise)}
\end{align*}
For example, since $\ab$ is a constant,
$$
\partial_t a^i = \frac1{\sqrt{N}} \partial_t \Y^i + \cdots.
$$
Multiplying both sides by $\sqrt{N}$, we obtain the limit SPDE \eqref{eq:4.1}.  We note this type of argument and result is well-known, following say the method in Chapter 11 of \cite{KL}.

\subsubsection{Nonlinear fluctuations and coupled
KPZ-Burgers equations}
\label{subsubsec:NFKPZ}

We now take $\ga=\frac12$, so that $p(1)=\frac12+\frac{c}{\sqrt{N}},
p(-1)=\frac12-\frac{c}{\sqrt{N}}$.  
In other words, $c$ is replaced by $c\sqrt{N}$ so that the hydrodynamic equation
for the $i$th particle looks like
\begin{align}  \label{eq:5.1}
\partial_t a^i = \tfrac12 \Delta(\tilde g_i(\aa)) + 2c\sqrt{N}
 \nabla (\tilde g_i(\aa))
+ \frac1{\sqrt{N}} \text{(noise)}.
\end{align}
To cancel  the diverging factor $\sqrt{N}$ in the second term, we introduce the moving frame
with speed $2c\la N^{\frac32}$ (cf.\ \cite{GJS} p.298) at the microscopic level with suitably
chosen $\la := \la(\ab)$.
$$
\Y_t^{i,N} := \frac1{\sqrt{N}}\sum_x (\a_{t}^{i,N}(x) -a_0^i) 
\, \de_{\tfrac{x}N- \tfrac{2c\la N^{3/2}t}N}(du).
$$
The frame should have a common speed for all species $1\leq i\leq n$ so that a non-trivial limit can be found.

At the macroscopic level, we expand $a^i(t,u)$ along with the center $2c\la\sqrt{N}t$
of the moving frame as
$$
a^i(t,u+2c\la\sqrt{N}t) = a_0^i+ \frac1{\sqrt{N}} \Y_t^i+ \cdots.
$$
More precisely, make the Taylor expansion up to the second order terms:
\begin{align*}
& a^i = a_0^i + \frac1{\sqrt{N}} \Y^i+ \cdots \\
& \partial_t a^i = \frac1{\sqrt{N}} \partial_t \Y^i + 2c\la \sqrt{N} \nabla a^i +\cdots\\
& \hskip 5mm = \frac1{\sqrt{N}} \partial_t \Y^i + 2c\la \nabla \Y^i +\cdots\\
& \tilde g_i(\aa) = \tilde g_i(\ab)
+ \frac1{\sqrt{N}} \sum_{j=1}^n \partial_{a^j} (\tilde g_i(\aa))\Big|_{\ab} \cdot \Y^j
\\
&\hskip 13mm+ \frac1{2N} \sum_{j,k=1}^n \partial_{a^j}\partial_{a^k}
 (\tilde g_i(\aa))\Big|_{\ab} \cdot \Y^j \Y^k + \cdots.
\end{align*}
Noting $\nabla ( \tilde g_i(\ab))=0$, we insert these
expansions to the hydrodynamic equation \eqref{eq:5.1}.  Then, multiplying the both 
sides by $\sqrt{N}$, we obtain:
\begin{align*}
\partial_t \Y^i =&  \tfrac12 
\sum_{j=1}^n \partial_{a^j} (\tilde g_i(\aa))\Big|_{\ab} \cdot \Delta \Y^j \\
& + 2c\sqrt{N} \sum_{j=1}^n \partial_{a^j} (\tilde g_i(\aa))\Big|_{\ab} \cdot \nabla \Y^j
-2c \la \sqrt{N}  \nabla \Y^i \\
& + c \sum_{j,k=1}^n \partial_{a^j}\partial_{a^k}
 (\tilde g_i(\aa))\Big|_{\ab} \cdot  \nabla (\Y^j \Y^k) + \text{noise}.
\end{align*}

The second line is a diverging term.  However, this line vanishes, if one can 
choose $\ab$ such that the following `Frame condition' holds:
\begin{itemize}
\item [(FC)] For any $i\in \{1,\ldots,n\}$,
\begin{equation*} 
\la =\la (\ab) := \partial_{a^i} (\tilde g_i( \aa))\Big|_{\ab}, \quad {\rm and} \quad
\partial_{a^j} (\tilde g_i(\aa))\Big|_{\ab} =0 \; \text{ if } i\not= j.
\end{equation*}
\end{itemize}

In this way, we derive a coupled KPZ-Burgers equation
\begin{align}
\label{eq:1.5-b}
\partial_t \Y^i =  \tfrac12  \sum_{j=1}^n \partial_{a^j} (\tilde g_i(\aa))\Big|_{\ab} \cdot \Delta \Y^j \; +\;  c \sum_{j,k=1}^n \partial_{a^j}\partial_{a^k}  (\tilde g_i(\aa))\Big|_{\ab} \cdot  \nabla (\Y^j \Y^k) \; +\;  \text{noise}.
\end{align}

The noise in \eqref{eq:1.5-b} turns out to be the same as in the linear fluctuations \eqref{eq:4.1}. Part of our results will be to characterize more carefully the Frame condition on the density (FC) on $\ab$ in
Proposition \ref{prop:4.2} below.

\begin{rem}
\rm
One can choose $\la$ as
$$
\la = \la (\ab)+ \frac{\la_1}{\sqrt{N}},
$$
where $\la (\ab)$ is defined in the condition (FC) 
and $\la_1 \in \R$ is arbitrary.  Then, we have an
additional drift term $-2c\la_1 \nabla \Y^i$ in the right hand side of \eqref{eq:1.5-b}. 

We comment that if one considers $\la_1=\la_1(i)$, dependent on $1\leq i\leq n$, then formally drifts of $-2c\la_1(i)\nabla \Y^i$ are introduced.  However, unless $\la_1(i)\equiv \la_1$, fixed for $1\leq i\leq n$, we will not be able to derive rigorously \eqref{eq:1.5-b} with the drifts $-2c\la_1(i)\nabla\Y^i$.  The difficulty stems from having to view $\Y^i$ in several different reference frames at once.  One cannot capture it seems the scaling limits of $\Y^{N,i}_t$, while in the reference frame seen in $\Y^{N,j}_t$ with velocity $2c \Big(\lambda (\ab) + \frac{\lambda_1(j)}{\sqrt{N}}\Big)\sqrt{N}$, when $\lambda_1(i)\neq \lambda_1(j)$ (cf. the later proof of tightness in Subsection \ref{tightness} and identification of limits in Subsection \ref{identification}).  
\end{rem}

\section{Model  }
\label{models}

We now detail the weakly asymmetric $n$-species zero-range process.  The configurations of particles with $n$-species on $\T_N$ are denoted by
${\BF \alpha} = (\a^i)_{i=1}^n \in  \Om^n$, where $\Om = \PZ^{\T_N}$ is the
configuration space of particles of a single species, and $\PZ=\{0,1,2,\ldots\}$.  Here, 
$\a^i=(\a^i(x))_{x\in\T_N}$ and $\a^i(x)\in \PZ$
describes the number of particles of the $i$th species at $x\in \T_N$.  The jump rate 
of the $i$th species particles is given by $g_i({\BF\alpha}(x))$ where ${\BF \alpha}(x)=(\a^j(x))_{j=1}^n \in \PZ^n$.
Weak asymmetry is introduced as follows. Once a jump is going to happen, 
the probability of jumping to the right is $p(1)= \frac12+ \frac{c}{N^\ga}$ 
and that to the left is $p(-1)= \frac12- \frac{c}{N^\ga}$, for $c\in \R$ and $\gamma>0$.
The parameter $\gamma$ controls the strength of the weak-asymmetry. It should be taken as $\ga=1$ to show the hydrodynamic limit (\ref{eq:3.1}) and the
linear fluctuation result (\ref{eq:4.1}) in the Euler time scale, while we will fix it as $\ga = \frac12$ to derive the second-order KPZ-Burgers fluctuations (\ref{eq:1.5-b}) in the diffusive time-scale.

Hence we introduce a diffusive time change $t\mapsto N^2 t$ for the microscopic 
system.  Then, the action of the generator $L_N$ of the system on functions $f:\Om^n \to \R$ is defined by
\begin{equation} 
 \label{eq:2.LN}
 \begin{split}
L_N f({\BF \alpha})
&=  N^2 \sum_{x\in \T_N} p(1) \sum_{i=1}^n g_i({\BF \alpha}(x))
 \left\{ f({\BF \alpha}^{x,x+1;i})-f({\BF \alpha})\right\} \\
& + N^2 \sum_{x\in\T_N} p(-1) \sum_{i=1}^n g_i({\BF \alpha} (x))
\left\{ f({\BF \alpha}^{x,x-1;i})-f({\BF \alpha})\right\}, 
\end{split}
\end{equation}
where ${\BF \alpha}^{x,y;i}:=((\alpha^{x,y;i})^j)_{j=1}^n\in \Om^n$ denotes the configuration of particles
after one particle of $i$th species in the configuration ${\BF \alpha}=(\a^i)_{i=1}^n$ jumps from $x$ to $y$.
That is, when $\a^i(x) \ge 1$, we define $({\alpha}^{x,y;i})^i(x) = \a^i(x)-1$, $({\alpha}^{x,y:i})^i(y) = \a^i(y)+1$
and  $({\alpha}^{x,y;i})^j =\a^j$ for $j\not= i$.

For $\Bk=(k^1,\ldots,k^n)\in \PZ^n$ and $1\le j \le n$ we denote $\Bk_{j, \pm}$ the configuration{\footnote{If $k^j =0$, $\Bk_{j, -}$ is not defined.} } 
$$\Bk_{j,\pm} =(k^1,\ldots,k^{j-1},k^j\pm1,k^{j+1},\ldots,k^n).$$
It is natural to assume that the jump rates $g_i$ satisfy the non-degeneracy condition:
\begin{itemize}  
\item [(ND)]
$g_i(\Bk)=0 \Leftrightarrow k^i=0\quad \text{ and }\quad
g_i^* := \inf_{\substack{|\Bk|\ge 0\\ k^i>0}} \; g_i(\Bk)>0,
\ \ {\rm for} \ \ i=1,\ldots,n$.
\end{itemize}
We will also impose the linear growth condition:
\begin{itemize}
\item[(LG)]
$\max_{1\le i, j \le n} \sup_{\Bk\in \PZ^n} |g_i(\Bk_{j,+})-g_i(\Bk)| < \infty$.
\end{itemize}

Such a Lipschitz assumption is not necessary to define the process on $\T_N$ since it is then a countable state Markov chain but useful to construct the process on $\Z$.  See \cite{Andjel} for the single species construction on $\Z$ say; the multi-species zero-range construction on $\Z$ straightforwardly follows the same argument. We will denote the generator of the infinite-volume dynamics by $L$. In the current paper, (LG) is used in particular to verify a spectral gap estimate in Lemma \ref{spec_gap_lemma}. 
It also implies that $g_i$ is at most linear in its growth, which is useful in defining some expectations.

\subsection{Invariant measures and compatibility condition}
\label{invariant_section}

To identify product invariant measures, we will
assume the jump rate  $ (g_i )_{i=1}^n$ satisfies the following
compatibility condition:
\begin{itemize}
\item[(INV)]
$\forall i\ne j,\quad \forall \Bk \in \Z_+^n \text{\; such that $k^i>0$ and $k^j>0$, \; } \quad \cfrac{g_i(\Bk)}{g_i(\Bk_{j,-})}
= \cfrac{g_j(\Bk)}{g_j(\Bk_{i,-})}.$
\end{itemize}
Then we can define the following product 
\begin{equation*}
\Bg!(\Bk)= \prod_{\ell=1}^{|\Bk|} g_{i(\ell)}(\Bk_\ell),
\end{equation*}
along any increasing path $\Bk_0= \boldsymbol{0} \to \cdots \to \Bk_\ell \to \cdots \to
\Bk_{|\Bk|}=\Bk$ connecting $\boldsymbol{0}$ and $\Bk$ in $\PZ^n$ such that $|\Bk_\ell|=\ell$ and $(\Bk_\ell-\Bk_{\ell-1})^j = \de_{i(\ell),j}$ for $1\le j \le n$, $1\le \ell\le |\Bk|$. Note that, because of the condition (INV), $\Bg!(\Bk)$ does not depend on the choice of the increasing path $\{\Bk_\ell\}$, and so is well-defined. See \cite{GS03} or \cite{DSZ}, p.\ 798 (when $n=2$) for this derivation on the torus, which extends to $n\geq 2$.

Under the condition (INV), the invariant measures on $\Omega^n =(\Z_+^n)^{\T_N}$ are product measures over $\T_N$ (but the corresponding marginals are not product) with one site marginal given by
\begin{equation} 
\label{eq:invariant}
\bar\nu_{\boldsymbol\fa}(\Bk) = \frac1{Z_{\boldsymbol\fa}} 
\frac{\boldsymbol\fa^{\Bk}}{\Bg!(\Bk)},
\end{equation}
where $\boldsymbol\fa = (\fa^1,\ldots,\fa^n) \in (0,\infty)^n$ is a parameter, sometimes called the fugacity, and
\begin{equation*}
\boldsymbol\fa^\Bk= [\fa^1]^{k^1}\cdots[\fa^n]^{k^n}, \quad Z_{\boldsymbol\fa}= \sum_{\Bk\in \PZ^n} \frac{\boldsymbol\fa^{\Bk}}{\Bg!(\Bk)}, 
\end{equation*}
In other words, $\bar\nu_{\boldsymbol\fa}$ is a probability measure on $\Om^n$ such that $
\bar\nu_{\boldsymbol\fa}({\BF \alpha} (x)=\Bk)$ is given by the formula \eqref{eq:invariant} for each $x\in \T_N$ and {\footnote{With some abuse of notation we omit the dependance on $N$ in $\bar\nu_{\boldsymbol\fa}$ and use the same notation for the marginal and the product measure. We write ${\bar \nu}_{\boldsymbol\fa}$ also for those on the infinite-volume configuration space $({\mathbb Z}_+^{\mathbb Z})^n$ instead of $\Omega^n \equiv ({\mathbb Z}_+^{\mathbb T_N})^n$.}} 
$\{{\BF \alpha}(x)\}_{x\in\T_N}$ are independent.

Extension of the invariance of these product measures for the infinite-volume process on $\Z$ is immediate as functions depending only on a finite number of sites are dense in the domain of the infinite-volume generator $L$, defined as $L_N$ with $\mathbb T_N$ replaced by $\mathbb Z$.

The measure $\bar\nu_{\boldsymbol\fa}$ is well-defined for $\boldsymbol\fa$
in the proper domain of $Z_{\boldsymbol\fa}$:
$$
{{\rm{Dom}}}_Z := \{\boldsymbol\fa \in (0,\infty)^n\; ;\;  Z_{\boldsymbol\fa}<\infty\}.
$$
To ensure ${{\rm{Dom}}}_Z$ is non-trivial in the sense that it contains a
neighborhood of ${\BF 0}=(0,\ldots,0)$, we assume the following:
\begin{itemize}
\item[(ORI)]
$\fa_* := \liminf_{|\Bk|\to\infty} \Bg!(\Bk)^{\frac1{|\Bk|}} >0$.
\end{itemize}
We now change the parametrization of the invariant measures in terms of density $\BF a$  instead of the fugacity $\boldsymbol\fa$: $\boldsymbol\fa:=\Phi (\BF a)$ is chosen so that
$$\nu_{\BF a} := \bar\nu_{\boldsymbol\fa}$$
has mean $\BF a
= (a^1,\ldots, a^n)$ and $a^i \ge 0$ for $1\leq i\leq n$.  That is,
\begin{equation}  \label{eq:aveg-a}
a^i \equiv a^i (\boldsymbol\fa) := E_{\bar \nu_{\boldsymbol\fa}}[\a^i(0)], \quad i=1,\ldots,n.
\end{equation}
To define $\Phi$ more explicitly let us denote the map $R: \boldsymbol\fa \to \BF a$, taking fugacity to its associated density, with the proper domain
$$
{{\rm{Dom}}}_R := \{\boldsymbol\fa \in {{\rm{Dom}}}_Z\; ; \; a^i(\boldsymbol\fa)<\infty, \, i=1,\ldots, n\}.
$$
By the relation (cf. \eqref{tilde g eq}),
\begin{equation}  \label{eq:aveg}
\tilde g_i(R(\boldsymbol\fa)) = E_{\bar\nu_{\boldsymbol\fa}}[g_i({\BF \alpha}(0))] = \varphi^i,
\quad i=1,\ldots,n,
\end{equation}
we see that $\boldsymbol\fa$ and $\BF a$ are in $1:1$ correspondence on their proper domains.
The function $\Phi: \boldsymbol\ab \to \boldsymbol\fa$ appearing above is the inverse of $R$. 

The fugacity $\boldsymbol\fa$ is sometimes written in terms of  the chemical potential $$\boldsymbol\lambda=(\lambda^1,\ldots,\lambda^n):={\BF \lambda} (\BF \fa)$$ defined by
$\varphi^i=e^{\lambda^i}$ with $\lambda^i \in {\mathbb R}$ for $1\leq i\leq n$. With some abuse of notation we write also ${\BF \lambda} ({\BF a})$ instead of ${\BF \lambda} (\Phi({\BF a}))$.

Let also $\Gamma(\BF a)={\rm cov}(\nu_{\BF a})$ be the covariance matrix of ${\BF \alpha}(0)$ under $\nu_{\BF a}$. 
When the density $\BF a$ belongs to $(0,\infty)^n$, the measure $\nu_{\BF a}$ gives positive measure to all $n$-tuples $(k^1,\ldots, k^n)\in \PZ^n$.  Then, the variance of any linear combination of $(\alpha^i(0))_{i=1}^n$ cannot vanish.  Hence, the matrix $\Gamma({\BF a})$ is positive-definite, symmetric and therefore invertible.

\begin{lem}  
\label{lem:2.1}
For any $\boldsymbol\fa \in \ocirc{{\rm{Dom}}}_R$, that is the interior of the set ${\rm{Dom}}_R$, we have
$$
(\nabla R )( \boldsymbol\fa) = \Gamma(R(\boldsymbol\fa))\; {\rm diag}\left( \left( \frac{1}{\tilde g_i(R(\boldsymbol\fa))}\right)_{1\le i \le n} \right).
$$
Accordingly, for any $\BF a \in \ocirc{{\rm{Dom}}}_\Phi$, we have
$$
(\nabla \Phi) ({\BF a}) = {\rm diag} \left( \Big(\tilde g_i( {\BF a})\Big)_{1\le i\le n}  \right) \; \Gamma({\BF a})^{-1}$$
and
$$(\nabla \lambda) ({\BF a}) = \Gamma({\BF a})^{-1}.$$

\end{lem}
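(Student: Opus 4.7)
The plan is to compute $(\nabla R)(\boldsymbol\fa)$ directly from the explicit power-series formula for $\bar\nu_{\boldsymbol\fa}$, and then to deduce the other two identities by the inverse function theorem and a logarithmic chain rule.

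For the first identity, I would start from the explicit expression
\[
a^i(\boldsymbol\fa) = \frac{1}{Z_{\boldsymbol\fa}}\sum_{\Bk\in \PZ^n} k^i\, \frac{\boldsymbol\fa^{\Bk}}{\Bg!(\Bk)}
\]
and differentiate in $\varphi^j$. Because $\boldsymbol\fa \mapsto Z_{\boldsymbol\fa}$ and $\boldsymbol\fa \mapsto E_{\bar\nu_{\boldsymbol\fa}}[\alpha^i(0)\alpha^j(0)]$ are absolutely convergent power series on $\ocirc{{\rm{Dom}}}_R$, termwise differentiation is legitimate, and the elementary identity $\partial_{\varphi^j}\boldsymbol\fa^{\Bk} = (k^j/\varphi^j)\,\boldsymbol\fa^{\Bk}$ produces a covariance on the right-hand side:
\[
\partial_{\varphi^j} a^i(\boldsymbol\fa) = \frac{1}{\varphi^j}\Bigl(E_{\bar\nu_{\boldsymbol\fa}}[\alpha^i(0)\alpha^j(0)] - a^i a^j\Bigr) = \frac{\Gamma_{ij}(R(\boldsymbol\fa))}{\varphi^j}.
\]
Substituting $\varphi^j = \tilde g_j(R(\boldsymbol\fa))$ via \eqref{eq:aveg} then recasts this entrywise equality as the asserted matrix identity $(\nabla R)(\boldsymbol\fa) = \Gamma(R(\boldsymbol\fa))\,{\rm diag}\bigl((1/\tilde g_i(R(\boldsymbol\fa)))_{1\le i\le n}\bigr)$.

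For the second identity, I would invoke the inverse function theorem. The hypothesis $\BF a \in \ocirc{{\rm{Dom}}}_\Phi \subset (0,\infty)^n$ ensures both that $\Gamma(\BF a)$ is invertible (by the remark preceding the lemma) and that $\tilde g_i(\BF a)>0$ (by (ND)), hence $\nabla R$ is invertible at $\Phi(\BF a)$. Inverting the product $\Gamma(\BF a)\,{\rm diag}(1/\tilde g_i(\BF a))$ and distributing the inverse across the two factors yields $(\nabla \Phi)(\BF a) = {\rm diag}\bigl((\tilde g_i(\BF a))_{1\le i \le n}\bigr)\,\Gamma(\BF a)^{-1}$. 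For the third identity, the chain rule applied to $\lambda^i = \log \varphi^i$ gives $\partial_{a^j}\lambda^i = (\nabla \Phi)_{ij}(\BF a)/\varphi^i = (\nabla \Phi)_{ij}(\BF a)/\tilde g_i(\BF a)$; substituting the formula for $\nabla \Phi$ just derived cancels the diagonal factor $\tilde g_i(\BF a)$ and leaves $\partial_{a^j}\lambda^i = [\Gamma(\BF a)^{-1}]_{ij}$, as required.

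No step is genuinely hard: the only point that requires any care is the justification of termwise differentiation of the series defining $a^i(\boldsymbol\fa)$, which is a standard consequence of absolute convergence of the relevant generating and moment series on the interior of ${\rm{Dom}}_R$; all remaining work is linear algebra.
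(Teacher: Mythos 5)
Your proof is correct, and it is worth noting how it relates to what the paper actually does. The paper's proof of Lemma \ref{lem:2.1} cites \cite{DSZ} and \cite{GS03} for the first two identities and only supplies an in-text derivation of the third, $\nabla\lambda(\BF a)=\Gamma(\BF a)^{-1}$, which it obtains from the moment generating function $M(\boldsymbol\lambda)=E_{\nu_{\BF a}}[e^{\boldsymbol\lambda\cdot(\boldsymbol\alpha(0)-\BF a)}]$ by differentiating the identity $\BF z - \BF a = \nabla_{\boldsymbol\lambda}M/M$ in $\BF z$. Your route is different: you compute $\partial_{\varphi^j}a^i = \Gamma_{ij}/\varphi^j$ directly from the power series defining $a^i(\boldsymbol\fa)$, and then obtain the second and third identities by matrix inversion and the chain rule $\partial_{a^j}\lambda^i = (\partial_{a^j}\varphi^i)/\varphi^i$. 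In fact the paper itself carries out exactly your key computation later, in \eqref{eq:7.first}--\eqref{eq:7.4} of Section \ref{trilinear}, and explicitly points to it (``See also \eqref{eq:7.3} below'') in the proof of Lemma \ref{lem:2.1}. So your argument is not novel relative to what is in the paper, but it has the virtue of being fully self-contained: it establishes all three identities from scratch rather than leaning on external references for two of them, at the cost of restating a calculation the paper defers to Section \ref{trilinear}. The only places that need care are the ones you flagged: termwise differentiation of the fugacity series (absolute convergence on $\ocirc{\rm Dom}_R$ justifies it), and remembering that $(\Gamma D)^{-1}=D^{-1}\Gamma^{-1}$ reverses the order of the factors when passing to $\nabla\Phi$ — which you handled correctly.
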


\begin{proof}
This is shown in \cite{DSZ}, p.799, or \cite{GS03}, (2.20) noting 
$d\lambda_i/d\varphi_i = 1/\varphi_i$ since $\varphi_i=\log\tilde g_i(\BF a)$.

For the reader's convenience, we give a derivation of the last statement as it is short.  
See also \eqref{eq:7.3} below.  Consider the moment generating function
$M({\BF \lambda}) = E_{\nu_{\BF a}}[e^{{\BF \lambda}\cdot({\BF \alpha}(0)-\BF a)}]$,
where the chemical potential ${\BF \lambda}={\BF \lambda}(\z)$ is associated to density $\z$.
Then,
\begin{align*}
\z-{\BF a} &= \nabla_{\BF \lambda} M({\BF \lambda}(\z))/M({\BF \lambda}(\z))\\
&=\left( E_{\nu_{\BF a}}\Big [(\alpha^j (0)-a^j )e^{{\BF \lambda}({\BF z})\cdot ({\BF \alpha} (0)-{\BF a})}\Big]/M({\BF \lambda} ({\BF z} ))\right)_{j=1}^n.
\end{align*} 
Then, for any $j \in \{1, \ldots, n\}$, we have that
\begin{equation*}
\begin{split}
&1= \sum_{r=1}^n \partial_{z^j} \lambda^r({\BF z}) \Big\{E_{\nu_{\z}}[(\alpha^j(0)-a^j)(\alpha^r(0)-\a^r)]- E_{\nu_\z}[(\alpha^j(0)-\a^j)]]E_{\nu_\z}[(\alpha^r(0)-a^r)]\Big\}\\
&= \sum_{r=1}^n \partial_{z^j}\lambda^r(\z) \Gamma(\z)_{jr}=\Big[ \Gamma (\z) \, {\nabla} {\BF \lambda} (\z) \Big]_{jj}.
\end{split}
\end{equation*}
When $k\neq j$, we have
$0 = \sum_{r=1}^n \partial_{z^k} \lambda^r (\z) \,  \Gamma(\z)_{jr}= \Big[ \Gamma (\z) \, \nabla {\BF \lambda} (\z) \Big]_{jk}$.

\end{proof}

\subsection{Markov process}
\label{Markov}
Under the previous conditions on the rates $(g_i)_{i=1}^n$ and jump probability $p$, a process $\{ {\BF \alpha}_t^N\; ; \; t \ge 0\} =\{ (\a_t^{i,N})_{i=1}^n\; ; \; t \ge 0\}$ may be constructed on $\T_N$ or $\Z$ with a family of invariant product measures $\{\nu_\ab: \ab\in {{\rm{Dom}}}_\Phi\}$.  Moreover,
with respect to a fixed $\nu_\ab$, the process can be associated to a Markov semigroup on $L^2(\nu_\ab)$ and Markov generator $L_N$ with a core of $L^2(\nu_\ab)$ functions.   The adjoint $L_N^*$ can be seen to be the generator with respect to reversed jump probability $p^*(\cdot) = p(-\cdot)$.  Moreover, the measure $\nu_\ab$ is invariant with respect to the adjoint process, and is reversible when $p = p^* = s$.   See \cite{S_extremal} for more on these details in the single species case, with the multi-species extension being straightforward.  

In the sequel, a local function is one which depends only on a finite number of occupation variables $({\BF \alpha} (x))_{x \in \Z}$. Define, for local $f$, the function, 
$$\tilde{f}({\BF a}) = E_{\nu_{\BF a}}[f],$$
when the expectation makes sense.

\subsection{Spectral gap}
\label{specgap_section}

The mixing properties of the system will play a role in the analysis.  For $\ell \ge 1$, consider the localized, ergodic process, corresponding to the symmetric nearest-neighbor jump probability $s(\pm 1)=(p(\pm 1)+p(\mp 1))=1/2$ and $s(x)=0$ for $x\neq \pm 1$, on the interval $\Lambda_\ell = \{x\in \Z: |x|\leq \ell\}$ with a fixed number of $\k =(k^1,\ldots, k^n)$ particles of each type, and generator
$$S_{\k,\ell} f(\BF \alpha) \ = \ \frac{1}{2}\sum_{\stackrel{|x-y|=1}{x,y\in \Lambda_\ell}}\sum_{i=1}^ng_i(\BF \alpha (x))\big\{f({\BF \alpha}^{x,y;i}) - f({\BF \alpha})\big\}.$$
For this Markov process, the canonical measure $\nu_{\k,\ell} = \nu_\ab(\cdot| \sum_{x\in \Lambda_\ell}{\BF \alpha} (x) = \k)$ is reversible and invariant.  Let $l_{\k,\ell}$ be the spectral gap, that is, the second smallest eigenvalue of $-S_{\k,\ell}$ (with $0$ being smallest).  Denote $W(\k,\ell) = l^{-1}_{\k,\ell}$ and recall the Poincar\'e's inequality
$${\rm Var}_{\nu_{\k,\ell}}(f) \ \leq W(\k,\ell) D_{\k,\ell}(f)$$
where $D_{\k,\ell}$ is the canonical Dirichlet form
$$D_{\k,\ell}(f) \ = \ \frac{1}{4}\sum_{\stackrel{|x-y|=1}{x,y\in \Lambda_\ell}} \sum_{i=1}^n E_{\nu_{\k,\ell}}\big[g_i({\BF \alpha}(x))\big\{f({\BF \alpha}^{x,y;i})-f({\BF \alpha})\big\}^2\big].
$$

We will suppose the following condition which guarantees sufficient mixing for our purposes:
\begin{itemize}
\item[(SG)] There is a constant $C:=C(\ab,n)$ such that 
$\sup_{\ell\geq 2}E_{\nu_\ab} \left[ W\left(\sum_{x\in \Lambda_\ell}{\BF \alpha}(x), \ell\right)^2\right]  \ \leq \ C\ell^4$.
\end{itemize}
The condition (SG) is one on the rates $(g_i)_{i=1}^n$, useful in the proof the Boltzmann-Gibbs principle in Proposition \ref{gbg_L2}. There is a large class of rates for which this condition holds. 

Consider the following lower bound criterion:
\begin{itemize}
\item[(LB)] Suppose there exists ${\BF m}_0\in \N^n$ and $\epsilon_0>0$ so that
$\inf_{1\leq i\leq n}\inf_{{\BF m}\in \PZ^n} \{g_i({\BF m} + {\BF m}_0) - g_i({\BF m}) \} \geq \epsilon_0$.
\end{itemize}
Under (LB), the domains ${\rm{Dom}}_R = {\rm{Dom}}_\Phi=[0,\infty)^n$.

\begin{lem}
\label{spec_gap_lemma}
Under assumptions (LG) and (LB), we have $W(\k,\ell)\leq C({\BF m}_0, \epsilon_0,  n)\ell^2$ uniformly in $\k\in \PZ^n$.
\end{lem}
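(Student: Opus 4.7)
The plan is to prove the Poincar\'e inequality
\begin{equation*}
\mathrm{Var}_{\nu_{\k,\ell}}(f)\ \leq\ C({\BF m}_0,\epsilon_0,n)\,\ell^2\, D_{\k,\ell}(f)
\end{equation*}
for all $f$, uniformly in $\k\in\PZ^n$, which is equivalent via the variational formula $W(\k,\ell)=\sup_f \mathrm{Var}_{\nu_{\k,\ell}}(f)/D_{\k,\ell}(f)$ to the claimed spectral gap bound. This is the multi-species analogue of the $O(\ell^2)$ spectral gap estimate for single-species symmetric zero-range processes due to Landim--Sethuraman--Varadhan (and refined by Morris and by Caputo): there, the classical driving hypothesis is an increment bound $g(k+1)-g(k)\geq\delta$, whose role here is played by (LB) with ${\BF m}_0$ in place of a single coordinate direction, while (LG) is used to keep all constants uniform in $\k$.

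The core step is a Dirichlet form comparison based on iterating (LB). Writing $\BF m=\BF r+q\,{\BF m}_0$ with $q=\min_j\lfloor m^j/m_0^j\rfloor$ and telescoping, one gets, whenever $m^i\geq 1$,
\begin{equation*}
g_i(\BF m)\ \geq\ g_i(\BF r)+q\,\epsilon_0\ \geq\ \epsilon_0\,\min_{1\leq j\leq n}\lfloor m^j/m_0^j\rfloor.
\end{equation*}
Thus on configurations in which every species is present in bulk at every site, $g_i(\BF m)$ is bounded below by a positive constant multiple of $\min_j m^j$. Combined with the linear upper growth implied by (LG), this lets one dominate $D_{\k,\ell}(f)$ term by term, up to a constant depending only on $({\BF m}_0,\epsilon_0,n)$, by the Dirichlet form of a benchmark process of independent continuous-time symmetric nearest-neighbor random walks on $\Lambda_\ell$. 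The benchmark has spectral gap $\Theta(\ell^{-2})$ by the classical single-particle estimate on an interval together with tensorization across particles and species.

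The main obstacle is the handling of \emph{sparse} configurations, where some coordinate of $\BF m$ lies below the threshold $m_0^j$ and the iteration of (LB) degenerates; in this regime the comparison with independent walkers fails pointwise. I would resolve this by a two-scale block decomposition: partition $\Lambda_\ell$ into sub-intervals of a fixed size $L=L({\BF m}_0)$ and prove an a priori $O(L^2)$ gap within each block by a direct finite-dimensional argument (each block has a polynomially bounded state space, and (ND) and (LG) give a crude but uniform lower bound on the local spectral gap), while transport between blocks is governed by the bulk comparison above after moving particles across block boundaries at a cost controlled by (LG). A subsidiary technical point is that $\nu_{\k,\ell}$ is only product conditional on the species totals, so the comparison is carried out at the level of canonical Dirichlet forms rather than on the grand canonical ensemble; this is routine once the pointwise rate bounds are in hand. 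Throughout, (LG) is precisely what guarantees that all comparison constants end up depending only on $({\BF m}_0,\epsilon_0,n)$ and not on $\k$ or $\ell$.
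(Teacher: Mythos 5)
Your approach is genuinely different from the paper's. The paper conditions on the occupation variables of all species but one, reducing the problem to a single-species zero-range process with a space-inhomogeneous rate $g(\alpha^1;\cdot):=g^1(\alpha^1,a^2(\cdot),\ldots,a^n(\cdot))$; assumption (LB) guarantees that this inhomogeneous rate satisfies the increment bound needed to invoke the known $O(\ell^2)$ gap estimate (Jankowski's extension of Landim--Sethuraman--Varadhan), and the variance of the conditional expectation in the remaining $(n-1)$ species is then handled by induction on $n$. You instead propose a direct Dirichlet-form comparison with independent walkers in the ``bulk'' regime (all species abundant), supplemented by a block decomposition to treat the sparse regime.

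There is, however, a genuine gap in your treatment of the sparse regime. You assert that within a sub-interval of fixed length $L=L({\BF m}_0)$ one obtains ``a direct finite-dimensional argument'' because ``each block has a polynomially bounded state space, and (ND) and (LG) give a crude but uniform lower bound on the local spectral gap.'' Neither claim holds. First, since the process is zero-range, the number of particles in a block of size $L$ is unbounded, so the local state space is not finite in the uniform sense required. Second, (ND) together with (LG) do \emph{not} yield a spectral gap that is uniform in the local particle count: for $g$ with bounded increments but $g_i(\Bk_{j,+})-g_i(\Bk)\to 0$ (e.g. constant-rate zero-range), the canonical gap on a fixed interval is well known to deteriorate as the particle density grows. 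Uniformity in $\k$ is exactly what (LB) is needed for, and your local estimate invokes the conclusion you are trying to establish. The paper's conditioning-plus-induction route avoids this circularity because the inhomogeneous single-species increment bound derived from (LB) transfers the uniformity to a setting where the spectral gap estimate is already available in the literature. A further, smaller, issue is that your bulk bound $g_i({\BF m})\gtrsim \min_j m^j$ is of the wrong shape for a literal comparison with independent walkers (whose effective rate is proportional to $m^i$, not $\min_j m^j$); this could be repaired, but it is another place where the pointwise comparison is more delicate than stated.
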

\begin{proof}
Write the variance
\begin{eqnarray}
\label{first line}
{\rm Var}_{\nu_{\k,\ell}}(f) &=& E_{\nu_{\k,\ell}}[ E_{\nu_{\k,\ell}}[f^2({\BF \alpha})|\alpha^2(\cdot),\ldots, \alpha^n(\cdot)]] - E_{\nu_{\k,\ell}}[ (E_{\nu_{\k,\ell}}[f({\BF \alpha})|\alpha^2(\cdot),\ldots, \alpha^n(\cdot)])^2]\nonumber\\
&&\ \ + {\rm Var}_{\nu_{\k,\ell}}(h(\alpha^2,\ldots, \alpha^n)),
\end{eqnarray}
where $h(\alpha^2,\ldots, \alpha^n) = E_{\nu_{\k,\ell}}[f({\BF \alpha})|\alpha^2(\cdot),\ldots, \alpha^n(\cdot)]$.

With respect to the first line of the above display, by fixing $\alpha^2(x)=a^2(x),\ldots, \alpha^n(x)=a^n(x)$ for $x\in \Lambda_{\ell}$, we may consider the process governing the first type particles $\alpha^1$ with space inhomogeneous rate function
$g(\alpha^1;\cdot):=g^1(\alpha^1, a^2(\cdot),\ldots, a^n(\cdot))$ and symmetric nearest-neighbor jump probability.  The product measure, indexed over $\Lambda_\ell$, with marginal probability at site $x$ with $k$ particles, proportional to 
${\varphi_1^k}/{g(1)\cdots g(k)}$,
 is invariant.  These measures are $\nu_{\k,\ab}$ conditioned on $\alpha^i(x) = a^i(x)$ for $x\in \Lambda_\ell$ and $2\leq i\leq n$.   Because of (LB), we see that
$$\inf_{k\geq 1, x\in \Lambda_{\ell}}\Big\{ g(k+k^1_0, a^2(x), \ldots a^n (x) ) - g(k, a^2(x), \ldots, a^n (x) )\Big\} >\epsilon_0.$$

For such inhomogeneous processes, localized on $\Lambda_\ell$ with $k^1$ particles, it is known
that the associated inverse spectral gap quantity satisfies $W(k^1,\ell)\leq C\ell^2$ with constant $C=C({\BF m}_0, \epsilon_0, n)$.  See Theorem 2.3 and preceding discussion in \cite{Jankowski} for a statement and proof which extends the argument for the `homogeneous' zero-range process in \cite{LSV}; see also \cite{CP} for an argument under an assumption more restrictive than (LB).  Hence, we may bound the first line in (\ref{first line}) by $C\ell^2 D_{\k,\ell}(f)$.

The second line in \eqref{first line} is now similarly bounded by an induction argument.  Finally, we can recover that $W(\k, \ell) \leq nC\ell^2$ in terms of a constant $C=C({\BF m}_0, \epsilon_0, n)$.
\end{proof}

\section{Results}
\label{results}

We first define spaces needed to state the main theorems. We recall that ${\mathcal D} (\T)$ is the space of smooth functions. For a fixed $0<T<\infty$ and $n\geq 1$, denote by $D([0,T], {\mathcal D}^{\prime} (\T)^n)$ and $C([0,T], {\mathcal D}^{\prime} (\T)^n)$ the function spaces of $c\grave{a}dl\grave{a}g$ and continuous maps respectively from $[0,T]$ to ${\mathcal D}^{\prime} (\T)^n$.  The bracket $\langle \cdot,\cdot\rangle$ denotes  the dual pairing between ${\mathcal D}^{\prime} (\T)^n$ and ${\mathcal D} (\T)^n$, but also between other pairs of spaces when the context is clear.  We equip these spaces with the uniform weak topology: a sequence $\{\mc Z_\cdot^N\}_{N\ge 1}$ converges to a path $\mc Z_\cdot$ if for all $H \in \mathcal D (\bb T)^n$, we have
\begin{equation*}
\lim_{N \to \infty} \; \sup_{0\le t \le T} \; \Big\| \mc Z^N_t (H)  -  \mc Z_t (H)  \Big\|=0,
\end{equation*}
where $\|\cdot\|$ is the usual Euclidean norm on $\R^n$.

Throughout the article, the initial configuration ${\BF\alpha}^N_0$ will be distributed according to $\nu_{{\BF a_0}}$ with respect to a density ${\BF a_0}\in [0,\infty)^n$. 

We now discuss two types of fluctuation results.  First, we describe the so-called `linear' fluctuations, before going to the `nonlinear' ones, leading to coupled KPZ-Burgers equations.

\subsection{Linear fluctuations}

Let now $\bar\Y^N_\cdot:=(\bar\Y^{i,N}_\cdot)_{i=1}^n \in D([0,T], {\mathcal D}^{\prime} (\T)^n)$ be the density fluctuation field, acting on functions $H=(H^i)_{i=1}^n \in {\mathcal D} (\T)^n$, given by
$$\bar\Y^N_t(H) \ = \ \left( \cfrac{1}{\sqrt{N}}\sum_{x\in \T_N} H^i\left(\tfrac{x}{N}\right)\big(\a^{i,N}_{t}(x) - a_0^i\big) \right)_{i=1}^n.$$

By the central limit theorem, for each fixed $t\geq 0$, $\bar\Y^{N}_t$ converges in distribution to $\dot W_0= (\dot W_0^i)_{i=1}^n$, the Gaussian distribution taking values in ${\mathcal D}^{\prime} (\T)^n$ corresponding to covariances, for $1\leq i,j\leq n$,
\begin{equation}
\label{standard_covariance}
{\rm Cov}\big( \dot W^i_0(G), \dot W^j_0(H)\big) \ = \Gamma_{ij} ({\BF a}_0) \, \int_\T G(u)H(u) du
\end{equation}
 where we recall that $\Gamma_{ij} ({\BF a}_0) = E_{\nu_{\BF a_0}}\big[(\alpha^i(0)-a_0^i)(\alpha^j(0)-a^j_0)\big]$. 
%
Recall also we denote the standard $\R^n$-space-time white noise $\dot\W_t$ as the ${\mathcal D}^\prime (\T)^n$-valued Gaussian process with covariance function given by
\begin{equation}  \label{eq:3.2-b}
{\rm Cov}\left( \dot\W^i_t(G), \dot\W^j_s(H)\right) \ = \ \de^{ij} \delta(t-s)
\int_\T G(u)H(u)du
\end{equation}
where $\de^{ij}$ is the standard Kronecker symbol, $\delta$ the Dirac mass at $0$ and $H,G \in \mathcal D (\T)^n$ are test functions.
\begin{theorem}
\label{drift_thm}
Suppose $\gamma = 1$.  Starting from initial measure $\nu_{\BF a_0}$, the sequence $\bar\Y^{N}_\cdot$, as $N\uparrow\infty$, converges weakly in the uniform topology on $D([0,T], {\mathcal D}^{\prime} (\T)^n)$ to the unique process $\Y_\cdot$, solving
\begin{equation}
\label{gen_OU_drift}
\partial_t \Y_t \ = \ \tfrac{1}{2}Q({\BF a_0})\Delta\Y_t + 2c Q({\BF a_0})\nabla \Y_t + q({\BF a_0})\nabla \dot\W_t,
\end{equation}
where, initially $\Y_0$ is distributed as $\dot{W}_0$, and
\begin{align*}
Q({\BF a}) & = \big(Q_{ij}({\BF a})\big)_{1\le i,j\le n}
= \left( \partial_{a^j} (\tilde g_i({\BF a})) \right)_{1\le i,j\le n} \ \ {\rm and }\\
q({\BF a}) &= {\rm diag}\left(\big(q_{ii}({\BF a})\big)_{1\le i\le n}\right)
= {\rm diag}\left( \big(\sqrt{\tilde g_i({\BF a})}\big)_{1\le i\le n}\right).
\end{align*}
\end{theorem}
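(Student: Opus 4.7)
The plan is to follow the standard martingale and first-order Boltzmann--Gibbs strategy for linear fluctuations in the Euler time scale ($\gamma=1$), as in Chapter~11 of \cite{KL}, adapted to the $n$-species setting. First, by Dynkin's formula, for each test $H=(H^i)_{i=1}^n \in \mathcal{D}(\T)^n$ and each species $i$,
\begin{equation*}
\bar\Y^{i,N}_t(H^i)=\bar\Y^{i,N}_0(H^i)+\int_0^t (L_N\bar\Y^{i,N}_s)(H^i)\,ds+M^{i,N}_t(H^i),
\end{equation*}
with $M^{i,N}_\cdot(H^i)$ a martingale. A second-order Taylor expansion of $H^i$ at $x/N$, combined with $p(\pm 1)=\tfrac12\pm c/N$ and the diffusive time scaling $N^2$, reduces the generator action to
\begin{equation*}
(L_N\bar\Y^{i,N})(H^i)=\frac{1}{\sqrt N}\sum_{x\in\T_N} g_i({\BF \alpha}(x))\Big[\tfrac12 \Delta H^i(\tfrac{x}{N})+2c\,\nabla H^i(\tfrac{x}{N})\Big]+o(1),
\end{equation*}
with $o(1)$ uniform in the stationary law.

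The second step is a first-order Boltzmann--Gibbs replacement: in the integrated drift one substitutes $g_i({\BF \alpha}_s(x))$ by $\tilde g_i(\ab)+\sum_{j=1}^n \partial_{a^j}\tilde g_i(\ab)\,(\alpha^j_s(x)-a_0^j)$ with error vanishing in $L^2(\P_{\nu_\ab})$. The constant term $\tilde g_i(\ab)$ contributes negligibly because both $\nabla H^i$ and $\Delta H^i$ have zero mean on $\T$, while the linear term closes the drift as
\begin{equation*}
\sum_{j=1}^n Q_{ij}(\ab)\Big[\tfrac12\bar\Y^{j,N}_s(\Delta H^i)+2c\,\bar\Y^{j,N}_s(\nabla H^i)\Big],
\end{equation*}
matching the weak form of \eqref{gen_OU_drift}. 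For the noise, a direct carr\'e-du-champ computation gives
\begin{equation*}
\langle M^{i,N}(H^i)\rangle_t=\int_0^t \frac{1}{N}\sum_{x\in\T_N} g_i({\BF \alpha}_s(x))(\nabla H^i(\tfrac{x}{N}))^2\,ds+o(1),
\end{equation*}
whose stationary mean converges to $t\,\tilde g_i(\ab)\int_\T(\nabla H^i)^2\,du$. The cross brackets $\langle M^{i,N}(H^i),M^{j,N}(H^j)\rangle_t$ for $i\neq j$ vanish identically since distinct species never jump simultaneously, isolating the diagonal noise $q(\ab)\nabla\dot\W_t$.

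Third, tightness of $\{\bar\Y^N_\cdot\}$ in $D([0,T],\mathcal{D}'(\T)^n)$ follows by Mitoma's criterion, reducing to tightness of each real-valued coordinate $\bar\Y^{i,N}_\cdot(H^i)$, which one verifies via Aldous--Kurtz moment bounds from the martingale decomposition together with the $L^2(\nu_\ab)$ integrability of $g_i$ provided by (LG) and (ND). The initial state $\bar\Y^N_0$ converges to the Gaussian $\dot W_0$ with covariance \eqref{standard_covariance} by the multivariate CLT on the i.i.d.\ stationary product measure. Passing to the limit in the martingale problem identifies every subsequential limit with a solution of \eqref{gen_OU_drift}; uniqueness of this linear constant-coefficient SPDE on $\T$ is standard via Fourier expansion, since each mode $\hat\Y_k$ solves an $n$-dimensional Ornstein--Uhlenbeck SDE with drift matrix $-2\pi^2 k^2 Q(\ab)+4\pi i k c\,Q(\ab)$ driven by independent complex Brownian motions.

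The main technical input is the first-order Boltzmann--Gibbs replacement: one must bound the $L^2(\P_{\nu_\ab})$ norm of
\begin{equation*}
\int_0^t \frac{1}{\sqrt N}\sum_{x\in\T_N} F(\tfrac{x}{N})\Big[g_i({\BF \alpha}_s(x))-\tilde g_i(\ab)-\sum_{j=1}^n\partial_{a^j}\tilde g_i(\ab)(\alpha^j_s(x)-a_0^j)\Big]\,ds.
\end{equation*}
By a standard variational argument this reduces to a spectral-gap estimate on boxes of size $\ell$ with $\ell^2/N\to 0$; the input (SG), verified in Lemma \ref{spec_gap_lemma} under (LG) and (LB), supplies the polynomial bound $W(\k,\ell)\lesssim \ell^2$ needed to close the estimate. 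Once this replacement is in hand, everything else is routine bookkeeping.
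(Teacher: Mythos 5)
Your proposal follows essentially the same scheme the paper outlines for Theorem~\ref{drift_thm}: Dynkin decomposition, Taylor expansion of $L_N$ into discrete Laplacian and gradient, first-order Boltzmann--Gibbs replacement (the ``when only $\tilde f(\ab)=0$ is known'' branch of Theorem~\ref{gbg_L2}), identification of the quadratic/cross variations, Mitoma tightness, and Fourier/diagonalization for uniqueness of the limiting linear SPDE. The paper explicitly declines to write this proof out, noting it is a simpler variant of the argument for Theorem~\ref{secondorder_thm}, and your sketch is a correct instance of that variant; your uniqueness step via spatial Fourier modes is a legitimate alternative to the paper's route via $\mathcal Z_t=\Gamma^{-1/2}\Y_t$ and spectral decomposition of $A$.

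One concrete slip: the scaling regime ``$\ell^2/N\to 0$'' you quote for closing the Boltzmann--Gibbs replacement is not the one that works. Plugging $h(x)=\tfrac{1}{\sqrt N}F(\tfrac xN)$ into the second display of Theorem~\ref{gbg_L2} gives an error of order
\begin{equation*}
\frac{T\ell^2}{N^2}\,\|F\|_{L^2}^2 \;+\; \frac{T^2 N}{\ell^2}\,\|F\|_{L^1}^2,
\end{equation*}
which forces $\sqrt{TN}\ll\ell\ll N$ (optimal $\ell\asymp (TN^3)^{1/4}$), and in particular $\ell^2/N\to\infty$, not $0$. Since the two constraints are compatible, the replacement still closes and the conclusion stands, but as written your stated condition is inconsistent with the lower bound needed for the second term to vanish.
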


The stochastic PDE \eqref{gen_OU_drift} is a cross-diffusion system so that its well-posedness
is non-trivial.  We show the well-posed and the invariance of the distribution of $\dot{\W}_0$, though the latter 
is obvious by the limiting procedure.  
Recall 
$$Q({\BF a_0}) \equiv \nabla\Phi({\BF a_0})= {\rm diag}((\tilde g_i ({\BF a}_0))_{1\le i\le n} )\Ga({\BF a}_0)^{-1}$$ from 
Lemma \ref{lem:2.1}. This plays a role of Einstein's relation as it will be clear from Lemma \ref{lem:3.2}. To simplify notation we note ${\rm diag}(\tilde g_i)= {\rm diag}((\tilde g_i ({\BF a}_0))_{1\le i\le n} )$ and $\Gamma=\Ga({\BF a}_0)$.  Then, one can rewrite 
\eqref{gen_OU_drift} into the stochastic PDE for $\mathcal{Z}_t = \Ga^{-\frac12} \Y_t$:
\begin{equation}
\label{eq:3.4}
\partial_t \mathcal{Z}_t \ = \ \tfrac{1}{2}A\Delta\mathcal{Z}_t + 2c A\nabla \mathcal{Z}_t + 
\Ga^{-\frac12} {\rm diag}(\sqrt{\tilde g_i}) \nabla \dot\W_t,
\end{equation}
where $A= \Ga^{-\frac12}{\rm diag}(\tilde g_i)\Ga^{-\frac12}$ is a symmetric matrix.  
However,
$\Ga^{-\frac12}{\rm diag}(\sqrt{\tilde g_i}) \dot\W_t \overset{\text{law}}{=} A^{\frac12}\dot\W_t$,
since the covariance coincides:  for $H\in {\mathcal D} (\T)^n$,
\begin{align*}
\E\Big[\Big\langle \Ga^{-\frac12}{\rm diag}(\sqrt{\tilde g_i})  \W_t\, ,\,   H \Big\rangle^2\Big]
& = t \left\|{\rm diag}(\sqrt{\tilde g_i})\Ga^{-\frac12}H\right\|_{L^2(\T)^n}^2 \\
& = t (AH, H)_{L^2(\T)^n} \\
& = \E[\langle {A}^{1/2} \W_t, H\rangle^2].
\end{align*}
Thus, $\mathcal{Z}_t$ satisfies the following stochastic PDE in law:
\begin{equation}
\label{eq:3.5}
\partial_t \mathcal{Z}_t \ = \ \tfrac{1}{2}A\Delta\mathcal{Z}_t + 2c A\nabla \mathcal{Z}_t + 
A^{\frac12} \nabla \dot\W_t.
\end{equation}
Since $A$ is symmetric, it is diagonalizable as $A e_i = \mu^i e_i$ with an orthonormal basis
$\{e_i\}_{i=1}^n$ of $\R^n$ and eigenvalues $(\mu^i)_{i=1}^n \in \R^n$.  Set $z_t^i(x) :=
\mathcal{Z}_t(x)\cdot e_i$, then we have $n$-independent $\R$-valued stochastic PDEs
\begin{equation}
\label{eq:3.6}
\partial_t z_t^i \ = \ \tfrac{1}{2}\mu^i \Delta z_t^i + 2c \mu^i \nabla z_t^i + 
\sqrt{\mu^i} \, \nabla \dot w_t^i,
\end{equation}
where $\{\dot w_t^i \equiv \dot w_t^i(x) := \dot \W_t(x)\cdot e_i\}_{i=1}^n$
are $n$ independent space-time white noises. Moreover, since ${\mc Y}_0$ is distributed as $\dot{W}_0$, ${\mc Z}_0= \Gamma^{-1/2} {\mc Y}_0$ is distributed like a standard $\R^n$-valued space white-noise, say $\dot\W_0$, or equivalently the $\{z_0^i\}_{i=1}^n$ are $n$ independent standard $\R$-valued space white-noises.  

It is now standard to show the next lemma.  For completeness, we give the proof below.
\begin{lem} 
 \label{lem:3.2}
The process $\{z_t\; ; \; t\ge 0\}$ has the distribution of the spatial white noise $\dot \W_0$ as its invariant measure. Thus, the invariant measure of $\{\Y_t \; ; \; t\ge 0\}$ is the distribution of $\Ga^{\frac12}\dot \W_0$,  which is nothing but $\dot{W}_0$ whose covariance is given by \eqref{standard_covariance}.
\end{lem}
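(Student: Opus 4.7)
My plan is to reduce, via the diagonalization \eqref{eq:3.6}, to $n$ decoupled scalar linear SPDEs, verify invariance of the standard one-dimensional space white noise for each, and then translate back through $\Y_t = \Ga^{1/2}\mathcal{Z}_t$. The first observation is that the components $\{z^i_t\}_{i=1}^n$ solve \eqref{eq:3.6} with independent driving noises $\{\dot w^i\}$ and independent scalar white-noise initial data $z_0^i = \mathcal{Z}_0 \cdot e_i$ (orthonormal projections of a vector white noise are themselves independent scalar white noises), so the components remain mutually independent for all $t \ge 0$, and it suffices to treat a single scalar equation.

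Next, I would fix $i$, drop the superscript, and let $L^* = \tfrac12 \mu \Delta - 2c\mu \nabla$ be the formal $L^2(\T)$-adjoint of $L = \tfrac12\mu\Delta + 2c\mu\nabla$, with semigroup $P_t^* = e^{tL^*}$. Testing the mild (Duhamel) solution against $\varphi \in \mathcal{D}(\T)$ gives
$$\langle z_t,\varphi\rangle = \langle z_0, P_t^*\varphi\rangle - \sqrt{\mu}\int_0^t \langle dw_s, \nabla P_{t-s}^*\varphi\rangle.$$
Because $z_0$ is assumed to be a standard spatial white noise independent of $w$, the two summands are independent centered Gaussians, and It\^o's isometry gives
$$\mathbb{E}[\langle z_t,\varphi\rangle^2] = \|P_t^*\varphi\|_{L^2(\T)}^2 + \mu \int_0^t \|\nabla P_{t-s}^*\varphi\|_{L^2(\T)}^2\, ds.$$

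The core step will be a deterministic energy identity for $P_t^*$. Setting $u(s) = P_s^*\varphi$, periodicity on $\T$ kills the transport term ($\int u\nabla u\, dx = \tfrac12 \int \partial_x (u^2)\, dx = 0$), so that
$$\tfrac{d}{ds}\|u(s)\|^2 = 2\langle u, L^* u\rangle = \mu \langle u, \Delta u\rangle = -\mu \|\nabla u(s)\|^2.$$
Integrating from $0$ to $t$ and changing variables $r = t - s$ yields $\|\varphi\|^2 - \|P_t^*\varphi\|^2 = \mu\int_0^t \|\nabla P_{t-s}^*\varphi\|^2\, ds$. Substituting this back collapses the right-hand side of the It\^o-isometry identity to $\|\varphi\|^2$, so $\mathbb{E}[\langle z_t,\varphi\rangle^2] = \|\varphi\|^2$. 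Polarization upgrades this to $\mathbb{E}[\langle z_t,\varphi\rangle \langle z_t,\psi\rangle] = (\varphi,\psi)_{L^2(\T)}$; since $z_t$ is Gaussian (affine in $(z_0, w)$), this identifies its law as that of a standard spatial white noise, which is the asserted invariance.

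To assemble the vector statement, note that because $\{e_i\}$ is orthonormal, $\mathcal{Z}_t = \sum_i z_t^i e_i$ has covariance $\mathbb{E}[\mathcal{Z}_t^{(j)}(\varphi)\mathcal{Z}_t^{(k)}(\psi)] = \sum_i (e_i)_j (e_i)_k (\varphi,\psi) = \delta^{jk}(\varphi,\psi)$, so $\mathcal{Z}_t \overset{\text{law}}{=} \dot\W_0$. Then $\Y_t = \Ga^{1/2}\mathcal{Z}_t$ inherits covariance $(\Ga^{1/2}(\Ga^{1/2})^\top)_{ij}(\varphi,\psi) = \Ga_{ij}(\ab)(\varphi,\psi)$, which is exactly \eqref{standard_covariance}. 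The only mildly delicate point is the energy identity; on the torus it is essentially free because the transport operator $\nabla$ is antisymmetric while $\Delta$ is self-adjoint and non-positive. Every other ingredient is a routine manipulation of linear SPDEs with additive noise.
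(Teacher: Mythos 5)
Your proof is correct, and it reaches the conclusion by a genuinely different route from the paper's. Both arguments begin from the same reduction (diagonalize $A$, pass to the decoupled scalar SPDEs \eqref{eq:3.6}, use independence of the components), but from that point on they diverge. The paper first sets $c=0$, constructs the exponential process
$M_t = \exp\bigl\{\sqrt{-1}\langle z^{0,i}_t,\varphi_{T-t}\rangle + \tfrac12\int_0^t\langle\mathcal{C}\mathcal{C}^*\varphi_{T-s},\varphi_{T-s}\rangle\,ds\bigr\}$
with $\varphi_s = e^{s\mathcal{A}}\varphi$, shows by It\^o's formula that it is a martingale, and evaluates the characteristic functional of $z_t^{0,i}$ directly; the crucial cancellation is the operator identity $\mathcal{C}\mathcal{C}^* = -2\mathcal{A}$, and the $c\neq 0$ case is then absorbed by the spatial shift $z_t^i(\cdot) = z_t^{0,i}(\cdot + 2c\mu^i t)$ modulo $1$. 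You instead work with the Duhamel representation $\langle z_t,\varphi\rangle = \langle z_0, P_t^*\varphi\rangle - \sqrt{\mu}\int_0^t\langle dw_s,\nabla P_{t-s}^*\varphi\rangle$, apply It\^o isometry, and close the computation with the deterministic energy identity $\tfrac{d}{ds}\|P_s^*\varphi\|^2 = -\mu\|\nabla P_s^*\varphi\|^2$, keeping $c$ throughout because the transport term is antisymmetric on $\T$. These are dual expressions of the same fluctuation--dissipation balance: the paper's operator identity $\mathcal{C}\mathcal{C}^* = -2\mathcal{A}$ is precisely your energy dissipation rate. Your approach is slightly more self-contained (no separate $c=0$ step, no appeal to the Galilean change of frame), and it isolates the fluctuation--dissipation structure as a transparent semigroup computation; the paper's approach has the advantage of producing the full characteristic functional in one stroke rather than just second moments plus polarization plus Gaussianity, which is a minor aesthetic difference since both processes are manifestly Gaussian. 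The final step, transporting the invariance from $z$ to $\mathcal{Z}$ to $\Y$ via orthonormality of $\{e_i\}$ and $\Gamma^{1/2}(\Gamma^{1/2})^\top = \Gamma$, matches the paper.
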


\begin{proof}
First assume $c=0$ and denote the solution of \eqref{eq:3.6} with $c=0$ by $z_t^{0,i}$.
Let $\mathcal{A} = \tfrac{1}{2}\mu^i \Delta$ and $\mathcal{C} = \sqrt{\mu^i} \nabla$
be the operators acting on ${\mathcal D}^{\prime} (\T)$.  For $\fa\in {\mathcal D} (\T)$,
we set $\fa_t = e^{t\mathcal{A}}\fa$.  Let $T>0$. Then, we see that the process $\{M_t\; ; \; t \in [0,T]\}$ defined by
$$
M_t := \exp\left\{ \sqrt{-1} \lan z_t^{0,i},\fa_{T-t}\ran
+ \frac12 \int_0^t \lan \mathcal{C}\mathcal{C}^* \fa_{T-s},\fa_{T-s}\ran ds\right\},
\quad 0\le t \le T,
$$
is a martingale.  Indeed, by applying It\^o's formula, we have
$d \lan z_t^{0,i}, \fa_{T-t}\ran = \lan d w_t^i, \mathcal{C}^* \fa_{T-t}\ran$
and this implies $dM_t = \sqrt{-1}M_t \lan d w_t^i, \mathcal{C}^* \fa_{T-t}\ran$,
cf. Proposition 6.1 in \cite{F91-1}.  Thus, we obtain $\E[M_t]=\E[M_0]$ and, in particular, taking $T=t$, we have
\begin{align*}
\E\left[ e^{\sqrt{-1} \lan z_t^{0,i}, \fa\ran} \right]
e^{\frac12 \int_0^t \lan \mathcal{C}\mathcal{C}^* \fa_{s}, \fa_{s}\ran ds}
= E\left[ e^{\sqrt{-1} \lan z_0^{0,i}, \fa_t\ran} \right] = 
e^{-\frac12 \|\fa_t\|_{L^2(\T)}^2},
\end{align*}
if $z_0^{0,i}$ is distributed as $\dot \W_0^i$.  However, since $\mathcal{C}\mathcal{C}^*
= -\mu^i\De = -2 \mathcal{A}$,
\begin{align*}
\int_0^t \lan \mathcal{C}\mathcal{C}^* \fa_{s}, \fa_{s}\ran ds
= - \int_0^t\lan 2\mathcal{A} e^{2s\mathcal{A}}\fa,\fa\ran ds
= -\|\fa_t\|_{L^2(\T)}^2 + \|\fa\|_{L^2(\T)}^2.
\end{align*}
This shows that
$$
\E\left[ e^{\sqrt{-1} \lan z_t^{0,i}, \fa\ran} \right]
= e^{-\frac12 \|\fa\|_{L^2(\T)}^2}.
$$
In particular, the distribution of $\dot \W_0^i$ is invariant for $\{z_t^{0,i}\; ; \; t \ge 0\}$.
When $c\not=0$, since $z_t^i(x) = z_t^{0,i}(x+2c \mu^i t)$ (mod $1$ in $x$),
we also have the invariance of $\{z_t^i\; ; \; t\ge 0\}$.
\end{proof}

\subsection{Nonlinear fluctuations}

To probe second-order effects, we will like to absorb the drift in \eqref{gen_OU_drift}, and observe the fluctuation field moving with a common velocity $\lambda$.  Such an idea will make sense when all the drifts in \eqref{gen_OU_drift} are the same, that is Frame condition (FC)
holds.

\begin{prop}  \label{prop:4.2}  
The Frame condition (FC)
is equivalent for $\ab$ to be such that
 $$\V_{ij} ({\bf a}_0)=0 \ {\rm \  for \  } i\not= j \quad {\rm  and} \quad  \frac{\tilde g_i ({\BF a}_0) }{\V_{ii} ({\BF a}_0)} =\lambda \ {\rm is \ constant \ in \ }i,$$
 where we recall $\V({\BF a}_0)$ is the covariance matrix $\V ({\BF a_0})= {\rm cov}(\bar\nu_\fa)\big|_{{\BF a}=\ab}$.
\end{prop}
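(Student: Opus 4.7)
The plan is to apply Lemma \ref{lem:2.1} directly: the Frame condition (FC) is precisely the statement that the Jacobian matrix $Q({\BF a}_0) = \nabla \tilde{\Bg}({\BF a}_0)$ equals $\lambda I$, and Lemma \ref{lem:2.1} expresses this Jacobian in terms of $\tilde g_i$ and $\Gamma$.

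First I would identify the map $\tilde{\Bg} = (\tilde g_1,\ldots,\tilde g_n)$ with $\Phi$. Indeed, by the defining relation \eqref{eq:aveg}, we have $\tilde g_i(R(\boldsymbol\fa)) = \varphi^i$, so that $\tilde g_i$ restricted to $\ocirc{{\rm{Dom}}}_\Phi$ equals the $i$-th component of the inverse $\Phi = R^{-1}$ of the fugacity map. Consequently,
\[
Q({\BF a}_0) = \bigl(\partial_{a^j}\tilde g_i({\BF a})\bigr)_{1\le i,j \le n}\Big|_{{\BF a} = {\BF a}_0} = (\nabla \Phi)({\BF a}_0) = {\rm diag}\bigl(\tilde g_i({\BF a}_0)\bigr)\,\Gamma({\BF a}_0)^{-1},
\]
using the second identity in Lemma \ref{lem:2.1}. (One could also argue by noting $\nabla \Phi = (\nabla R)^{-1}$ and applying the first identity in Lemma \ref{lem:2.1}.)

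Next I would rewrite (FC) as the matrix identity $Q({\BF a}_0) = \lambda \, I$, since the condition says every diagonal entry equals the common value $\lambda = \lambda({\BF a}_0)$ and every off-diagonal entry vanishes. Substituting the expression for $Q({\BF a}_0)$ above, this is equivalent to
\[
{\rm diag}\bigl(\tilde g_i({\BF a}_0)\bigr)\,\Gamma({\BF a}_0)^{-1} = \lambda\, I,
\]
which, after inverting and multiplying by ${\rm diag}(\tilde g_i({\BF a}_0))$ on the left (legitimate since each $\tilde g_i({\BF a}_0) > 0$ under (ND) for ${\BF a}_0 \in \ocirc{{\rm Dom}}_\Phi$), is in turn equivalent to
\[
\Gamma({\BF a}_0) = \lambda^{-1}\,{\rm diag}\bigl(\tilde g_i({\BF a}_0)\bigr).
\]
This last equation is nothing but the statement that $\Gamma_{ij}({\BF a}_0) = 0$ for $i \neq j$ and $\tilde g_i({\BF a}_0)/\Gamma_{ii}({\BF a}_0) = \lambda$ is constant in $i$.

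There is no real obstacle here; the content is entirely bookkeeping once Lemma \ref{lem:2.1} is in hand. The one point worth double-checking is that the equivalence goes through on the proper domain of $\Phi$: this requires $\Gamma({\BF a}_0)$ to be invertible and each $\tilde g_i({\BF a}_0)$ to be nonzero, both of which hold for ${\BF a}_0 \in (0,\infty)^n$ by (ND) and the positive-definiteness of $\Gamma$ noted in the discussion preceding Lemma \ref{lem:2.1}.
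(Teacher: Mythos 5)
Your proof is correct and follows essentially the same route as the paper's: identify $(\tilde g_i)_{i}$ with $\Phi$ via \eqref{eq:aveg}, observe that (FC) is the matrix identity $\nabla\Phi(\ab)=\lambda I$, and invoke the formula for $\nabla\Phi$ from Lemma \ref{lem:2.1}. You simply spell out the bookkeeping (and the invertibility/positivity sanity checks) that the paper compresses into one sentence.
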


\begin{proof}
This is immediate, since (FC) is equivalent to ${\nabla}\Phi(\ab) = \la I$
and this is further rewritten as 
${\rm diag} \Big((\tilde g_i (\ab))_{1\le i\le n}\Big)\Gamma^{-1} = \lambda I$
by Lemma \ref{lem:2.1}.
\end{proof}

We remark, as a consequence of Proposition \ref{prop:4.2}, under the Frame condition (FC), that the matrix $\Gamma= {\rm cov}(\bar\nu_\fa)\big|_{a={\BF a_0}}$ is diagonal.

We now consider the fluctuation field moving with a `characteristic' velocity $\lambda = \lambda({\BF a_0})$ according to the Frame condition (FC).
Define $\Y^{N}_\cdot\in D([0,T],{\mathcal D}^{\prime} (\T)^n)$, in terms of its action on $H=(H^i)_{i=1}^n \in {\mathcal D} (\T)^n$, as
$$\Y^{N}_t(H) \ = \left( \ \frac{1}{\sqrt{N}}\sum_{x\in \T_N}  H^i\left(\tfrac{x}{N} - \tfrac{2c \lambda tN^{2}}{N^{\gamma+1}}\right)\big(\a^{i,N}_{t}(x) - a_0^i \big) \ \right)_{i=1}^n.$$
In the following, we suppose that $\gamma=1/2$ in order to get a non-trivial limit of $\Y^{N}_\cdot$.  In this case, the asymmetry is strong enough so that a `quadratic' term is recovered in the limit. Formally, under Frame condition (FC),
the limits of $\{\Y^{N}_\cdot\}_{N\ge 1}$ satisfy a type of (ill-posed) KPZ-Burgers equation:  For $i=1,\ldots, n$,
\begin{align}
\label{gen_OU_sec}
\partial_t \Y^i_t &= \ \frac12 \partial_{a^i}\tilde{g}_i(\ab)\,  \Delta \Y^i_t 
 + c\sum_{j,k=1}^n \partial_{a^j}\partial_{a^k}\tilde{g}_i(\ab)\, \nabla (\Y^j_t\Y^k_t) + \sqrt{\tilde{g}_i(\ab)}\, \nabla\dot\W^i_t.
\end{align}

\begin{rem}\rm
Nevertheless, such an equation, as we show in Section \ref{trilinear}, satisfies a `trilinear' condition, and therefore, as an SPDE, can be understood in terms of the theory developed in \cite{FH17}, and shown to have $\R^n$-valued spatial white noise multiplied by $\Gamma^{1/2}$ as an invariant measure.
\end{rem}

However, from the view of microscopic dynamics, to derive this equation as a scaling limit, as in \cite{GJ} and \cite{GJS}, we have to interpret it in the framework of  `$L^2$-energy' martingale formulation of \eqref{gen_OU_sec}.  
Let $\iota: \T \to [0,\infty)$ be given by $\iota(z) = (1/2) 1_{[-1,1]}(z)$ and, for $\varepsilon>0$, let
$\iota_\varepsilon(z) = (2\varepsilon)^{-1} 1_{|z|\leq \varepsilon}$.
Let $G_\varepsilon: \T \to [0,\infty)$ be a smooth 
approximating function in ${\mathcal D} (\T)$ of $\iota_\varepsilon$ such that 
\begin{equation}
\label{G_approx}
\|G_\varepsilon\|^2_{L^2(\T)} \leq 2\|\iota_\varepsilon\|^2_{L^2(\T)}=\varepsilon^{-1}\ \  {\rm and \ \ }\lim_{\varepsilon\downarrow 0}\varepsilon^{-1/2}\|G_\varepsilon -\iota_\varepsilon\|_{L^2(\T)}  \ = \ 0.
\end{equation}
Such approximating functions can be found by convoluting $\iota_\varepsilon$ with smooth kernels.  
For $x\in \T$, let $\tau_x$ denote the shift so that $(\tau_x \a) (z) = \a(z+x)$ and $(\tau_x G_\varepsilon ) (z) = G_\varepsilon(x+z)$.

With respect to a process $\Y_\cdot \in C([0,T]; {\mathcal D}^{\prime} (\T)^n)$, define $\A_\cdot^\varepsilon\in C([0,T], {\mathcal D}^{\prime} (\T)^{n})$, for $\varepsilon>0$, by its coordinate action on $H\in {\mathcal D} (\T)$:  For each $1\leq i\leq n$ and $t\in [0,T]$, we have
\begin{align*}
\label{A_varep_def}
\A_{t}^{i,\varepsilon}(H) & =\  \sum_{j,k=1}^n\partial_{a^j}\partial_{a^k}\tilde g_i({\BF a_0})\int_0^t \int_{\T} (\nabla H)  (u) \Y^j_s(\tau_{-u} G_\varepsilon)  \Y^k_s(\tau_{-u} G_\varepsilon)  \, du\,  ds.
\end{align*}
We say the process $\Y_\cdot \in D\big([0,T], {\mathcal D}^\prime (\T)^n\big)$ satisfies an {\em $L^2$ energy condition} if,
for $H\in {\mathcal D} (\T)$ and $1\leq i\leq n$, $\big({\mathcal A}_\cdot^{i,\varepsilon}(H)\big)_{\ve>0}$ is a `uniformly $L^2$ Cauchy' sequence, as $\varepsilon\downarrow 0$, with respect to the space of random trajectories equipped with the complete metric $d(x_\cdot, y_\cdot) = \E[\sup_{t\in [0,T]}(x_t - y_t)^2]^{1/2}$, that is,  
\begin{equation}
\label{en.cond}
\lim_{\varepsilon_1,\varepsilon_2\downarrow 0}\, \sup_{1\le i\le n}\, \E\left[ \sup_{t\in [0,T]}\Big(\A_{t}^{i,\varepsilon_1}(H) - \A_t^{i,\varepsilon_2}(H)\Big)^2\right] \ = \ 0,
\end{equation}
and the limit does not depend on the specific smoothing family $\{G_\varepsilon\}$.  The limit process $\big(\A^{i} \big)_{i=1}^n$ belongs to $C([0,T], {\mathcal D}^\prime (\T)^n)$ (see \cite{Walsh1}), and is defined by the uniformly $L^2$ Cauchy limit
$$\forall i\in \{1, \ldots, n\}, \quad \forall H\in{\mc D} (\T), \quad \A^{i}_{t}(H) \ := \ \lim_{\varepsilon\downarrow 0} \A^{i,\varepsilon}_{t}(H).$$

\begin{defn}
\label{frac_energy_defn}
\rm
We will say that $\Y_\cdot$ is a {\em multi-species stationary energy solution} of \eqref{gen_OU_sec} if the following holds.
\begin{itemize}
\item[(i)] For each fixed $t\in [0,T]$, $\Y_t$ is a spatial white noise with $n$-components and covariance function given by \eqref{standard_covariance}.
\item[(ii)] The process $\Y_\cdot$ satisfies the $L^2$-energy condition \eqref{en.cond}: For each $1\leq i\leq n$, there is a process $\A^{i}_\cdot\in C([0,T], {\mathcal D}^{\prime} (\T))$ whose action on $H\in {\mathcal D}^{\prime} (\T)$ is the uniformly $L^2$ Cauchy limit $\A^{i}_\cdot(H)$.  We also impose that $\A_\cdot(H)$ has zero quadratic variation.
 \item[(iii)] There is a
process $\M_\cdot\in C([0,T], {\mathcal D}^{\prime} (\T)^n)$, such that, for $H\in {\mathcal D} (\T)$ and $1\leq i\leq n$, $\M^{i}_\cdot(H)$ is a continuous martingale with respect to the filtration generated by $\Y_\cdot$ with quadratic variation $\langle \M^{i}(H)\rangle_t \ =\  \tilde{g}_i({\BF a_0})t\|\nabla H \|^2_{L^2}$.  Also, $M^i_\cdot$ and $M^j_\cdot$ are independent, when $1\leq i\neq j\leq n$.  Hence, for each $H\in {\mathcal D} (\T)$, by Levy's theorem, $\M(H)$ is an $n$-dimensional Brownian motion.
Moreover,
for $t\in [0,T]$ and $1\leq i\leq n$,
\begin{align*}
\M^i_t(H) &= \Y^i_t(H)-\Y^i_0(H) \\
&\ \ \ \ \ - \frac{1}{2} \int_0^t \partial_{a^i} \tilde{g}_i(\ab)\, \Y^i_s( \Delta H) ds
 - c\A^i_t(H).
\end{align*}
\item[(iv)] 
The time-reversed processes $\widehat\Y_\cdot = \Y_{T-\cdot}$ and $\widehat\A_\cdot = \A_T - \A_{T-\cdot}$ satisfy, for each $1\leq i\leq n$ and $H\in {\mathcal D}^{\prime} (\T)$, that
\begin{align*}
\widehat{\M}^i_t(H) &= 
\widehat\Y^i_t(H) - \widehat\Y_0(H)\\
&\ \ \ \ \  -  \frac{1}{2} \int_0^t \partial_{a^i} \tilde{g}_i(\ab)\, \widehat \Y^i_s(\Delta H) ds + c\widehat\A^i_t(H)
\end{align*}
is a continuous martingale in the filtration generated by $\widehat \Y_\cdot$, and $\widehat{\M}_0(H)=0$.  Also, $\widehat{\M}_\cdot(H)$ has the same quadratic and cross variation properties as $\M_\cdot$ in item (iii).
 \end{itemize}
\end{defn}

\begin{rem}\rm
\label{GPrem}
We remark when the drift coefficients $c\partial_{a^k}\partial_{a^j}\tilde{g}_i(\ab)$ vanish for all $1\leq i,j\leq n$, for instance when $c=0$, then \eqref{gen_OU_sec} represents a well-posed stochastic heat equation which has a unique solution. More generally, by Remark 4.13 in \cite{GPnew}, there is a unique process $\Y_\cdot$ satisfying the `multi-species energy solution' criteria in Definition \ref{frac_energy_defn}.
\end{rem}

We now come to the main result of the article.

\begin{theorem}
\label{secondorder_thm}
Suppose $\gamma = 1/2$ and the Frame condition (FC)
holds.  Starting from initial measure $\nu_{\BF a_0}$, the sequence $\{\Y^{N}_\cdot\; ; \;  N\geq 1\}$ converges
with respect to the
uniform topology on $D([0,T], {\mathcal D}^{\prime} (\T)^n)$, 
to the unique multi-species energy solution $\Y_\cdot$ of \eqref{gen_OU_sec}.

\end{theorem}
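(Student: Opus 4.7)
\medskip

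\noindent\textbf{Proof plan.} I would follow the standard three-step route for stationary fluctuation convergence: (i) write a Dynkin martingale decomposition of $\Y^{N,i}_\cdot(H)$, (ii) prove tightness and characterize any subsequential limit as a multi-species stationary energy solution of \eqref{gen_OU_sec}, and (iii) invoke the uniqueness theorem of \cite{GPnew} via Remark \ref{GPrem} to conclude convergence of the full sequence. The decisive technical input is a second-order multi-component Boltzmann--Gibbs principle (Proposition \ref{gbg_L2}), which rests on the spectral gap bound of Lemma \ref{spec_gap_lemma}.

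First, I would apply Dynkin's formula together with the time derivative coming from the moving frame to obtain
\begin{align*}
\Y^{N,i}_t(H) = \Y^{N,i}_0(H) &+ \int_0^t \tfrac{1}{2\sqrt N}\sum_{x\in\T_N} (\Delta_N H^i)(u_s(x))\, g_i(\BF\alpha^N_s(x))\, ds\\
&\hskip -10mm + \int_0^t \sum_{x\in\T_N} (\nabla_N H^i)(u_s(x))\Bigl[ 2c\, g_i(\BF\alpha^N_s(x)) - \lambda (\alpha^{i,N}_s(x) - a_0^i)\Bigr] ds + M^{N,i}_t(H),
\end{align*}
where $u_s(x) = x/N - 2c\lambda\sqrt{N}\, s$ and $\Delta_N, \nabla_N$ are the discrete Laplacian and gradient. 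The second integral carries the $\sqrt N$-divergent drift. To neutralize it I would apply the first-order Boltzmann--Gibbs principle to replace $g_i(\BF\alpha^N_s(x))$ by $\tilde g_i(\BF a_0) + \sum_{j} \partial_{a^j}\tilde g_i(\BF a_0)(\alpha^{j,N}_s(x) - a_0^j)$ plus a quadratic correction; the constant term integrates to zero on the torus, while the Frame condition $\partial_{a^j}\tilde g_i(\BF a_0) = \lambda\delta_{ij}$ makes the linear piece exactly cancel the $-\lambda(\alpha^{i,N}_s - a_0^i)$ term arising from the moving frame. What survives is the nonlinear expression
\[
c\sum_{j,l=1}^n \partial_{a^j}\partial_{a^l}\tilde g_i(\BF a_0)\int_0^t \sum_{x\in\T_N} (\nabla_N H^i)(u_s(x))\bigl(\alpha^{j,N}_s(x) - a_0^j\bigr)\bigl(\alpha^{l,N}_s(x) - a_0^l\bigr)\, ds,
\]
to be handled by the second-order principle below.

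Next, I would establish tightness in $D([0,T], \mathcal D'(\T)^n)$ coordinate-wise. The initial field $\Y^{N,i}_0$ is tight by the CLT under $\nu_{\BF a_0}$. The martingale $M^{N,i}(H)$ has predictable quadratic variation $\int_0^t \tfrac{1}{N}\sum_x g_i(\BF\alpha^N_s(x))\bigl(\nabla_N H^i(u_s(x))\bigr)^2 ds$, which, by stationarity, converges in $L^1$ to $\tilde g_i(\BF a_0)\, t\, \|\nabla H^i\|_{L^2}^2$; jumps are $O(N^{-1/2})$, so $M^{N,i}(H)$ converges to a Brownian motion of that variance, and different species are uncorrelated because only one species jumps at a time (cf.\ \eqref{eq:4.cross}). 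The Laplacian drift is controlled in $L^2$ using stationarity. For the surviving quadratic drift I would apply the second-order multi-species Boltzmann--Gibbs principle (Proposition \ref{gbg_L2}), which, via a Kipnis--Varadhan variance bound combined with the canonical spectral gap (SG), provides the replacement
\[
\sum_{x\in\T_N} F(x/N)\bigl(\alpha^{j,N}_s(x)-a_0^j\bigr)\bigl(\alpha^{l,N}_s(x)-a_0^l\bigr) \;\approx\; N\int_\T F(u)\, \Y^{j,N}_s(\tau_{-u}G_\ve)\,\Y^{l,N}_s(\tau_{-u}G_\ve)\, du
\]
with an error vanishing in $L^2$ uniformly in $N$ as $\ve\downarrow 0$. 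This simultaneously gives tightness of the quadratic term and the uniformly-$L^2$-Cauchy property of $\A^{i,\ve}_\cdot$ required in Definition \ref{frac_energy_defn}.

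Finally, for identification, any subsequential limit $\Y_\cdot$ inherits: item (i) of Definition \ref{frac_energy_defn}, stationary $n$-component white noise with covariance \eqref{standard_covariance}, from stationarity of $\{\Y^N_t\}$ and the CLT; item (ii), the $L^2$ energy condition from uniform Boltzmann--Gibbs bounds in $N$ and $\ve$; item (iii), the martingale identity by passing to the limit in the Dynkin decomposition and using independence of the component martingales; and item (iv), the reversed-time identity by running the same program for the adjoint generator $L_N^*$, which corresponds to replacing $p$ by $p^*$ (hence $c$ by $-c$) while preserving both (FC) and the quadratic structure. Uniqueness of multi-species stationary energy solutions (Remark \ref{GPrem}) then pins down the limit, upgrading subsequential convergence to full convergence. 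I expect the hard step to be the second-order multi-component Boltzmann--Gibbs estimate: one must simultaneously control all cross-species bilinears $(\alpha^j - a_0^j)(\alpha^l - a_0^l)$, use the canonical spectral gap uniformly in the vector-valued component counts (where Lemma \ref{spec_gap_lemma} supplies the needed $O(\ell^2)$ bound), and verify that the mesoscopic replacement indeed produces the factorized smoothing $\Y^j_s(\tau_{-u}G_\ve)\Y^l_s(\tau_{-u}G_\ve)$ rather than some non-tensorized canonical-measure average.
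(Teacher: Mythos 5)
Your proposal follows essentially the same route as the paper: the Dynkin decomposition with the moving-frame time derivative, exact cancellation of the $\sqrt{N}$-divergent drift via the Frame condition, the second-order multi-component Boltzmann--Gibbs principle (Theorem~\ref{gbg_L2}) built on the spectral-gap bound to both establish tightness of the nonlinear drift and verify the uniform-$L^2$-Cauchy property of $\A^{i,\ve}$, time-reversal for item (iv), and uniqueness from \cite{GPnew}. One small clarification: the cancellation of the divergent linear drift against the moving-frame term is a purely algebraic consequence of subtracting the first-order Taylor expansion of $g_i$ from the gradient term (the centering is free on the torus), and the Boltzmann--Gibbs principle only enters to control the residual $V_i=g_i-\tilde g_i(\ab)-\sum_j\partial_{a^j}\tilde g_i(\ab)(\alpha^j-a_0^j)$, but your account captures this correctly in substance.
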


\begin{rem}
\label{KPZ_rmk}
\rm
Although we have worked on the tori $\T_N$, there are straightforward extensions of the theorems to ${\mathbb Z}$ and to an infinite volume limit.  In particular, Theorem \ref{drift_thm} will hold on ${\mathbb R}$ instead of $\T$.  Also, Theorem \ref{secondorder_thm} will extend to a limit point characterization--that is, the sequence $\{\Y^N_\cdot\}$ is tight with respect to the uniform topology, and any limit point satisfies the multi-species energy solution martingale problem on ${\mathbb R}$.  However the difficulty now is that it is not yet shown that there is a unique solution to this infinite volume martingale problem.  If there were such a unique solution, then Theorem \ref{secondorder_thm} would extend straightforwardly to ${\mathbb R}$.
\end{rem}

\begin{rem}
\rm
In Section \ref{example_section}, we consider multicolored systems where $g_i(\k) = g(|\k|)[k_i/|\k|]$ for $1\leq i\leq n$ in terms of a rate $g$.  In such systems, we show there are specifications when the Frame condition (FC)
holds with respect to a density $\ab$ and the coefficients $(c/2)\partial_{a^j,a^k}\tilde{g}_i(\ab)$ do not all vanish.  In this setting, the SPDE \eqref{gen_OU_sec} simplifies to \eqref{eq:5.2_reduced}, and we discuss that a unique process, even on $\R$, can be associated to it.
\end{rem}

\section{Proofs}
\label{proofs}

The arguments for Theorems \ref{drift_thm} and \ref{secondorder_thm} adapt the `hydrodynamics' scheme of \cite{GJS}, with some new features, to the multi-species context, developing the stochastic differential of ${\bar \Y}^{N}$ and $\Y^{N}$ into drift and martingale terms, before analyzing their limits.  Since the arguments of the two theorems are similar, to simplify the discussion, we only prove in detail Theorem \ref{secondorder_thm}, the most involved.

In Subsection \ref{Assoc_mart_section}, various generator actions are computed in general.  Then, in Subsection \ref{BG_statement}, a general `Boltzmann-Gibbs' principle is stated which will help close equations.  In Subsection \ref{tightness}, tightness of the processes in Theorem \ref{secondorder_thm} is shown.  In Subsection \ref{identification}, we identify several features of the limit points.   
Finally, in Subsection \ref{main_proof_subsec}, we finish the proof of Theorem \ref{secondorder_thm}.

\subsection{Stochastic differentials}
\label{Assoc_mart_section}

For $H\in {\mathcal D} (\T)$, $x\in \T_N$, and $N\geq 1$, define the scaled operators:
\begin{eqnarray*}
\Delta_{x}^{N}H &=&{N^2} \left\{ H\left(\frac{x+1}{N}\right) + H\left(\frac{x-1}{N}\right) - 2H\left(\frac{x}{N}\right)\right\},\\ 
\nabla_{x}^{N}H &=& \frac{N}{2}\left\{H\left(\frac{x+1}{N}\right) - H\left(\frac{x- 1}{N}\right)\right\}.
\end{eqnarray*}
Define, for $\gamma=1/2$ and $s\geq 0$,
\begin{equation}
\label{shifted_H}
H_{s}(\cdot)   \ = \  H\Big( \cdot - \frac{1}{N}\Big\lfloor\frac{2c\lambda sN^2}{\sqrt{N}}\Big\rfloor\Big) \quad  {\rm and} \quad \HW_{s}(\cdot)  \ = \  H\Big( \cdot - \frac{1}{N} \frac{2c\lambda sN^2}{\sqrt{N}}\Big), 
\end{equation}
functions seen in frames along $N^{-1}\T_N$ and $\T$ respectively which will be useful, where $\lfloor S \rfloor$ denotes the integer part of $S\in \R$. Now, for $H=(H^i)_{i=1}^n \in {\mathcal D}(\T)^n$ and $s\in [0,T]$,  let 
$$F(s, {\BF \alpha}^N_s;H) \equiv \Big( F^i (s, {\BF \alpha}_s^N, H) \Big)_{i=1}^n= \Y^{N}_s(H)$$
and write, for $1\leq i\leq n$,
\begin{eqnarray*}
L_N F^i (s, {\BF \alpha}^N_s; H) & = & \frac{1}{2\sqrt{N}}\sum_{x\in \T_N}g_i({\BF \alpha}^{N}_s(x)) \Delta^{N}_{x}\HW^i_{s} + \ 2c \sum_{x\in \T_N} g_i({\BF \alpha}^N_s(x))\nabla^{N}_x\HW^i_{s}.\end{eqnarray*}
Also,
\begin{eqnarray*}
\partial_s F^i(s,{\BF \alpha}^{N}_s; H) & = &{-2c\lambda}\sum_{x\in \T_N} \nabla \HW^i_{s}\left(\frac{x}{N}\right)\left(\alpha^{i,N}_s(x)-a_0^i\right),
\end{eqnarray*}
where $\partial_s$ acts only on the first coordinate of $F^i$. By Dynkin's formula, the process $\{ {\mathcal M}_t^N (H) \; ;\; t\in [0,T]\}$ defined by
\begin{eqnarray*}
\M^{N}_t (H)  &:=& \ F(t, {\BF \alpha}^N_t; H) - F(0, {\BF \alpha}^N_0; H)  - \int_0^t \Big\{ {\partial_s}F(s,{\BF \alpha}^N_s; H) + L_N F(s, {\BF \alpha}^N_s; H)\Big\} ds
\end{eqnarray*}
is a martingale.  
We write
\begin{equation}
\label{mart_decomposition}
\M^{N}_t(H)
\ = \ \Y^{N}_t(H) - \Y^{N}_0(H) - \I^{N}_t(H) -\B^{N}_t(H) -\K^{N}_t(H).
\end{equation}
Here, the $i$th components of the fields above, noting the Frame condition (FC) in Subsection \ref{subsubsec:NFKPZ},
are the following:
\begin{eqnarray*}
&&\mathcal{I}^{i,N}_t(H)  = 
 \int_0^t \frac{1}{2\sqrt{N}}\sum_{x\in \T_N} \big(g_i({\BF \alpha}^N_s(x))-\tilde{g}_i({\BF a_0})\big)\Delta^{N}_{x}H^i_{s} ds,
\\
&&\mathcal{B}^{i,N}_t(H)  =  {2c} \int_0^t \sum_{x\in \T_N} \Big(g_i({\BF \alpha}^N_s(x))-\tilde{g}_i({\BF a_0})\\
&&\ \ \ \ \ \ \ \ \ \ \ \ \ \ \ \ \ \ \ \ \ \ \ \ \ \ \ \ \ \ \ \ \ \ \ \ \ \ \ \ \ \ - \sum_{j=1}^n\partial_{a^j}\tilde g_i(\ab) (\alpha^{j,N}_s(x)-a_0^j)\Big)\nabla^{N}_xH^i_{s} ds,\\
&&\K^{i,N}_t(H) = \int_0^t\Big[\frac{1}{2\sqrt{N}}\sum_{x\in \T_N}\kappa^{N,1}_{x}(H^i,s)\big(g_i({\BF \alpha}^{N}_s(x))-\tilde{g}_i({\BF a_0})\big)\\ 
&& \ + {2c}\sum_{x\in \T_N}\kappa^{N,2}_x(H^i,s)\Big(g({\BF \alpha}^{N}_s(x))-\tilde{g}_i({\BF a_0}) - \sum_{j=1}^n \partial_{a^j}\tilde g_i({\BF a_0})(\alpha^{j,N}_s(x)-a_0^j)\Big)\Big]ds.
\end{eqnarray*}
We use the same notation ${\mathcal M}_t^N (H), {\mathcal Y}_t^N (H), {\mathcal I}_t^N (H), \mathcal B_t^N (H)$ and $\mathcal K_t^N (H)$for $H \in {\mc D} (\mathbb T)$ by replacing $H^i$ with $H$ in these formulas. Centering constants were introduced in $\I^{i,N}_t$ and $\B^{i,N}_t$ for free, as $\sum_x\Delta^{N}_{x}H^i_{s} = \sum_x\nabla^{N}_xH^i_{s}=0$.   By Taylor expansion, uniformly in $x$, we have 
\begin{eqnarray*}
&&\kappa^{N,1}_{x}(H^i,s) \ = \ \Delta^{N}_{x} \big(\HW^i_{s} - H^i_{s}\big) \ = \ O(N^{-1}),\\
&&\kappa^{N,2}_x(H^i,s) \ = \ \nabla^{N}_{x} \big(\HW^i_{s} - H^i_{s}\big) \ = O( N^{-1}).
\end{eqnarray*}
Also, the process
$\{ (\M^{i,N}_t(H))^2 - \langle \M^{i,N}(H)\rangle_t\; ; \; t\in [0,T]\}$ is a martingale with quadratic variation
\begin{eqnarray*}
\langle \M^{i,N}(H) \rangle_t & = & \int_0^t  \frac{1}{N}\sum_{x\in \T_N} \sum_{\ve =\pm 1} p (\ve) g_i({\BF \alpha}^{N}_s(x))  \Big[ N \big\{ \HW_s^i (\tfrac{x +\ve}{N}) - \HW_s^i(\tfrac{x}{N} )\big\}  \Big]^2\, ds
\end{eqnarray*}
where we recall that
\begin{equation*}
p(\ve) = \cfrac{1}{2} + \cfrac{c \ve}{\sqrt N}. 
\end{equation*}

In addition, we have the bound, following an expansion of an exponential martingale as in Section 3.1 of \cite{GJS}:
\begin{equation}
\label{quad_var_bound}
\quad \quad \E_{\nu_{\BF a_0}}\big[\big(\M^{i,N}_t(H)-\M^{i,N}_s(H)\big)^4\big]  \ \leq \ C(c, {\BF a_0}, g, H,n)\Big\{
|t-s|^2 + \frac{1}{N^{3/2}}|t-s|\Big\}. 
\end{equation}

\subsection{Boltzmann-Gibbs principle} \label{BG_statement}

Define, for $\zeta\in \Omega={\mathbb Z}_+^{\mathbb T_N}$ (resp. $\zeta \in \Omega^n$)  and $\ell\geq 1$, that
$$\zeta^{(\ell)}(x) \ := \ \frac{1}{2\ell+1}\sum_{y\in \Lambda_\ell}\zeta(x+y)$$
where we recall that $\Lambda_{\ell} = \{x\in\T_N: |x|\leq \ell\}$.   
For $h:\T_N\rightarrow\R$, $T>0$ and $c\in {\mathbb R}$, define $\bar h_{c, s}:[0,T]\times \T_N\rightarrow \R$ by $\bar h_{c,s}(x) = \bar h(x - \lfloor cs\rfloor)$. The density $\ab$ in the following is generic, and need not satisfy the Frame condition (FC).

\begin{theorem}
 \label{gbg_L2}
Let $\ell_0 \ge 1$, $f:\Omega^n\rightarrow \R$ be a local $L^5(\nu_{\ab})$ function supported on sites in $\Lambda_{\ell_0}$ such that $\tilde{f}(\ab)=0$ and $\nabla \tilde{f} (\ab)=0$.
There exists a constant $C=C(\ab,\ell_0,n)$
such that, for $T>0$, $\ell\geq \ell_0$ and $h:\T_N\rightarrow \T$,
\begin{align*}
& \E_{\nu_\ab}\Big[ \sup_{0\leq t\leq T}\Big( \int_0^t  \sum_{x\in\T_N} \Big(f(\tau_x{\BF \alpha}^{N}_s)\\
 &\ \ \ \ \ \ \ \ \ \ - \frac{1}{2}\sum_{j,k=1}^n \partial_{a^j}\partial_{a^k}\tilde{f}(\ab)\Big\{\Big(\big(\alpha^{j,N}_s\big)^{(\ell)}(x)-a_0^j\Big)\Big(\big(\alpha^{k,N}_s\big)^{(\ell)}(x)-a_0^j\Big)\\
 &\ \ \ \ \ \ \ \ \ \ \ \ \ \ \ \ \ \ \ \ \ \ \ \ \ \ \ \ \ \ \ \ \ \ \ \ \ \ \ \ \ \ \ \ \ \ \ \ -\frac{\Gamma_{jk} (\ab)}{2\ell+1} \Big\}
 \bar h_{c,s}(x)ds\Big)^2 \Big]\\
&\ \ \ \ \ \ \ 
    \leq \ C\|f\|^2_{L^5(\nu_{\ab})}\bigg(\frac{T\ell }{N}\Big(\frac{1}{N}\sum_{x\in \T_N}h^2(x)\Big) + \frac{T^2N^2}{\ell^{3}}\Big(\frac{1}{N}\sum_{x\in{\T_N}}|h(x)|\Big)^2\bigg).
\end{align*}

On the other hand, when only $\tilde{f}(\ab)=0$ is known,
\begin{align*}
& \E_{\nu_\ab}\Big[ \sup_{0\leq t\leq T}\Big( \int_0^t  \sum_{x\in{\T_N}} \Big( f(\tau_x {\BF \alpha}^{N}_s) 
  - \sum_{i=1}^n \partial_{a^i}\tilde{f}(\ab)\Big\{\big(\alpha^{i,N}_s\big)^{(\ell)}(x) - a_0^i\Big\} 
\bar h_{c,s}(x)ds\Big)^2\Big]\\
&\ \ \ \ \ \ \ \ 
    \leq \ C\|f\|^2_{L^5(\nu_{\a})}\bigg(\frac{T \ell^2}{N}\Big(\frac{1}{N}\sum_{x\in \T_N}h^2(x)\Big) + \frac{T^2N^2}{\ell^{2}} \Big(\frac{1}{N} \sum_{x\in{\T_N}}|h(x)|\Big)^2\bigg).
\end{align*}
\end{theorem}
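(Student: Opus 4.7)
The plan is to adapt to the multi-species setting the scheme of Gon\c{c}alves-Jara \cite{GJ} for the scalar second-order Boltzmann-Gibbs principle. Fix a block scale $\ell \ge \ell_0$ and, for each $x\in\T_N$, let $\Bk_x = \sum_{y\in\Lambda_\ell}{\BF \alpha}(x+y)\in \PZ^n$ and define the block-conditional expectation
$$f_\ell(\Bk_x) := E_{\nu_\ab}\big[\,f(\tau_x{\BF \alpha})\,\big|\,\Bk_x\,\big],$$
which by the product structure of $\nu_\ab$ depends on ${\BF \alpha}$ only through the local density ${\BF \alpha}^{(\ell)}(x)=\Bk_x/(2\ell+1)$. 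Decompose the centered integrand appearing in the theorem as $A_x + B_x$, with $A_x = f(\tau_x{\BF \alpha}) - f_\ell(\Bk_x)$ (block-conditional mean zero) and $B_x = f_\ell(\Bk_x) - \Phi_x$, where $\Phi_x$ denotes the polynomial of the local density that is subtracted in the statement. By subadditivity of $(\int\cdot\,ds)^2$, the two pieces can be controlled separately.

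For the $B_x$ piece, a quantitative multi-species equivalence of ensembles---Taylor expansion of $f_\ell$ in the local density about $\ab$ combined with CLT/Edgeworth-type moment bounds for $\Bk_x/(2\ell+1)$ under the canonical measure $\nu_{\Bk_x,\ell}$---yields pointwise estimates $|B_x|\lesssim \|f\|_{L^5(\nu_\ab)}/\ell^{3/2}$ in the first (stronger) statement, and $|B_x|\lesssim \|f\|_{L^5(\nu_\ab)}/\ell$ in the second. The $\Gamma_{jk}(\ab)/(2\ell+1)$ correction in the former is exactly the canonical-vs-grand-canonical variance difference of the quadratic Taylor term. The $L^5$ hypothesis is what makes the Edgeworth expansion quantitative at the required order. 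A direct $L^1$ bound of the time integral against $|h|$, squared, together with a Doob-type maximal inequality, then produces the $T^2N^2/\ell^{3}$ (respectively $T^2N^2/\ell^{2}$) contribution in $(N^{-1}\sum|h|)^2$.

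For the $A_x$ piece, which is block-conditional mean zero, the Kipnis-Varadhan inequality for stationary Markov processes bounds the left-hand side by $CT$ times the $H^{-1}$-norm of $\sum_x A_x\bar h_{c,s}(x)$ with respect to the symmetric part of $L_N$, uniformly in $s$ (the moving-frame shift in $\bar h_{c,s}$ being absorbed by the translation invariance of $\nu_\ab$ on $\T_N$). A one-scale $H^{-1}$ estimate via the variational formula, Cauchy-Schwarz, and the block Poincar\'e inequality from (SG) (or, under (LB), from Lemma \ref{spec_gap_lemma}, giving $W(\Bk_x,\ell)\lesssim \ell^2$) delivers the $T\ell^2/N$ contribution in $N^{-1}\sum h^2$ for the weaker statement. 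For the stronger statement, the additional vanishing $\nabla\tilde f(\ab)=0$ provides extra cancellation: a careful refinement of the $H^{-1}$ bound (either through a dyadic multi-scale iteration across scales $2^k$ for $k\le \lfloor\log_2\ell\rfloor$ in the spirit of \cite{GJ}, or through a local polynomial orthogonal decomposition of $A_x$ from which the first-order component is removed by $\nabla\tilde f(\ab)=0$) absorbs one further factor of $\ell$ from the spectral-gap cost, and yields the sharper $T\ell/N$ contribution.

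The principal obstacle is the multi-scale $H^{-1}$ estimate for the stronger case: the scalar dyadic bootstrap of \cite{GJ} must be lifted to vector-valued canonical measures $\nu_{\Bk,\ell}$ indexed by $\Bk\in \PZ^n$, tracking species-dependent spectral gap bounds and a polynomial orthogonal decomposition in which the first-order chaos is eliminated by $\nabla\tilde f(\ab)=0$. The other ingredients---the Kipnis-Varadhan inequality, the equivalence of ensembles, the block Poincar\'e inequality, and the handling of the moving frame $\bar h_{c,s}$---are routine multi-species extensions of the scalar machinery.
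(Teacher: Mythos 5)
Your high-level decomposition $f = A_x + B_x$, with $A_x$ (block-conditional mean zero) handled by a Kipnis--Varadhan / $H^{-1}$ estimate and $B_x$ handled by an equivalence-of-ensembles expansion, is the correct strategy and is essentially what the paper does. However, there is a concrete error in the way you propose to estimate the $A_x$ piece, and it is serious enough to break the proof of the weaker statement.

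You assert that a \emph{one-scale} $H^{-1}$ estimate at scale $\ell$ (variational formula, Cauchy--Schwarz, block Poincar\'e from (SG)) already delivers $T\ell^2/N$ (in the normalization $N^{-1}\sum_x h^2(x)$) for the weaker statement, and that only the stronger statement requires a multi-scale refinement. This is not what the one-scale estimate gives. Running the variational $H^{-1}$ bound at scale $\ell$, with $(SG)$ contributing $E_{\nu_\ab}[W^2]^{1/4}\lesssim\ell$ and the multi-counting $\sum_x D_{\nu_\ab,\ell}(\phi(\tau_{-x}\cdot))\le(2\ell+1)D_{\nu_\ab}(\phi)$, one obtains, after optimizing the $2ab\le\kappa a^2+\kappa^{-1}b^2$ trade-off, precisely $\|r\|_{-1,N}^2\lesssim \frac{\ell^{3}}{N^2}\|f\|^2_{L^4(\nu_\ab)}\sum_x h^2(x)$, i.e.\ a $T\ell^{3}/N$ contribution---one full power of $\ell$ worse than the claimed $T\ell^{2}/N$ and hence insufficient even for the weaker bound. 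The power $\ell^{3}$ is exactly the content of Lemma \ref{globalone_block}, and it is applied in the paper only at the \emph{fixed} small scale $\ell_0$. The reduction from $\ell^{3}$ to $\ell^{2}$ (weak case) or $\ell$ (strong case) is obtained entirely through the dyadic two-blocks telescope $E[f\,|\,\BF\alpha^{(2^{i}\ell_0)}]-E[f\,|\,\BF\alpha^{(2^{i+1}\ell_0)}]$ of Lemmas \ref{globalrenormalization}--\ref{globaltwo-blocks}: the equivalence-of-ensembles variance bound on this increment (of order $\ell^{-1}$ when only $\tilde f(\ab)=0$, of order $\ell^{-2}$ when additionally $\nabla\tilde f(\ab)=0$) is what multiplies the $\ell^{3}$ one-scale cost down to $\ell^{2}$ and $\ell$, respectively. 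Thus the multi-scale argument is needed in \emph{both} cases, not just the stronger one; you cannot finish the weaker case with a single-scale estimate at $\ell$.

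Two smaller remarks. First, the paper's actual decomposition is three-fold, $[f-E(f|\BF\alpha^{(\ell_0)})] + [E(f|\BF\alpha^{(\ell_0)})-E(f|\BF\alpha^{(\ell)})] + [E(f|\BF\alpha^{(\ell)})-\Phi_x]$, with the one-block estimate applied only at the fixed base scale $\ell_0$; your $A_x+B_x$ at scale $\ell$ collapses the first two pieces and thereby hides where the dyadic gain is made. Second, your alternative ``local polynomial orthogonal decomposition of $A_x$'' exploiting $\nabla\tilde f(\ab)=0$ is unlikely to give the required $\ell$: $A_x$ is already orthogonal to all functions of $\BF\alpha^{(\ell)}$, so the vanishing of $\nabla\tilde f(\ab)$ provides no extra cancellation on $A_x$ itself; it is specifically the \emph{increment of conditional expectations across consecutive dyadic scales} that benefits from $\nabla\tilde f(\ab)=0$, which is why the telescoping renormalization is the natural mechanism here.
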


The first estimate is used for ${\mathcal B}_t^{i,N} (H)$ in \eqref{tightness_B} and then in the proof of Proposition \ref{stat_lemma1}, while the second is used for $\mathcal I^{i,N}_t (H)$. We prove this theorem in Section \ref{proof_Boltzmann}.

\subsection{Tightness}
\label{tightness}
We prove tightness of the fluctuation fields, starting from the invariant measure $\nu_{\ab}$, using the Boltzmann-Gibbs principle.

\begin{prop}
\label{stationary_tightness}
The sequences $\{\Y^{N}_t: t\in [0,T]\}_{N\geq 1}$, $\{\M^{N}_t: t\in [0,T]\}_{N\geq 1}$, $\{\I^{N}_t: t\in [0,T]\}_{N\geq 1}$, $\{\B^{N}_t: t\in [0,T]\}_{N\geq 1}$, $\{\K^{N}_t: t\in [0,T]\}_{N\geq 1}$ and $\{\langle \M^{N}\rangle_t: t\in [0,T]\}_{N\geq 1}$, when starting from invariant measure $\nu_{\ab}$, are tight in the uniform topology on $D([0,T], {\mathcal D}^{\prime} (\T)^n)$.
\end{prop}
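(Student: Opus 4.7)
The plan is to apply Mitoma's criterion, reducing tightness of each $\mathcal{D}^{\prime}(\T)^n$-valued sequence to tightness in $D([0,T],\R)$ of its real-valued coordinate projections $\mathcal{X}^{i,N}_\cdot(H)$ for each $H\in\mathcal{D}(\T)$ and $1\leq i\leq n$. Since individual jumps are of size $O(N^{-1/2})$ for every process considered, all candidate limits are continuous and Skorokhod tightness will automatically upgrade to tightness in the uniform topology on $C([0,T],\R)$. Under the stationary measure $\nu_{\ab}$, each one-dimensional projection will then be handled by an Aldous-Rebolledo or Kolmogorov-Chentsov moment criterion.

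The simpler pieces go as follows. The martingale term $\M^{i,N}(H)$ is controlled directly by the fourth-moment estimate \eqref{quad_var_bound}. Its quadratic variation has a pointwise time derivative whose expectation is uniformly bounded via (LG) under $\nu_{\ab}$, hence is Lipschitz in expectation and Aldous-Rebolledo tight. The correction $\K^{i,N}(H)$ has $\kappa^{N,j}_x = O(N^{-1})$ uniformly in $x$, and its sitewise summands depend on $\alpha(x)$ alone, hence are independent under the product measure $\nu_{\ab}$; a direct variance computation gives $\sup_{t\leq T}\E[(\K^{i,N}_t(H))^2]=o(1)$, so $\K^N\to 0$ in $L^2$ uniformly and is trivially tight. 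Tightness of $\Y^{i,N}$ then follows from the decomposition \eqref{mart_decomposition}, the central limit theorem tightness of $\Y^N_0$, and tightness of the four other summands.

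The substantive pieces $\I^{i,N}(H)$ and $\B^{i,N}(H)$ I will handle via the Boltzmann-Gibbs principle of Theorem \ref{gbg_L2}. For $\I^N$, I apply the second Boltzmann-Gibbs estimate to $f = g_i - \tilde g_i(\ab)$ (which has $\tilde f(\ab)=0$) with $h(x) = \Delta^N_x H_s$, optimizing the box size $\ell\sim N^{3/4}$ to balance the two error terms. This replaces the integrand, modulo vanishing $L^2$ error uniform in $t$, by a linear functional of the $\ell$-averaged density fluctuation field, whose time-integral increments admit $\E[|\cdot|^2]\leq C(t-s)^2$ bounds under stationarity. For $\B^N$, the Frame condition (FC) is decisive: it makes the centered integrand $f = g_i-\tilde g_i-\sum_j\partial_{a^j}\tilde g_i(\ab)(\alpha^j-a_0^j)$ satisfy both $\tilde f(\ab)=0$ and $\nabla\tilde f(\ab)=0$ at $\ab$, activating the first, stronger Boltzmann-Gibbs estimate, which replaces $\B^N$ (modulo vanishing $L^2$ error) by a recentered quadratic functional of the averaged density fluctuation field.

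The hard part will be to control increments of this quadratic $\B^N$ piece, whose naive pointwise integrand $L^2$-norm diverges in $N$. The plan is to exploit the spectral gap of the local dynamics (Lemma \ref{spec_gap_lemma} under (LG), (LB)), which controls the autocorrelation decay of the averaged density fluctuation field and hence allows the time-integrated quadratic functional to benefit from effective temporal averaging. This should yield $\E[|\B^N_{\tau+h}-\B^N_\tau|^2]=o(1)$ as $h\downarrow 0$ uniformly in $N$ and in stopping times $\tau\leq T$, which combined with the Boltzmann-Gibbs replacement gives Aldous tightness of $\B^N$.
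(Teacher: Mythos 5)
Your scaffolding (Mitoma's criterion, the decomposition \eqref{mart_decomposition}, the fourth-moment estimate \eqref{quad_var_bound} for the martingale, direct $L^2$ bounds for the easy pieces, and Theorem \ref{gbg_L2} for $\B^{N}$) matches the paper, but two points in the hard part need correcting.

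First, a misattribution: the fact that $f = g_i - \tilde g_i(\ab) - \sum_j \partial_{a^j}\tilde g_i(\ab)(\alpha^j-a_0^j)$ satisfies $\tilde f(\ab)=0$ and $\nabla\tilde f(\ab)=0$ has nothing to do with (FC) — it holds automatically because $f$ is $g_i$ recentered by its full first-order Taylor expansion at $\ab$. The Frame condition is what permits the martingale decomposition to take this form (so that the diverging transport term is absorbed by the moving frame), not what makes the first derivatives of $\tilde f$ vanish. This does not affect the validity of applying the first Boltzmann--Gibbs estimate, but the reasoning you offer for it is wrong.

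Second and more substantially, your plan for closing the $\B^{N}$ increment estimate is not how the argument actually closes, and as sketched it does not close. After the Boltzmann--Gibbs replacement, the quadratic block-average functional does not need, and is not easily helped by, autocorrelation-decay / temporal-averaging arguments based on the spectral gap; the spectral gap has already been fully spent inside the proof of Theorem \ref{gbg_L2}. What is actually needed is: (a) apply Theorem \ref{gbg_L2} on the time window $[s,t]$ using stationarity, which gives a replacement error of order $\tfrac{|t-s|\,\ell}{N}+\tfrac{|t-s|^2 N^2}{\ell^3}$; (b) observe that the replaced quadratic functional itself obeys a \emph{static} $L^2$ bound of order $\tfrac{|t-s|^2N^2}{\ell^2}$, coming from the elementary moment estimate $E_{\nu_{\ab}}[|\a^{(\ell)}(0)-\ab|^4]\lesssim \ell^{-2}$; and (c) choose $\ell=|t-s|^{1/3}N$ (an increment-dependent box size, not an $N$-dependent one), which balances these terms and yields $\E[(\B^{i,N}_t(H)-\B^{i,N}_s(H))^2]\lesssim |t-s|^{4/3}$. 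This is a Kolmogorov--Chentsov bound with explicit exponent $>1$, not the qualitative $o(1)$ Aldous statement you target. Without the increment-dependent choice of $\ell$ and the static $L^2$ control on the replacement term, your sketch does not yield a power-law modulus in $|t-s|$ uniform in $N$, which is the crux of the tightness proof for $\B^{N}$.

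Minor remark: for $\I^{i,N}$ the Boltzmann--Gibbs route you outline (with $\ell\sim N^{3/4}$) is unnecessary; since $\Delta^N_x H$ already carries the full $1/\sqrt{N}$ scaling, a direct $L^2$ computation under the product measure gives $|t-s|^2$, which is what the paper uses.
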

\begin{proof}
 By Mitoma's criterion \cite{Mitoma}, to prove tightness of the sequences with respect to uniform topology on $D([0,T], {\mathcal D}^{\prime} (\T)^n)$, it is enough to show, for each $1\leq i\leq n$, tightness of $\Y^{i,N}_\cdot (H)$, $\M^{i,N}_\cdot(H)$, $\I^{i,N}_\cdot (H)$, $\B^{i,N}_\cdot (H)$, $\K^{i,N}_\cdot (H)$ and $\langle \M^{i,N} (H)\rangle_\cdot$, with respect to the uniform topology for all $H\in {\mathcal D} (\T)$.  Note that all initial values vanish, except $\Y^{i,N}_0(H)$. 

Tightness of $\{\Y^{i,N}_t(H)\; ; \; t\in [0,T]\}$, in view of the decomposition $\Y^{i,N}_t(H) = \Y^{i,N}_0(H) + \I^{i,N}_t(H)  + \B^{i,N}_t(H) + \K^{i,N}_t(H)+\M^{i,N}_t(H)$, will follow from tightness of each term.  The tightness of $\Y^{i,N}_0(H)$, given that we begin under $\nu_{\ab}$, is immediate.

For the martingale term, we use Doob's inequality, stationarity of $\nu_{\BF a_0}$, estimate \eqref{quad_var_bound}, and dividing the time interval into subintervals of size $\delta^{-1}$  to obtain
\begin{eqnarray*}
&&\P_{\nu_\ab}\Big( \sup_{\stackrel{|t-s|\leq \delta}{0\leq s,t\leq T}} |\M^{N}_t(H) - \M^{N}_s(H)|>\varepsilon\Big)\\
&& \ \ \ \ \ \ \lesssim \  3 T \varepsilon^{-4}\delta^{-1} \E_{\nu_\ab}\big[ \big(\M_\delta^{N}(H)\big)^4\big] \ \leq \ C(c, {\BF a_0}, \varepsilon, g, H,T)\Big\{\delta + \frac{1}{N^{3/2}}\Big\}.
\end{eqnarray*}
Tightness now straightforwardly follows.

We now prove tightness for $\{\B^{i,N}_t(H)\; ; \; t\in[0,T]\}$ through the Kolmogorov-Centsov criterion.
Let
$$
V_i(\BF \alpha)  \ = \ g_i(\BF \alpha) - \tilde g_i({\BF a_0}) - \sum_{j=1}^n \partial_{a^j}\tilde g_i({\BF a_0})(\alpha^j(0) -a_0^j). $$
Then, for $H\in {\mc D} ({\mathbb T})$, 
$$\B^{i,N}_t(H) \ =\   2c\int_0^t \sum_{x\in\T_N} \big(\nabla^N_x H_{s}\big) \, \tau_x V_i(\BF\alpha^N_s)ds.$$
By its form, $\tilde V_i({\BF a_0}) = 0$ and $\partial_{a^j} \tilde V_i({\BF a_0}) = 0$ for $1\leq j\leq n$.  Also, for $1\leq j,k\leq n$, $\partial_{a^j}\partial_{a^k}\tilde V_i({\BF a_0})=\partial_{a^j}\partial_{a^k}\tilde g_i({\BF a_0})$.
   
By invoking Theorem \ref{gbg_L2}, with $\ell_0=1$ and $h(x) =\nabla^N_x H $, and translation-invariance of $\nu_{\BF a_0}$, 
for $\ell\geq 1$, we have
\begin{equation}
\label{tightness_B}
\begin{split}
&\E_{\nu_{\BF a_0}} \Big[\Big(\B^{i,N}_t(H)\\\
&
  - c\sum_{x \in \T_N} \sum_{j,k=1}^n \int_0^t \partial_{a^j}\partial_{a^k}\tilde{g}_i({\BF a_0})\Big\{\Big(\big(\alpha^{j,N}_s\big)^{(\ell)}(x)-a^j_0\Big)\Big(\big(\alpha^{k,N}_s\big)^{(\ell)}(x)-a^j_0\Big) \\
  &\ \ \ \ \ \ \ \ \ \ \ \  \ \ \ \ \ \ \ \ \ \ \ \ \ \ \ \ \ \ \ \ \ -\frac{\Gamma_{j,k}( \ab) }{2\ell+1} \Big\}
\nabla^N_x H_{s} \, ds\Big)^2 \Big]\\
&\  \ \ \ \ \ \ \ \ \ \ \ \ \leq \ \ C({\BF a_0},c, g_i) \Big\{ \frac{t\ell}{N} + \frac{t^2N^2}{\ell^{3}}\Big\} \Big[\Big(\frac{1}{N}\sum_{x\in \T_N} (\nabla^N_xH)^2 \Big) + \Big(\frac{1}{N}\sum_{x\in\T_N}|\nabla^N_xH| \Big)^2\Big]. 
\end{split}
\end{equation}
On the other hand, noting $E_{\nu_{\BF a_0}}[|\BF \alpha^\ell(0) - \ab|^4]\leq C\ell^{-2}$, the square of the $L^2(\nu_{\BF a_0})$ norm of the integral term above is bounded by
$$C({\BF a_0}, c, g, H)\frac{t^2N^2}{\ell^2}\Big(\frac{1}{N}\sum_x \big|\nabla^N_xH  \big|\Big)^2.$$
Hence, for $\ell >1$, we have
$$\E_{\nu_{\BF a_0}} \big[(\B^{i,N}_t(H))^2\big] \ \leq \ C(c,{\BF a_0}, g,H) \Big [\frac{t\ell}{N} + \frac{t^2N^2}{\ell^2}\Big].$$ 
By stationarity we have also 
$$\E_{\nu_{\BF a_0}} \big[(\B^{i,N}_t(H) -\B^{i,N}_s (H) )^2\big] \ \leq \ C(c,{\BF a_0}, g,H) \Big [\frac{|t-s| \ell}{N} + \frac{|t-s|^2N^2}{\ell^2}\Big].$$ 
Then, if $\ell$ is taken as $\ell = |t-s|^{1/3}N>1$, we conclude $\E_{\nu_{\BF a_0}}\big[(\B^{i,N}_t(H) - \B^{i,N}_s(H))^2\big] \leq C(c, {\BF a_0},g,H) |t-s|^{4/3}$. However, when $|t-s|^{1/3}N \leq 1$, we have
that
\begin{equation}
\label{4-3_estimate}
\begin{split}
\E_{\nu_{\BF a_0}}\Big[(\B^{i,N}_t(H) - \B^{i,N}_s(H))^2\Big] & \leq  C(c,{\BF a_0}, g) (t-s)^2 N^2 \Big(\frac{1}{N}\sum_x |\nabla_x^N H |\Big)^2 \\ 
&\leq \;  C(c, {\BF a_0}, g,H) |t-s|^{4/3}. 
\end{split}
\end{equation}
This shows tightness of $\B_\cdot^{i,N}(H)$.
 
Finally, the argument for $\I^{i,N} (H)$,  $\langle \M^{i,N}(H)\rangle $ and $\K^{i,N} (H)$, given their forms, are simpler and can be done using invariance of the product measure $\nu_{\BF a_0}$ by squaring all terms, with a Kolmogorov-Centsov right-hand estimate of order $|t-s|^2$, completing the proof.
\end{proof}

\subsection{Identification of limit points}\label{identification}

With tightness (Proposition \ref{stationary_tightness}) in hand, we now identify the limit points of $\{\Y^{N}_t \; ; \;  t\in [0,T]\}_{N\ge 1}$ and its parts in decomposition \eqref{mart_decomposition}.  Let $Q^N$ be the distribution of $$\left\{\Big(\Y^{N}_t, \M^{N}_t, \I^{N}_t, \B^{N}_t, \K^{N}_t, \langle \M^{N}\rangle_t\Big) \; ;\; t\in [0,T]\right\},$$
and let $N'$ be a subsequence where $\{Q^{N'}\}_{N'}$ converges to a limit point $Q$.  Let also $\Y_t$, $\M_t$, $\I_t$, $\B_t$, $\K_t$ and $\D_t$ be the respective limits in distribution of the components.
Since tightness is shown in the uniform topology on $D([0,T], {\mathcal D}^{\prime} (\T)^n)$, we have that $\Y_t$, $\M_t$, $\I_t$, $\B_t$, $\K_t$ and $\D_t$ have a.s. continuous paths.

Let now $G_\varepsilon:\T \rightarrow [0,\infty)$ be a smooth 
function for $0<\varepsilon\leq 1$ which approximates $\iota_\varepsilon: z \in \T \to  \varepsilon^{-1}1(|z|\leq \varepsilon)$ as in the definition of energy solution, see Definition \ref{frac_energy_defn} and before it. That is, $\|G_\varepsilon\|^2_{L^2(\T)}\leq 2\|\iota_\varepsilon\|^2_{L^2(\T)}=\varepsilon^{-1}$ and $\lim_{\varepsilon\downarrow 0}\varepsilon^{-1/2}\|G_\varepsilon - \iota_\varepsilon\|_{L^2(\T)}=0$.
Define, for $1\leq j,k\leq n$, and $H\in {\mc D} (\T)$, 
$$\A^{j,k,\varepsilon,N}_{t}(H) \ := \ \int_0^t \frac{1}{N}\sum_{x\in \T_N} (\nabla^N_x H)\big[\tau_x  \Y^{j,N}_s(G_\varepsilon) \, \tau_x\Y^{k,N}_s(G_\varepsilon)\big]ds.$$

Since for fixed $0<\varepsilon\leq 1$ and $0\le t < t'\le T$ the map 
$$\pi_\cdot \mapsto \int_t^{t'} ds \int du\big(\nabla H(u)\big)\big\{\pi^j_s(\tau_{-u} G_\varepsilon)\pi^k_s( \tau_{-u} G_\varepsilon)\big\}$$
is continuous in the uniform topology on $D([0,T]\, , \, {\mathcal D}^{\prime} (\T)^n)$, we have subsequentially in distribution that for any time $t\in [0,T]$
\begin{align*}
&\lim_{N'\uparrow\infty}\sum_{j,k}\partial_{a^j}\partial_{a^k}\tilde g_i({\BF a_0})\A^{j,k,\varepsilon,N'}_{t}(H) \\ 
&\ \ \ \ \ =\ \sum_{j,k}\partial_{a^j}\partial_{a^k}\tilde g_i({\BF a_0})\int_0^t ds\int du \big(\nabla H(u)\big)\big\{\Y^j_s(\tau_{-u} G_\varepsilon)\Y^k_s(\tau_{-u}G_\varepsilon)\big\} \  =: \ \A^{i,\varepsilon}_{t}(H).
\end{align*}

\begin{prop}
\label{stat_lemma1}
We have, for each $1\leq i\leq n$, $H \in {\mc D} (\T)$ and $t\in [0,T]$, that
\begin{eqnarray*}
&&\limsup_{N\uparrow\infty}\E_{\nu_{\BF a_0}} \Big[ \Big|\B^{i,N}_t(H) - c\sum_{j,k=1}^n\partial_{a^j}\partial_{a^k}\tilde g_i({\BF a_0})\A^{j,k,\varepsilon,N}_{t}(H)\Big|^2\Big] \\
&&\ \ \ \ \ \  \leq \ C(c,{\BF a_0}, g,T) \Big(\varepsilon + \varepsilon^{-1}\|G_\varepsilon - \iota_\varepsilon\|^2_{L^2(\T)}\Big)  \Big[\|\nabla H\|^2_{L^2(\T)} + \|\nabla H\|^2_{L^1(\T)}\Big].
\end{eqnarray*}
Then, on the underlying common probability space,
$\{{\mathcal A}^{i,\varepsilon}_{\cdot}(H)\}_{\ve >0}$ is a uniformly $L^2$ Cauchy sequence, as $\varepsilon\downarrow 0$, as specified in \eqref{en.cond}.  Therefore, the limit
$ \lim_{\varepsilon\downarrow 0} \A^{i,\varepsilon}_{\cdot}(H) =: \A^i_\cdot(H)\in C([0,T], \R)$, does not depend on the specific family $\{G_\varepsilon\}$, and is stationary, $\A^i_{t}(H) - \A^i_s(H)\stackrel{d}{=} \A^i_{t-s}(H)$ for $0\leq s\leq t\leq T$.  Also, a.s. on the underlying common probability space, for $0\leq t\leq T$, we have $c\A^i_t(H) = \B^i_t(H)$.  Moreover, the process $\A^i_\cdot(H)$ has zero quadratic variation.

In addition, for each $1\leq i\leq n$,
\begin{eqnarray*}
&&\lim_{N\uparrow\infty} \E_{\nu_{\BF a_0}}\Big[\Big |\I^{i,N}_t(H) - \frac{1}{2}\sum_j\partial_{a^j}\tilde g_i({\BF a_0})\int_0^t \Y^{j,N}_s(\Delta H)ds\Big|^2\Big] \ = \ 0\\
&&\lim_{N\uparrow\infty} \E_{\nu_{\BF a_0}}\Big[\Big| \langle \M^{i,N}(H)\rangle_t - \frac{\tilde g_i({\BF a_0})}{2}t\|\nabla H\|^2_{L^2(\T)}\Big|^2\Big] \ = \ 0\\
&&\lim_{N\uparrow\infty} \E_{\nu_{\BF a_0}}\Big[\Big| \K^{i,N}_t(H) \Big |^2\Big] \ = \ 0.
\end{eqnarray*}

Then, in $L^2$, with respect to the underlying common probability space, we have 
$$\K^i_t(H)= 0,\ \ \I^i_t(H) \ = \ \frac{1}{2}\partial_{a^i}\tilde g_i({\BF a_0})\int_0^t\Y^i_s(\Delta H)ds \ \ {\rm and \ \ }\D^i_t(H) \ =\  \frac{\tilde g_i({\BF a_0})}{2}t\|\nabla H\|^2_{L^2(\T)}.$$
Also, 
$\M^i_\cdot(H)$ is a continuous martingale with quadratic variation $\D^i_\cdot(H)$.  Moreover, the cross variation between components $\M^i_\cdot(H)$ and $\M^j_\cdot (H)$ vanish for $1\leq i\neq j\leq n$, and therefore, by Levy's theorem, $\M_\cdot$ is a version of the noise in \eqref{gen_OU_sec}. 

\end{prop}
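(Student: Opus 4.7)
My strategy is to derive each convergence from the Boltzmann-Gibbs estimates of Theorem \ref{gbg_L2}, combined with stationarity of $\nu_\ab$ and standard martingale-convergence arguments; the linchpin is the first bound on $\B^{i,N}$, from which the characterization of $\A^i$ flows. Setting $V_i(\BF\alpha)=g_i(\BF\alpha(0))-\tilde g_i(\ab)-\sum_j\partial_{a^j}\tilde g_i(\ab)(\alpha^j(0)-a_0^j)$ makes $\tilde V_i(\ab)=0$ and $\nabla\tilde V_i(\ab)=0$ by construction, while $\partial_{a^j}\partial_{a^k}\tilde V_i(\ab)=\partial_{a^j}\partial_{a^k}\tilde g_i(\ab)$, and $\B^{i,N}_t(H)=2c\int_0^t\sum_x(\nabla^N_x H_s)\tau_x V_i(\BF\alpha^N_s)ds$. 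I apply the first Boltzmann-Gibbs estimate with $\ell=\lfloor\varepsilon N\rfloor$ and $h=\nabla^N H$, replacing $\tau_x V_i$ by the quadratic $\tfrac12\sum_{j,k}\partial_{a^j}\partial_{a^k}\tilde g_i(\ab)\{((\alpha^{j,N})^{(\ell)}(x)-a_0^j)((\alpha^{k,N})^{(\ell)}(x)-a_0^k)-\Gamma_{jk}(\ab)/(2\ell+1)\}$. Since $\sqrt{N}((\alpha^{j,N})^{(\ell)}(x)-a_0^j)$ coincides (up to the common moving-frame shift absorbed in $H_s$) with $\tau_x\Y^{j,N}_s(\iota_\varepsilon)$ at this scale, I swap $\iota_\varepsilon$ for $G_\varepsilon$ at cost $\varepsilon^{-1}\|G_\varepsilon-\iota_\varepsilon\|^2_{L^2}$; the diagonal remainder $\Gamma_{jk}(\ab)/(2\ell+1)$ tested against $\nabla^N H$ contributes $O(\varepsilon^{-1}N^{-1})$ via $\sum_x\nabla^N_x H=0$ and is subsumed in the stated bound.

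Passing to the subsequential limit $N'\to\infty$ at fixed $\varepsilon$ is justified by continuity in the uniform topology on $D([0,T],{\mathcal D}^{\prime}(\T)^n)$ of the map $\pi\mapsto\int_0^t\int(\nabla H)(u)\pi^j_s(\tau_{-u}G_\varepsilon)\pi^k_s(\tau_{-u}G_\varepsilon)du\,ds$. Combining the resulting bound at two values $\varepsilon_1,\varepsilon_2$ via triangle inequality yields the uniform $L^2$ Cauchy property \eqref{en.cond}, hence a limit $\A^i_\cdot(H)\in C([0,T],\R)$, which is independent of $\{G_\varepsilon\}$ by uniqueness of $L^2$ limits and satisfies $c\A^i_t(H)=\B^i_t(H)$ almost surely. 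Stationarity of $\A^i$ is inherited from each $\A^{j,k,\varepsilon,N}$, and zero quadratic variation follows since each approximant $\A^{j,k,\varepsilon,N}_\cdot(H)=\int_0^t(\cdot)ds$ is absolutely continuous, a property retained through the uniform $L^2$ double limit.

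The three remaining convergences are treated more directly. For $\I^{i,N}_t(H)$, I apply the second Boltzmann-Gibbs estimate to $f=g_i-\tilde g_i(\ab)$ with $h=(2\sqrt{N})^{-1}\Delta^N H$ and $\ell\sim N^{3/4}$, obtaining an $O(N^{-1/2})$ error. Under Frame condition (FC) only the $j=i$ term in the linear replacement $\sum_j\partial_{a^j}\tilde g_i(\ab)((\alpha^{j,N})^{(\ell)}-a_0^j)$ survives, and summation by parts transferring $(\cdot)^{(\ell)}$ onto $\Delta^N H$ (with $(\Delta^N H)^{(\ell)}\to\Delta H$ uniformly) yields $\tfrac12\partial_{a^i}\tilde g_i(\ab)\int_0^t\Y^i_s(\Delta H)ds$. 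The limit of $\langle\M^{i,N}(H)\rangle_t$ comes from a direct local-ergodic replacement of $g_i(\BF\alpha^N_s(x))$ by $\tilde g_i(\ab)$ (the $c/\sqrt{N}$ asymmetry corrections in $p(\pm1)$ being subdominant), and $\K^{i,N}_t(H)\to 0$ in $L^2$ follows simply from $\kappa^{N,i}_x=O(N^{-1})$ applied in Cauchy-Schwarz.

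Lastly, the martingale property of $\M^i_\cdot(H)$ in the filtration of $\Y_\cdot$ passes to the limit via the uniform fourth-moment bound \eqref{quad_var_bound}, which supplies the required uniform integrability; continuity of paths is inherited from tightness in the uniform topology. The cross variation $\langle\M^{i,N}(H),\M^{j,N}(H)\rangle$ vanishes microscopically for $i\neq j$ because distinct species cannot jump simultaneously in the zero-range mechanism, so the limiting cross variation vanishes; together with the quadratic variation identification, Levy's theorem realises $\M_\cdot$ as the noise $(\sqrt{\tilde g_i(\ab)}\nabla\dot\W^i)_{i=1}^n$ of \eqref{gen_OU_sec}. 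The principal obstacle is the first step: producing the Boltzmann-Gibbs bound at $\ell=\lfloor\varepsilon N\rfloor$ with exactly the claimed dependence $\varepsilon+\varepsilon^{-1}\|G_\varepsilon-\iota_\varepsilon\|^2_{L^2}$ on the smoothing, cleanly executing the transition from the box average $(\alpha^{j,N})^{(\ell)}$ to $\tau_x\Y^{j,N}(G_\varepsilon)$ through the intermediate $\iota_\varepsilon$, and keeping track of the diagonal correction simultaneously.
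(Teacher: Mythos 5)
Your overall route matches the paper's: apply the first Boltzmann--Gibbs estimate of Theorem \ref{gbg_L2} at scale $\ell=\lfloor\varepsilon N\rfloor$ with $h=\nabla^N H$, swap $\iota_\varepsilon$ for $G_\varepsilon$ via Schwarz and stationarity, pass to subsequential limits using continuity of the bilinear map, and then handle $\I^{i,N}$, $\langle\M^{i,N}\rangle$, $\K^{i,N}$ by softer $L^2$ estimates and martingale convergence. Most of the pieces are in order, and the reduction under (FC) of the linear replacement for $\I^{i,N}$ to the single term $j=i$ is correctly identified.

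There is, however, a genuine gap in your zero quadratic variation argument. You argue that each $\A^{j,k,\varepsilon,N}_\cdot(H)$ is a time integral, hence absolutely continuous, and that this property ``is retained through the uniform $L^2$ double limit.'' That inference is false in general: absolute continuity (and more generally any pathwise regularity) does not pass to $L^2$ limits of processes --- uniform or time-marginal. (Any approximating sequence of a genuine semimartingale with nontrivial martingale part can be taken absolutely continuous for each finite index.) The correct route, which the paper uses, is quantitative: the tightness estimate \eqref{4-3_estimate} gives $\E_{\nu_\ab}\big[(\B^{i,N}_t(H)-\B^{i,N}_s(H))^2\big]\lesssim |t-s|^{4/3}$ uniformly in $N$, hence $\E\big[(\B^i_t(H)-\B^i_s(H))^2\big]\lesssim|t-s|^{4/3}$ by Fatou; since $c\A^i=\B^i$ a.s., the same H\"older-$2/3$ bound in $L^2$ applies to the increments of $\A^i_\cdot(H)$, and because the exponent $4/3>1$ dominates the diameter of a partition, the quadratic variation vanishes. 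You need to supply exactly this increment estimate rather than appeal to absolute continuity of the approximants.

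A smaller imprecision: you write that the centering term $\Gamma_{jk}(\ab)/(2\ell+1)$ tested against $\nabla^N H$ contributes $O(\varepsilon^{-1}N^{-1})$; in fact it contributes exactly zero since $\sum_x\nabla^N_x H_s=0$, which is precisely why the paper can insert this centering for free. This does not damage your conclusion, but the bookkeeping should reflect that the term is not merely small but null.
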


\begin{proof} Suppose the limit display for $\B^i_\cdot (H)$ holds.  Then, by Fatou's lemma, valid for weakly converging random variables through Skorohod's representation theorem, that
$$\E_{\nu_{\BF a_0}}\big[\big|\B^i_t(H) - c \A^i_t(H)
\big|^2\big]\leq \lim_{\varepsilon\downarrow 0} C(c,{\BF a_0},g, H)t\varepsilon =0,$$
and so $c\A^i_t(H)=\B^i_t(H)$ a.s.  Moreover, since by the tightness estimate \eqref{4-3_estimate} in the proof of Proposition \ref{stationary_tightness}, we have $\E_{\nu_\ab}\big[(\B^I_t(H)-\B^i_s(H))^2\big]\lesssim |t-s|^{4/3}$, and therefore $\E_{\nu_{\ab}}\big[(\A^i_t(H)-\A^i_s(H))^2\big]\lesssim |t-s|^{4/3}$, we conclude the process $\A^i_\cdot(H)$ has zero quadratic variation.

We now show the limit with respect to $\{\B^{i,N}_t(H)\}_{N\ge 1}$.  Note, for $\ell= \lfloor \varepsilon N\rfloor$, that
\begin{align*}
&\sum_{x\in \T_N}(\nabla^N_x H_{s})\Big(\big(\alpha^{j,N}_s\big)^{(\ell)}(x) - a^j_0\Big)\Big(\big(\alpha^{k,N}_s\big)^{(\ell)}(x)-a^k_0\Big)\\
& \ =  \sum_{x\in \T_N}(\nabla^N_x H_{s})\Big(\frac{1}{2\ell+1}\sum_{|z|\leq \ell}(\alpha^{j,N}_s(z+x) - a^j_0)\Big)\Big(\frac{1}{2\ell +1}\sum_{|z|\leq \ell}(\alpha^{k,N}_s(z+x) - a^k_0)\Big)\\
& \ =  \frac{1+O(N^{-1})}{N}\sum_{x\in \T_N} (\nabla^N_x H)\big[\tau_x \Y^{j,N}_s(\iota_\varepsilon)\big]\big[\tau_x \Y^{k,N}_s(\iota_\varepsilon)\big].
\end{align*}
Here, the shift by $N^{-1}\lfloor c\lambda sN^2/(2\sqrt{N})\rfloor$ in $\tau_x\Y^{N,j}_s\cdot \tau_x\Y^{N,k}_s$ was transferred from $\nabla^N_x H_{s}$ (cf. \eqref{shifted_H}). Then, with $\ell = \lfloor \varepsilon N\rfloor$, by Theorem \ref{gbg_L2}, as in the bound \eqref{tightness_B}, we have
\begin{eqnarray*}
&&\limsup_{N\uparrow\infty}\E_{\nu_{\BF a_0}}\Big[\sup_{0\leq t \leq T}\Big(\B^{i,N}_t(H)\\
&&\ \ \ \ \ \ \ \ \ \ \ \ \ \ \ \ \ \ \ \ - c\sum_{j,k}\partial_{a^j}\partial_{a^k}\tilde g_i({\BF a_0})\int_0^t \frac{1}{N} \sum_{x\in \T_N} (\nabla^N_x H)\tau_x \; \Y^{j,N}_s(\iota_\varepsilon)\tau_x\Y^{k,N}_s(\iota_\varepsilon) ds\Big)^2\Big]\\
&&\  = \ \limsup_{N\uparrow\infty}\E_{\nu_{\BF a_0}}\Big[\sup_{0\leq t\leq T}\Big( \B_t^{i,N}(H) \\
&&\ \ \ \ \ \ \ \  - c\sum_{j,k}\partial_{a^j}\partial_{a^k}\tilde g_i({\BF a_0})\int_0^t  \sum_{x\in \T_N} (\nabla^{N}_x H_{s})\Big\{\big((\alpha^{j,N}_s)^{(\ell)}(x) - a^j_0\big)\big((\alpha^{k,N}_s)^{(\ell)}(x)-a^k_0\big)\\
&&\ \ \ \ \ \ \ \ \ \ \ \ \ \ \ \ \ \ \ \ \ \ \ \ \ \ \ \ \ \ \ \ \ \ \ \ \ \  - \frac{\Gamma_{j,k} (\ab)}{2\ell +1}\Big\}ds\Big)^2\Big]\\
&&\  \leq \ \limsup_{N\uparrow\infty}C(c,{\BF a_0}, g, T)\Big( \varepsilon + \frac{1}{\varepsilon^3 N}\Big)\Big[\Big(\frac{1}{N}\sum_{x\in \T_N} \big(\nabla^N_x H\big)^2\Big) + \Big(\frac{1}{N}\sum_{x\in \T_N}\big|\nabla^N_x H\big|\Big)^2\Big].
\end{eqnarray*}
Here, as the sum of $\nabla^N_x H_{s}$ on $x$ vanishes, we introduced the centering constant $(2\ell +1)^{-1}\Gamma_{j,k} (\ab)$ in the second line.  
Now, 
\begin{align*}
&\Y^{j,N}_s(\iota_\varepsilon)\Y^{k,N}_s(\iota_\varepsilon) - \Y^{j,N}_s(G_\varepsilon)\Y^{k,N}_u(G_\varepsilon) \\
&\ \ \ \ \ \ \ \ \ \ \ \ \ \ \ \  =\  \big[\Y^{j,N}_s(\iota_\varepsilon) - \Y^{j,N}_s(G_\varepsilon)\big] \Y^{k,N}_s (\iota_\varepsilon) 
+ \Y^{j,N}_s(G_\varepsilon) \big[\Y^{k,N}_s(\iota_\varepsilon) - \Y^{k,N}_s(G_\varepsilon)\big].
\end{align*}
By Schwarz inequality and stationarity of $\nu_{\BF a_0}$, 
$$
\limsup_{N\uparrow\infty} \E_{\nu_{\BF a_0}} \Big[\sup_{0\leq t\leq T}\Big(\int_0^t\frac{1}{N} \sum_{x\in \T_N} (\nabla^{N}_x H)\tau_x\Y^{N,j}_s (\iota_\varepsilon)\tau_x\Y^{N,k}_s (\iota_\varepsilon) ds - \A^{j,k,\varepsilon,N}_{t}(H)\Big)^2\Big]$$
is less than
\begin{eqnarray*}
&&T^2 \limsup_{N\uparrow\infty} \E_{\nu_{\BF a_0}} \Bigg[\Bigg(\frac{1}{N} \sum_{x\in \T_N} |\nabla^{N}_x H|\big|\tau_x\Y^{j,N}_0 (\iota_\varepsilon)\tau_x\Y^{k,N}_0 (\iota_\varepsilon) \\
&&\ \ \ \ \ \ \ \ \ \ \ \ \ \ \ \ \ \ \ \ \ \ \ \ \ \ \ \ \ \ \ \ \ \ \ \ \ \ \ \ \ \ \ \ \ \ \ \ \ \ \ \ \ \ \ - \tau_x \Y^{j,N}_0 (G_\varepsilon)\tau_x\Y^{k,N}_0 (G_\varepsilon)\big| \Bigg)^2\Bigg]\\
&&\ \leq \ C(\ab) T^2 \varepsilon^{-1}\|G_\varepsilon-\iota_\varepsilon\|^2_{L^2(\T)}\Bigg(\frac{1}{N}\sum_{x\in \T_N}\big|\nabla^{N}_x H\big|\Bigg)^2.
\end{eqnarray*}
These estimates with the inequality $(a+b)^2 \leq 2a^2 + 2b^2$ finish the proof of the $\{\B^{i,N}_\cdot\}_{N\ge 1}$ limit.


 
The proof for the limit of $\{\I^{i,N}_\cdot\}_N$ is analogous, since $\Delta H$ is uniformly continuous.  Also,
the arguments for $\{\K^{i,N}_\cdot \}_N$ and $\{\langle \M^{i,N}\rangle_\cdot\}_N$ and identification of limits of $\K^i_\cdot$ and $\D^i_\cdot $, noting their forms, and that the process starts from invariant product measure $\nu_{\BF a_0}$, follow by straightforward $L^2$ calculations.

We now address the martingale convergence. 
By the identification given before, any limit point of the quadratic variation sequence equals $\D_\cdot(H)$.
Also, the limit of martingale sequence, with respect to the uniform topology, $\M_\cdot(H)$, is a continuous martingale.  Now, by the triangle inequality, Doob's inequality and the quadratic variation bound \eqref{quad_var_bound}, for $1\leq i\leq n$,
\begin{eqnarray*}
&& \E_{\nu_{\BF a_0}}\Big[\sup_{0\leq s\leq t} |\M^{i,N}_s(H) - \M^{i,N}_{s -}(H)|\Big]\\
&&\ \ \ \ \ \ \ \ \ \ \ \ \ \leq \ 2\E_{\nu_{\BF a_0}} \Big[ \sup_{s\in [0,t]}|\M^{i,N}_s(H)|^2\Big]^{1/2}\\
&& \ \ \ \ \ \ \ \ \ \  \ \ \  \leq \ 2 \E_{\nu_{\BF a_0}}\Big[\langle \M^{i,N}(H)\rangle_t\Big]^{1/2} \ \leq \ C(c,{\BF a_0}, g,H,T).
\end{eqnarray*}
Then, by Corollary VI.6.30 of
\cite{JS}, $(\M_\cdot^{i,N'}(H), \langle \M^{i,N'}(H)\rangle_\cdot)$ converges in distribution to $(\M^i_\cdot(H), \langle \M^i(H)\rangle_\cdot)$.  Since $\langle \M^{i,N'}(H)\rangle_\cdot$ converges in distribution to $\D^i_\cdot(H)$, we have $\langle \M^i(H)\rangle_t = \tilde g_i({\BF a_0})t\|\nabla H\|^2_{L^2}$ for $0\leq t\leq T$.

Moreover, for $i\neq j$, the cross variations vanish:
\begin{align}  \label{eq:4.cross}
\frac{d}{dt} & \lan \M^{i,N}(G_1), \M^{j,N}(G_2)\ran_t \\
& = \! \! N \left(L_N (\lan \a_t^{i,N},G_1\ran \lan \a^{j,N}_{t},G_2\ran) \right.  \notag  \\
& \hskip 15mm  \left.  - \lan \a_t^{i,N},G_1\ran L_N\lan \a^{j,N}_{t},G_2\ran
- \lan \a^{j,N}_{t},G_2\ran L_N \lan \a_t^{i,N},G_1\ran \right) =0.  \notag
\end{align}
Hence, by Levy's theorem, $\M_\cdot$ is a version of the noise desired.
This finishes the proof.
\end{proof}

\subsection{Proof of Theorem \ref{secondorder_thm}: Nonlinear fluctuations}
\label{main_proof_subsec}

 Let $H\in {\mathcal D} (\T)$, $t\in [0,T]$.  By the decomposition \eqref{mart_decomposition}, identification Proposition \ref{stat_lemma1}, and tightness of the constituent processes $\M^{N}_t$, $\Y_t^{N}$, $\Y^{N}_0$, $\I^{N}_t$, $\B^{N}_t$ and $\K^{N}_t$ in the uniform topology, any limit point of $$(\M^{N}_t, \Y^{N}_t, \Y^{N}_0, \I^{N}_t, \B^{N}_t, \K^{N}_t)$$ is such that $\Y_t$ satisfies the multi-species energy condition \eqref{en.cond} where $\A_\cdot$ has zero quadratic variation.  
 Also, componentwise,
$$\M^i_t(H) \ =\ \Y^i_t(H) - \Y^i_0(H) - \frac{1}{2}\partial_{a^i}\tilde g_i({\BF a_0})\int_0^t\Y^i_s(\triangle H)ds -c\A^i_t(H)$$
is a continuous martingale with respect to the filtration generated by $\Y_\cdot$, with quadratic variation $\tilde g_i(\ab)\|\nabla H\|^2_{L^2}$.  In addition, the cross variations between $\M^i_t$ and $\M^j_t$ vanish.
Moreover, $\Y_0$, as the limit of $\Y^N_0$ is a spatial white noise with covariance \eqref{standard_covariance}.  At this point, we have verified items (i), (ii) and (iii) of Definition \ref{frac_energy_defn} in the specification of the multi-scale energy solution.

We now show item (iv) of Definition \ref{frac_energy_defn}.  The time-reversed process $\hat\a_\cdot = \a_{T-\cdot}$, as noted in the Subsection \ref{Markov}, is a multi-species process with reversed jump probabilities $\hat p(\cdot) = p(-\cdot)$ with now drift $-c/(2\sqrt{N})$.  Define $\hat\Y^N_\cdot = \Y^N_{T-\cdot}$.  All the analysis with respect to the `forward' process done previously applies also to $\hat \Y^N_\cdot$ and to the objects $\hat \M^N_\cdot = \M^N_{T-\cdot}$, $\B^N_\cdot = \B^N_{T-\cdot}$, $\hat \I^N_\cdot = \I^N_{T-\cdot}$ and $\hat\K^N_\cdot = \K^N_{T-\cdot}$.  The process $\hat \Y^N_\cdot$ will satisfy the multi-species energy condition \eqref{en.cond} with respect to a process $\hat\A_\cdot$.  One may compute therefore for a limit point and $1\leq i\leq n$ that 
$$\hat \M^i_t(H) \ =\ \hat \Y^i_t(H) - \hat \Y^i_0(H) - \frac{1}{2}\partial_{a^i}\tilde g_i({\BF a_0})\int_0^t \hat \Y^i_s(\triangle H)ds + c\hat \A^i_t(H).$$
Finally, to complete the item, we note by explicit computation that
\begin{align*}
\lim_{\varepsilon \to 0} \; \hat \A^{i,\varepsilon}_t(H) 
&= \lim_{\varepsilon \to 0}\;  \sum_{j,k}\partial_{a^j}\partial_{a^k}\tilde g_i({\BF a_0})
\int_0^t\int \nabla H(u)  \tau_{-u}\hat\Y^j_s(\iota_\e) \tau_{-u}\hat\Y^k_s(\iota_\e)  du ds\\
&=\lim_{\varepsilon \to 0} \; \sum_{j,k}\partial_{a^j}\partial_{a^k}\tilde g_i({\BF a_0})
\int^T_{T-t}\int \nabla H(u)
\tau_{-u}\Y^j_s(\iota_\varepsilon) \tau_{-u }\Y^k_s(\iota_\varepsilon) du ds \\
& = \ \A^i_T(H)-\A^i_{T-t}(H).
\end{align*}
Therefore, by the unique characterization of the multi-species energy solution in Definition \ref{frac_energy_defn}, stated in Remark \ref{GPrem}, we finish the proof of Theorem \ref{secondorder_thm}. \qed

\section{Proof of Theorem \ref{gbg_L2}:  Boltzmann-Gibbs principle}
\label{proof_Boltzmann}

We adapt the proof of Theorem 3.2 in \cite{GJS}  and Theorem 3.1 in \cite{S_longrange} to the multi-species setting.  For the convenience of the reader, although some of the arguments are similar, we give the pertinent details.  We first recall some preliminary notions and estimates, before proving Theorem \ref{gbg_L2} at the end of the Subsection \ref{subsec:BG-0}.

We will need an `equivalence of ensembles' estimate for the zero-range invariant measure $\nu_\ab$, proved in Subsection \ref{ee_section}.  We remark the density ${\BF a_0}$ here is generic and need not satisfy the Frame condition (FC).

In the following, for $\zeta \in \Omega$ or $\zeta\in \Omega^n$, we will abbreviate $\zeta^{(\ell)}(0) = \zeta^{(\ell)}$ for $\ell\geq 1$.  Recall the covariance of $\alpha^j$ and $\alpha^k$, under $\nu_\ab$, is denoted by $\Gamma_{j,k} (\ab)$ for $1\leq j,k\leq n$.

\subsection{Proof of the Boltzmann-Gibbs principle}
\label{subsec:BG-0}

\begin{theorem}[equivalence of ensembles]\label{2EE}
 Let $f:\Omega^n\rightarrow \R$ be a local $L^5(\nu_{\ab})$ function, supported on sites in $\Lambda_{\ell_0}$, such that $\tilde f(\ab)=0$ and $ \nabla \tilde f(\ab)=0$.  Then,
there exists a constant $C= C(\ab,\ell_0,n)$ such that for $\ell\geq\ell_0$ we have
\begin{align*}
&\Big\| E_{\nu_{\ab}}\big[f(\BF\alpha)| \alpha^{i,(\ell)}: 1\leq i\leq n\big] \\
&\ \ \ \ \ \ \ \ \ \ \ \ \ \ \  - \sum_{j,k=1}^n \Big\{(\alpha^{j,(\ell)}-a^j_0)(\alpha^{k,(\ell)}-a^k_0)- \frac{\Gamma_{j,k} (\ab) }{2\ell +1}\Big\} \partial_{a^j}\partial_{a^k} \tilde f(\ab)
\Big\|_{L^4(\nu_\ab)}\\
&\ \ \ \ \ \  \ \leq \ \frac{C\|f\|_{L^5(\nu_{\ab})}}{\ell^{3/2}}.
\end{align*}

On the other hand, when only $\tilde f(\ab)=0$ is known,
\begin{align*}
&\Big\| E_{\nu_{\ab}}[f({\BF \alpha})| \alpha^{i,(\ell)}: 1\leq i\leq n ]  - \sum_{j=1}^n \big \{ \alpha^{j,(\ell)} - a^j_0\big\}\partial_{a^j}\tilde f(\ab)  
\Big\|_{L^4(\nu_\ab)} \ \leq \ \frac{C\|f\|_{L^5(\nu_{\ab})}}{\ell}.
\end{align*}
\end{theorem}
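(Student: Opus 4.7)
I plan to adapt the arguments in Theorem~3.2 of \cite{GJS} and Theorem~3.1 of \cite{S_longrange} to the multi-species product measure, combining two ingredients: identification of the conditional expectation with a canonical expectation via the product structure of $\nu_\ab$, and a quantitative equivalence of ensembles at second order.

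Since $\nu_\ab$ is product over $\T_N$, conditioning on the empirical means is equivalent to conditioning on the species totals $\sum_{x\in \Lambda_\ell}{\BF \alpha}(x)={\BF k}$, so
\[
E_{\nu_\ab}[f \mid \alpha^{i,(\ell)}:1\le i\le n] \;=\; E_{\nu_{{\BF k},\ell}}[f], \qquad {\BF k}=(2\ell+1)\big(\alpha^{i,(\ell)}\big)_{i=1}^n.
\]
Set ${\BF z}={\BF k}/(2\ell+1)$. Because the marginals of $\nu_\ab$ have the explicit form \eqref{eq:invariant}, $E_{\nu_{{\BF k},\ell}}[f]$ may be written as a sum of values of $f$ weighted by ratios of partition functions over the complement $\Lambda_\ell \setminus \Lambda_{\ell_0}$. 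Applying an $n$-dimensional local CLT with Edgeworth correction to these partition functions---sums of roughly $2\ell+1$ i.i.d.\ $\PZ^n$-valued vectors with mean $\ab$ and covariance $\Gamma(\ab)$---yields, in the bulk window $|{\BF z}-\ab| \le \ell^{-1/2}\log \ell$, an expansion of the form
\[
E_{\nu_{{\BF k},\ell}}[f] \;=\; \tilde f({\BF z}) \;-\; \frac{1}{2(2\ell+1)}\sum_{j,k=1}^n \Gamma_{j,k}(\ab)\,\partial_{a^j}\partial_{a^k}\tilde f(\ab) \;+\; O(\ell^{-3/2})\,\|f\|_{L^5(\nu_\ab)}.
\]
Taylor expanding $\tilde f({\BF z})$ about $\ab$ and using $\tilde f(\ab) = 0$, $\nabla \tilde f(\ab)=0$, the Hessian term becomes $\tfrac12 \sum_{j,k} \partial_{a^j}\partial_{a^k}\tilde f(\ab)(z^j-a_0^j)(z^k-a_0^k) + O(|{\BF z}-\ab|^3)$, and matching with the Edgeworth correction recovers the centered quadratic expression of the theorem on the bulk.

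To reach the $L^4(\nu_\ab)$ bound, I would use the moment estimate $\E_{\nu_\ab}[\|\boldsymbol\alpha^{(\ell)}-\ab\|^{4m}] \lesssim \ell^{-2m}$, which follows from independence of the $\BF \alpha(x)$ over $x\in \T_N$ together with finite moments of the one-site marginal; the cubic Taylor remainder then contributes $O(\ell^{-3/2})$ in $L^4$, as required. Contributions from the tail $|{\BF z}-\ab|>\ell^{-1/2}\log\ell$ are superpolynomially small by large-deviation bounds on $\nu_\ab$ and are dominated via H\"older by $\|f\|_{L^5(\nu_\ab)}$. The second assertion, where only $\tilde f(\ab)=0$ is assumed, follows by the same scheme truncated at first order, producing the weaker $O(\ell^{-1})$ rate. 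The main obstacle is arranging the $n$-dimensional local CLT with Edgeworth precision for the $\PZ^n$-valued i.i.d.\ variables, uniform in ${\BF z}$ over the bulk window; non-degeneracy of $\Gamma(\ab)$ (noted before Lemma~\ref{lem:2.1}) is essential, with minor additional care when some $a_0^i=0$ so that the degenerate species drops out cleanly.
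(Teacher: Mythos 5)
Your plan follows essentially the same route as the paper: identify the conditional expectation with a canonical expectation, write it as a ratio of local densities of i.i.d.\ sums, apply an $n$-dimensional local CLT with Edgeworth-level precision to expand that ratio, Taylor-expand $\tilde f$ about $\ab$ to produce the centered quadratic, and control the tail via large deviations and H\"older against $\|f\|_{L^5}$. The one technical difference worth flagging is that the paper works on a fixed window $|\y|\le\delta$ by \emph{tilting}, i.e.\ rewriting the conditional expectation under $\nu_{\y+\ab}$ so the local CLT is always applied at the matching density, whereas you propose a moderate-deviation window $|\z-\ab|\le\ell^{-1/2}\log\ell$ and apply the local CLT directly at $\ab$; this should work too, but uniformity of the Edgeworth remainder at the edge of that window is exactly the point you correctly single out as the main obstacle, and the tilting device is what the paper uses to sidestep it cleanly (it also supplies, via the chemical-potential derivatives, the identification of the Edgeworth coefficients with $\Gamma(\ab)^{-1}$ and hence with $\nabla^2\tilde f(\ab)$, a computation your sketch implicitly relies on).
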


\begin{proof}
The theorem is proved in Subsection \ref{ee_section}.
\end{proof}

We now define notions of $H_{1}$ and $H_{-1}$ norms which will be useful.
Let $S_N= (L_N + L^*_N)/2$ be the generator of the process with respect to symmetric nearest-neighbor jump probability $s(\pm 1)=1/2$ and $s(x)=0$ for $x\neq \pm 1$.   Define the $H_{1,N}$ semi-norm $\|\cdot\|_{1,N}$ on local, bounded functions by
$$\|f\|_{1,N}^2 \ := \ E_{\nu_{\BF a_0}}\big[f(-S_N)f\big] \ = \ N^2D_{\nu_{\BF a_0}}(f),$$
where $D_{\nu_{\BF a_0}}(f)$ is the grand-canonical Dirichlet form
$$D_{\nu_{\BF a_0}}(f) \ = \ \frac{1}{2}\sum_{i=1}^n\sum_{\stackrel{|x-y|=1}{x,y\in \T_N}} E_{\nu_{\BF a_0}}\Big[g_i(\BF \alpha (x))\big( \nabla^i_{x,y}f(\BF \alpha )\big)^2 \Big].$$
Here, $\nabla^{i}_{x,y}f(\BF \alpha) := f(\BF \alpha^{x,y;i})-f(\BF \alpha)$ for $1\leq i\leq n$ and $x,y\in \T_N$.

Let $H_{1,N}$ be the Hilbert space consisting of the completion of functions with finite $H_{1,N}$ norm modulo norm-zero functions.  Note that local bounded functions are dense in $H_{1,N}$.
Let $\|\cdot\|_{-1,N}$ be the dual semi-norm with respect to the $L^2(\nu_{\BF a_0})$ inner-product given by
$$\|f\|_{-1,N} \ := \ \sup \left\{ \frac{E_{\nu_{\BF a_0}}[f\phi]}{\|\phi\|_{1,N}}\; ; \;  \phi\neq 0 \ {\rm local, \ bounded \ s.t.\  \| \phi \|_{1,N}>0}\right\}.$$
Denote $H_{-1,N}$ as the Hilbert space corresponding to the completion over those functions with finite $\|\cdot\|_{-1,N}$ norm modulo norm-zero functions.

We now state a multi-species form of Proposition 4.1 in \cite{GJS}.
Denote the $\Lambda_\ell$-restricted grand-canonical Dirichlet form, on local, bounded functions, by
$$D_{\nu_{\BF a_0}, \ell} \, (\phi) \ = \ \frac{1}{4}\sum_{i=1}^n\sum_{\stackrel{|x-y|=1}{x,y\in \Lambda_\ell}} E_{\nu_{\BF a_0}}\Big[ g_i(\BF \alpha (x))\big(\nabla^i_{x,y}\phi(\BF \alpha)\big)^2\Big].$$

Recall the inverse of the spectral gap, $W(\k, \ell)$, defined in Subsection \ref{specgap_section}.

\begin{prop} \label{crucial estimate}
Let $r:\Omega^n \to \mathbb R$ be a local $L^4(\nu_{\BF a_0})$ function, supported on sites in $\Lambda_{\ell_0}$, for $\ell_0\geq 1$.
Suppose that
$E_{\nu_{\BF a_0}}[r\, |\, \BF \alpha^{(\ell_0)}]=0$ a.s. Then, for local, bounded functions $\phi$, we have
\begin{equation*}
\big|E_{\nu_{{\BF a_0}}}[ r(\BF \alpha) \phi(\BF \alpha)]\big| \ \leq \ E_{\nu_{\BF a_0}}\Big[ W\Big(\sum_{x\in \Lambda_{\ell_0}}\BF \alpha (x), \ell_0\Big)^2\Big]^{1/4} \; \|r\|_{L^4(\nu_{\BF a_0})} \;  D_{\nu_{\BF a_0},\ell_0}^{1/2}(\phi).
\end{equation*}
\end{prop}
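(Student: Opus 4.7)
The plan is to condition on the $\ell_0$-canonical ensemble so as to exploit the hypothesis $E_{\nu_{\BF a_0}}[r\,|\,\BF\alpha^{(\ell_0)}]=0$, and then use the Poincar\'e inequality on each fiber. Set $\k(\BF\alpha)=\sum_{x\in\Lambda_{\ell_0}}\BF\alpha(x)$; because $\nu_{\BF a_0}$ is a product measure and the restrictions of $\nu_{\BF a_0}$ to the sub-$\sigma$-field generated by $\{\BF\alpha(x)\}_{x\in\Lambda_{\ell_0}}$, conditioned on $\k$, are the canonical measures $\nu_{\k,\ell_0}$, the hypothesis is equivalent to $E_{\nu_{\k,\ell_0}}[r]=0$ for $\nu_{\BF a_0}$-a.e. $\k$. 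Hence, inserting this centering,
\begin{equation*}
E_{\nu_{\BF a_0}}[r\phi] \;=\; E_{\nu_{\BF a_0}}\!\left[E_{\nu_{\k,\ell_0}}\!\bigl[r\bigl(\phi-E_{\nu_{\k,\ell_0}}[\phi]\bigr)\bigr]\right].
\end{equation*}

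Next, I would apply Cauchy--Schwarz on each canonical fiber to get
\begin{equation*}
\bigl|E_{\nu_{\k,\ell_0}}[r(\phi-E_{\nu_{\k,\ell_0}}[\phi])]\bigr|
\;\le\; \|r\|_{L^{2}(\nu_{\k,\ell_0})}\,{\rm Var}_{\nu_{\k,\ell_0}}(\phi)^{1/2},
\end{equation*}
and then invoke the Poincar\'e inequality ${\rm Var}_{\nu_{\k,\ell_0}}(\phi)\le W(\k,\ell_0)\,D_{\k,\ell_0}(\phi)$ recalled in Subsection \ref{specgap_section}. Here is the one point requiring a small check: since $r$ is supported on $\Lambda_{\ell_0}$ and $\phi$ may depend on further coordinates, I first replace $\phi$ by $E_{\nu_{\BF a_0}}[\phi\,|\,\BF\alpha\!\!\restriction_{\Lambda_{\ell_0}}]$ inside $E_{\nu_{\k,\ell_0}}[r\phi]$ (this is the conditional expectation onto the sites in $\Lambda_{\ell_0}$), which is legitimate because $r$ depends only on those coordinates, and the conditioning does not increase the Dirichlet form on $\Lambda_{\ell_0}$.

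The final step is to combine the three factors via H\"older with exponents $(4,4,2)$ after integrating against $\nu_{\BF a_0}$:
\begin{equation*}
|E_{\nu_{\BF a_0}}[r\phi]|
\;\le\; E_{\nu_{\BF a_0}}\!\bigl[\|r\|_{L^{2}(\nu_{\k,\ell_0})}^{4}\bigr]^{1/4}\;
E_{\nu_{\BF a_0}}\!\bigl[W(\k,\ell_0)^{2}\bigr]^{1/4}\;
E_{\nu_{\BF a_0}}\!\bigl[D_{\k,\ell_0}(\phi)\bigr]^{1/2}.
\end{equation*}
The first factor is controlled using $\|\cdot\|_{L^{2}}\le\|\cdot\|_{L^{4}}$ on each fiber and the tower property, giving $\|r\|_{L^{4}(\nu_{\BF a_0})}$; the third factor is exactly $D_{\nu_{\BF a_0},\ell_0}(\phi)^{1/2}$ since averaging the canonical Dirichlet form over the fiber distribution reproduces the grand-canonical Dirichlet form on $\Lambda_{\ell_0}$. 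This gives the stated bound. I do not anticipate any real obstacle; the one place that merits care is the reduction to $\phi$ restricted to $\Lambda_{\ell_0}$ so that the Poincar\'e inequality on the canonical ensemble can be applied cleanly.
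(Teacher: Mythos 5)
Your proof is correct and reaches the same fiber-level estimate as the paper, but by a genuinely different argument. The paper proves the bound $\big|E_{\nu_{\k,\ell_0}}[r\phi]\big| \leq W(\k,\ell_0)^{1/2}\|r\|_{L^2(\nu_{\k,\ell_0})}D_{\k,\ell_0}^{1/2}(\phi)$ by solving the Poisson equation $r_\k = -S_{\k,\ell_0}u$ and using the self-adjoint square root of $-S_{\k,\ell_0}$: Cauchy--Schwarz gives $E[(-Su)\phi]\le E[u(-Su)]^{1/2}D_{\k,\ell_0}(\phi)^{1/2}$, and then $E[u(-Su)]=E[ru]\le W\,E[r^2]$ since $(-S)^{-1}\preceq W$ on the orthogonal complement of constants. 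You instead apply Cauchy--Schwarz directly to $E[r(\phi - E[\phi])]$ and then the Poincar\'e inequality to $\mathrm{Var}_{\nu_{\k,\ell_0}}(\phi)$; these are the two dual proofs of the same $H^{-1}$ estimate. A consequence is that the paper never needs to project $\phi$ onto $\sigma(\BF\alpha\restriction_{\Lambda_{\ell_0}})$, because its spectral-gap step is applied to $u$, which is automatically a function of the $\Lambda_{\ell_0}$-coordinates; your route, by contrast, genuinely requires the preliminary replacement $\phi\mapsto E_{\nu_{\ab}}[\phi\,|\,\BF\alpha\restriction_{\Lambda_{\ell_0}}]$ so that the Poincar\'e inequality (whose right-hand side only involves jumps inside $\Lambda_{\ell_0}$) controls the full variance. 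You correctly flag this and justify it by noting that conditional expectation does not increase $D_{\nu_{\ab},\ell_0}$ (Jensen applied to each bond term), and that it leaves $E_{\nu_{\ab}}[r\phi]$ unchanged since $r$ is $\Lambda_{\ell_0}$-measurable. The final H\"older step with exponents $(4,4,2)$, together with $E_{\nu_{\ab}}[D_{\k_0,\ell_0}(\phi)]=D_{\nu_{\ab},\ell_0}(\phi)$ and Jensen's inequality $E_{\nu_{\k,\ell_0}}[r^2]^2\le E_{\nu_{\k,\ell_0}}[r^4]$, is the same as the paper's.
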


\begin{proof}
Recall the notation for the canonical process in Subsection \ref{specgap_section}.  
Since
 \begin{align*}
 E_{\nu_{\BF a_0}} \Big[r \; \Big| \; \sum_{|x|\leq \ell_0}\a(x)=\k \Big]=E_{\nu_{\k,\ell_0}}[r]=0,
 \end{align*}
 the restriction $r_\k$ of the function $r$ to ${\mathcal G}_{\k,\ell_0}=\{\BF \alpha: \sum_{x\in \Lambda_{\ell_0}}\BF \alpha (x) = \k\}$ is orthogonal to constant functions.  Hence, $r_\k$ belongs to the range of $-S_{\k,\ell_0}$, and the equality $r_\k = -S_{\k,\ell_0} u$ holds for a function $u$ on ${\mathcal G}_{\k,\ell_0}$.
Consider the random variable $\k_0= \sum_{x\in \Lambda_{\ell_0}}\BF \alpha (x)$ and write
\begin{eqnarray*}
\big|E_{\nu_{\BF a_0}}[r\phi]\big| &=& \big|E_{\nu_{\BF a_0}}\big[ E_{\nu_{\BF a_0}}[r\phi|\k_0 ]\big]\big| =  \big|E_{\nu_{\BF a_0}}\big[ E_{\nu_{\k_0, \ell_0}}[r\phi] \big]\big| = \big|E_{\nu_{\BF a_0}}\big[ E_{\nu_{\k_0, \ell_0}}[r_{\k_0} \phi] \big]\big|  \\
&=&\big|E_{\nu_{\BF a_0}}\big[ E_{\nu_{\k_0, \ell_0}}[ (-S_{\k_0,\ell_0} u) \phi ]\big ]\big|\\
&\leq & E_{\nu_{\BF a_0}}\left[ E_{\nu_{\k_0, \ell_0}}\Big[u (-S_{\k_0,\ell_0} u) \Big]^{1/2}\;   E_{\nu_{\k_0, \ell_0}}\Big[\phi (-S_{\k_0,\ell_0} \phi)\Big]^{1/2}\right].
\end{eqnarray*}
To obtain the last line, as, for any $\k$, $-S_{\k,\G}$ is a nonnegative symmetric operator, we used that it has a square root. Since $W(\k,\ell_0)$ is the reciprocal of the spectral gap for $-S_{\k,\ell_0}$, we have, for any $\k$, that
$$E_{\nu_{\k, \ell_0}}\big[r_\k u \big] \ \leq \ W(\k,\ell_0)\; E_{\nu_{\k, \ell_0} }\big[r_{\k}^2\big].$$
Inputting this estimate into the previous display, we obtain
\begin{eqnarray*}
\big|E_{\nu_\ab}[r\phi]\big|&\leq & E_{\nu_\ab}\Big[ [W (\k_0, \ell_0) ]^{1/2} \; E_{\nu_{\k_0, \ell_0}}\big[r_{\k_0}^2 \big]^{1/2} \; D^{1/2}_{\k_0,\ell_0}(\phi)\Big].
\end{eqnarray*}
We now use Schwarz inequality and the fact that
$$E_{\nu_\ab} \Big[ D_{\k_0, \ell_0} (\phi) \Big] = D_{\nu_\ab, \ell_0} (\phi)$$
to finish the proof.
\end{proof}

To simplify notation, for the rest of the section, we will drop the superscript `$N$' and write $\BF \alpha^N = \BF \alpha$.
  
 We now state a useful estimate on the variance of additive functionals, which reduces a dynamic estimate to a static $H_{-1,N}$-estimate.  A proof is found in Proposition 4.3 in \cite{S_longrange}, with a straightforward change of notation; see also
  Appendix 1.6 in \cite{KL} for a similar estimate.  Recall the translation $\tau_z\a$ given by $\tau_z\BF \alpha (x) = \BF \alpha (x+z)$.  For $c\in {\mathbb R}$ and ${R} \in L^2(\nu_\ab)$, let $r: \T_N\times \Omega \rightarrow \R$ be a function such that 
$\|r(z,\cdot)\|_{-1,N} \leq \|{R}\|_{-1,N}$ for each $z\in \T_N$.

\begin{prop}
 \label{KV}
Let $R:\Omega^n \to \mathbb R$ be a mean-zero $L^2(\nu_\ab)$ function, $\tilde{R}(\ab)=0$.  Then, there exists a universal constant $C$ such that
\begin{equation*}
 \mathbb{E}_{\nu_\ab}\Big[\sup_{0\leq t\leq T}\Big(\int_0^t r(\lfloor cs\rfloor, {\BF \alpha}_s) ds \Big)^2\Big] \ \leq \ C T \|R\|_{-1,N}^2.
\end{equation*}
\end{prop}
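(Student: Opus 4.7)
The plan is to extend the Kipnis-Varadhan variance bound for stationary non-reversible Markov processes to a time-dependent integrand. For a time-independent mean-zero $f\in L^2(\nu_\ab)$, the classical KV bound reads $E_{\nu_\ab}\big[\sup_{t\leq T}(\int_0^t f(\a_s) ds)^2\big]\leq CT\|f\|_{-1,N}^2$. Our integrand $r(\lfloor cs\rfloor, \a_s)$ is piecewise constant in $s$, taking value $r_k := r(k,\cdot)$ on $[k/|c|,(k+1)/|c|)$, with each slice satisfying $\|r_k\|_{-1,N}\leq \|R\|_{-1,N}$; the task is to keep the final bound linear in $T$ and in $\|R\|_{-1,N}^2$.

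The core is a Dynkin decomposition with a resolvent ansatz. For a parameter $\lambda>0$, let $u_k\in L^2(\nu_\ab)$ solve $(\lambda - L_N)u_k = r_k$; this is well defined because the finiteness of $\|r_k\|_{-1,N}$ forces $E_{\nu_\ab}[r_k]=0$ (constants have vanishing $H_{1,N}$-seminorm), and the usual variational identity gives $\lambda \|u_k\|_{L^2(\nu_\ab)}^2 + \|u_k\|_{1,N}^2 \leq \|r_k\|_{-1,N}^2 \leq \|R\|_{-1,N}^2$. Applying Dynkin's formula to the c\`adl\`ag function $U(s,\a) := u_{\lfloor cs\rfloor}(\a)$, which jumps at $s_j = j/|c|$, and substituting $L_N u_k = \lambda u_k - r_k$, I obtain
\begin{align*}
\int_0^t r(\lfloor cs\rfloor,\a_s)\,ds = U(0,\a_0) - U(t,\a_t) + \lambda\!\int_0^t U(s,\a_s)\,ds + M_t + J_t,
\end{align*}
where $M_t$ is a martingale whose quadratic variation is controlled in stationary expectation by $2\int_0^t E_{\nu_\ab}[u_{\lfloor cs\rfloor}(-S_N) u_{\lfloor cs\rfloor}]\,ds$, and $J_t = \sum_{j:s_j\leq t}[u_j(\a_{s_j}) - u_{j-1}(\a_{s_j})]$ records the time-discontinuities of $U$.

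Each term on the right is then squared, supped in $t\in[0,T]$, and integrated under $\nu_\ab$. Stationarity controls the boundary contribution by $E[U(t,\a_t)^2]\leq \lambda^{-1}\|R\|_{-1,N}^2$; Cauchy-Schwarz on the drift produces $\lambda T^2\|R\|_{-1,N}^2$; Doob's $L^2$ inequality combined with the carr\'e du champ bound delivers $E[\sup_t M_t^2]\leq CT\|R\|_{-1,N}^2$. Choosing $\lambda=1/T$ balances the boundary and drift contributions, each then at $O(T\|R\|_{-1,N}^2)$, and a standard Garsia-Rodemich-Rumsey-type inequality upgrades the pointwise-in-$t$ estimate on $U(t,\a_t)$ to its supremum.

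The main obstacle is the jump sum $J_t$, which has no analog in the classical time-independent KV argument: with $\sim|c|T$ nonzero terms, naive variance bounds do not close. The resolution is to view $J_\cdot$ as a martingale with respect to the filtration at the deterministic jump times $s_j$ (each summand is mean-zero by stationarity) and to exploit that $u_j - u_{j-1}$ solves the resolvent equation with right-hand side $r_j - r_{j-1}$, so that its $L^2$-norm is controlled by the $H_{-1,N}$-norm of the difference rather than by individual norms, producing the necessary cancellation to keep the total at $O(T\|R\|_{-1,N}^2)$. This is the technical core of Proposition 4.3 in \cite{S_longrange}, whose argument adapts essentially unchanged to the multi-species setting here.
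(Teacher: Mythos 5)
The paper gives no proof of this proposition, deferring entirely to Proposition 4.3 of \cite{S_longrange} (with \cite{KL}, Appendix 1.6 as a related reference). Your resolvent decomposition is a natural starting point, and your control of the boundary, drift, and martingale terms is standard and correct. But your treatment of the jump term $J_t$ --- which you correctly identify as the crux --- contains a genuine error and does not close the argument.

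First, $J_\cdot$ is \emph{not} a martingale in the filtration at the deterministic jump times. Each increment $J_{s_j}-J_{s_{j-1}}=(u_j-u_{j-1})(\alpha_{s_j})$ has zero \emph{unconditional} mean (since $E_{\nu_\ab}[u_k]=0$ for every $k$, by invariance of $\nu_\ab$ under $P_t$), but the conditional mean is $E[(u_j-u_{j-1})(\alpha_{s_j})\mid\mathcal F_{s_{j-1}}]=P_{1/|c|}(u_j-u_{j-1})(\alpha_{s_{j-1}})$, which is generically nonzero. Stationarity gives $E[\,\cdot\,]=0$, not $E[\,\cdot\mid\mathcal F_{s_{j-1}}]=0$, and the two are being conflated. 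Second, even granting a martingale structure, the "cancellation" you invoke is not substantiated by the estimates you list: the resolvent identity gives $\|u_j-u_{j-1}\|_{L^2(\nu_\ab)}\le\lambda^{-1/2}\|r_j-r_{j-1}\|_{-1,N}$, but in the abstract setting of the proposition nothing forces $\|r_j-r_{j-1}\|_{-1,N}$ to be small --- the only a priori bound is $2\|R\|_{-1,N}$. With $\lambda=1/T$ each jump then contributes $\sim T\|R\|_{-1,N}^2$ to the second moment, and with $\sim |c|T$ jumps even a hypothetical orthogonality of increments would yield $E[J_T^2]\lesssim |c|T^2\|R\|_{-1,N}^2$, overshooting the target $CT\|R\|_{-1,N}^2$ by a factor of $|c|T$. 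So the mechanism you describe cannot produce the stated bound; the actual technical device in \cite{S_longrange} (to which you are right to point, but which you have not reproduced) must be of a different nature. A secondary caveat: your appeal to a Garsia--Rodemich--Rumsey upgrade for the boundary term $U(t,\alpha_t)$ would need a modulus-of-continuity estimate with a supercritical exponent that the $H_{-1,N}$ input does not obviously supply; the standard route (e.g., the forward-backward martingale decomposition) handles this supremum differently.
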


\medskip

Recall the definition of $\bar h_{c,s}$, for $c\in {\mathbb R}$ and $s\in [0,T]$, above the statement of Theorem \ref{gbg_L2}.

\begin{lem}
\label{globalone_block}
Let $f:\Omega^n \to \mathbb R$ be a local $L^4(\nu_\ab)$ function
supported on sites in $\Lambda_{\ell_0}$ such that
 $\tilde{f}(\ab)=0$. Then, there exists a constant $C=C(\ab)$ such that, for $\ell \geq \ell_0$, and $h:\T_N\rightarrow{\mathbb R}$,
\begin{eqnarray*}
&&\E_{\nu_\ab}\left[\sup_{0\leq t\leq T}\left(\int_0^t \sum_{x\in\T_N}\bar h_{c,s}(x)\Big\{f(\tau_x \BF \alpha_s)-\E_{\nu_{\ab}}[f(\tau_x\BF \alpha_s)|\BF \alpha_s^{(\ell)}(x)]\Big\} ds \right)^2\right] \\
&&\ \ \ \ \ \ \ \ \ \ \  \leq \ CT\frac{\ell^{3}}{N^2} \|f\|^2_{L^4(\nu_\ab)}\sum_{x\in{\T_N}}h^2(x).
\end{eqnarray*}
\end{lem}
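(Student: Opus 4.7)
The plan is to reduce this dynamic variance bound to a static $H_{-1,N}$ estimate via Proposition \ref{KV}, and then to control that norm using Proposition \ref{crucial estimate} together with the spectral-gap input (SG).

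First I would introduce the centered function $F(\BF\alpha) := f(\BF\alpha) - E_{\nu_\ab}[f(\BF\alpha)\,|\,\BF\alpha^{(\ell)}(0)]$, supported on $\Lambda_\ell$, satisfying $\tilde F(\ab)=0$, and killing conditional expectations at scale $\ell$: $E_{\nu_\ab}[F\,|\,\BF\alpha^{(\ell)}(0)]=0$. Translation invariance of $\nu_\ab$ rewrites the integrand of the lemma as $R(\tau_{\lfloor cs\rfloor}\BF\alpha_s)$, where
\begin{equation*}
R(\BF\alpha)\;:=\;\sum_{y\in\T_N}h(y)\,F(\tau_y\BF\alpha).
\end{equation*}
Setting $r(z,\BF\alpha):=R(\tau_z\BF\alpha)$, translation invariance of $\nu_\ab$ and of the Dirichlet form gives $\|r(z,\cdot)\|_{-1,N}=\|R\|_{-1,N}$ for every $z$, so that Proposition \ref{KV} reduces the task to the static bound $\|R\|_{-1,N}^2 \le C\,\ell^3 N^{-2}\|f\|_{L^4(\nu_\ab)}^2\sum_x h(x)^2$.

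The second step is to bound $\|R\|_{-1,N}$ by estimating $E_{\nu_\ab}[R\phi]$ for an arbitrary local bounded $\phi$ with $\|\phi\|_{1,N}>0$. Expanding and applying translation invariance gives $E_{\nu_\ab}[R\phi]=\sum_y h(y)\,E_{\nu_\ab}[F\cdot\phi(\tau_{-y}\cdot)]$. Since $F$ is supported on $\Lambda_\ell$ and vanishes under the conditional expectation at scale $\ell$, Proposition \ref{crucial estimate} (invoked with support parameter $\ell$) yields, for each $y$,
\begin{equation*}
\bigl|E_{\nu_\ab}[F\cdot\phi(\tau_{-y}\cdot)]\bigr|\;\le\;E_{\nu_\ab}\bigl[W(\textstyle\sum_{x\in\Lambda_\ell}\BF\alpha(x),\ell)^2\bigr]^{1/4}\,\|F\|_{L^4(\nu_\ab)}\,D_{\nu_\ab,y+\Lambda_\ell}^{1/2}(\phi),
\end{equation*}
after identifying $D_{\nu_\ab,\ell}(\phi(\tau_{-y}\cdot))=D_{\nu_\ab,y+\Lambda_\ell}(\phi)$ via translation invariance in both $\nu_\ab$ and the rates $g_i$. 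Assumption (SG) gives $E_{\nu_\ab}[W^2]^{1/4}\le C\ell$, and conditional Jensen gives $\|F\|_{L^4(\nu_\ab)}\le 2\|f\|_{L^4(\nu_\ab)}$.

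Finally I would sum over $y$ by Cauchy-Schwarz and invoke the bond-counting bound $\sum_y D_{\nu_\ab,y+\Lambda_\ell}(\phi)\le C\ell\,D_{\nu_\ab}(\phi)=C\ell N^{-2}\|\phi\|_{1,N}^2$, valid because each nearest-neighbor bond lies in at most $2\ell+1$ translates $y+\Lambda_\ell$. This yields $\|R\|_{-1,N}\le C\ell^{3/2}N^{-1}\|f\|_{L^4(\nu_\ab)}(\sum_x h(x)^2)^{1/2}$, which combined with Proposition \ref{KV} produces the lemma. The main obstacle, and the place where care is needed, is securing the $\ell^3$ power rather than $\ell^4$: the naive bound $D_{\nu_\ab,y+\Lambda_\ell}^{1/2}(\phi)\le D_{\nu_\ab}^{1/2}(\phi)$ combined with $\sum_y|h(y)|\le N^{1/2}(\sum_x h(x)^2)^{1/2}$ would waste a factor of $N/\ell$, so the Cauchy-Schwarz-plus-bond-counting step is essential; combined with the sharp $\ell^2$ spectral-gap scaling from (SG), it produces the desired $\ell^3$.
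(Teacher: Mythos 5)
Your proposal is correct and follows essentially the same route as the paper's own proof: reduce to a static $H_{-1,N}$ bound via Proposition~\ref{KV} using translation invariance, then estimate $E_{\nu_\ab}[R\phi]$ termwise by Proposition~\ref{crucial estimate} applied to $f-E_{\nu_\ab}[f\,|\,\BF\alpha^{(\ell)}]$ (supported on $\Lambda_\ell$ with vanishing conditional expectation), invoke (SG) to get the $\ell$ from $E[W^2]^{1/4}$, and close with Cauchy--Schwarz in $y$ plus the $(2\ell+1)$ bond-overcounting bound $\sum_y D_{\nu_\ab,\ell}(\phi(\tau_{-y}\cdot))\le(2\ell+1)D_{\nu_\ab}(\phi)$. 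The paper phrases the final step as an optimization $2ab=\inf_{\kappa>0}(\kappa a^2+\kappa^{-1}b^2)$, which is identical to your Cauchy--Schwarz; your remark about why the naive $\sum_y|h(y)|$ bound loses a factor $N/\ell$ is a correct and useful observation about the structure of the argument.
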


\noindent {\it Proof.}  
We may write the integrand function as follows:
$$\sum_{x\in \T_N} \bar h_{c,s}(x) \Big\{f(\tau_x\BF \alpha)-\E_{\nu_{\ab}}[f(\tau_x\BF \alpha) \, \big|\, \BF \alpha^{(\ell)}(x)]\Big\}  \ = \ r(\lfloor cs\rfloor, \BF \alpha)$$
where
$$r(z,\BF \alpha) \ := \ \sum_{x\in \T_N} h(x) \Big\{f(\tau_{z+x}\BF \alpha_s)-E_{\nu_{\ab}}[f(\tau_{z+x}\BF \alpha_s) \, \big| \, \BF \alpha_s^{(\ell)}(z+x)]\Big\}.$$
By translation invariance of $\nu_\ab$,
we have that $\|r(z,\cdot)\|_{-1,N} = \|r\|_{-1,N}$ where
$r(\BF \alpha) = r(0,\BF \alpha)$.
Hence, by Proposition \ref{KV}, we need only a sufficient estimate of the $H_{-1,N}$ norm of $r$ to finish. Using Proposition \ref{crucial estimate}, bound the $H_{-1,N}$ norm as follows:
\begin{equation}
\label{global_eq_1}
\begin{split}
&\Vert r \Vert_{-1,N} =\sup_\phi \left\{ \frac{E_{\nu_\ab} [r \phi] }{\Vert \phi\Vert_{1,N}} \right\}\\
&=
N^{-1}\  
\sup_\phi \left\{D^{-\frac{1}{2}}_{\nu_\ab}(\phi) \; E_{\nu_\ab}\Big[\sum_{x\in\T_N}h(x)\Big\{f(\tau_x \BF \alpha)-E_{\nu_{\ab}}[f(\tau_x \BF \alpha)|\BF \alpha^{(\ell)}(x)]\Big\}  \phi\Big]\right\}\\
& = 
N^{-1} \  
\sup_\phi\sum_{x\in \T_N}  D^{-\frac{1}{2}}_{\nu_\ab}(\phi) \; E_{\nu_\ab}\Big[h(x)  \big( f(\BF \alpha) - E_{\nu_\ab}[f(\BF \alpha)|\BF \alpha^{(\ell)}]\big) \phi(\tau_{-x}\BF \alpha)\Big]\\
& \leq 
N^{-1} \  \ 
\sup_\phi  D^{-\frac{1}{2}}_{\nu_\ab}(\phi)\sum_{x\in \T_N}  |h(x)| \; E_{\nu_\ab}\Big [ W\Big(\sum_{z\in \Lambda_\ell}\BF \alpha (z), \ell\Big)^2\Big]^{\frac{1}{4}} \; \|f\|_{L^4(\nu_\ab)} \;  D^{\frac{1}{2}}_{\nu_\ab,\ell}(\phi(\tau_{-x}\BF \alpha)).
\end{split}
\end{equation}
By translation-invariance of $\nu_\ab$, and counting the number of repetitions,
$$\sum_{x\in \T_N}  D_{\nu_\ab,\ell}(\phi(\tau_{-x}\BF \alpha)) \  \leq \ (2\ell +1)  D_{\nu_\ab}(\phi).$$
Then, by the spectral gap assumption (SG) in Subsection \ref{specgap_section}, and $2ab = \inf_{\kappa>0}[\kappa a^2 + \kappa^{-1}b^2]$, we bound \eqref{global_eq_1} by a constant times 
\begin{eqnarray*}
&&
N^{-1} \ 
\sup_\phi D^{-\frac{1}{2}}_{\nu_\ab}(\phi)\inf_{\kappa>0}\Big\{\kappa \ell^2\|f\|^2_{L^4(\nu_\ab)}\sum_{x\in \T_N} h^2(x) + \kappa^{-1} \ell D_{\nu_\ab}(\phi)\Big\}\\
&&\ \  \ \ \ \ \ \lesssim \left(\cfrac{\ell^{3}}{N^2}\,   \|f\|_{L^4(\nu_\ab)}^2\sum_{x\in \T_N} h^2(x)\right)^{\frac{1}{2}}. \quad \quad \quad \quad \quad \quad \quad \quad \quad \quad \quad \quad \quad \quad \quad \qed
\end{eqnarray*}

We actually use Lemma \ref{globalone_block} only for $\ell=\ell_0$.The size of the box in the conditional expectation is now doubled in the next lemma, which is used for the proof of Lemma \ref{globaltwo-blocks}. 

\begin{lem}
\label{globalrenormalization}
Let $f:\Omega^n \to \mathbb R$ be a local $L^5(\nu_\ab)$ function 
supported on sites in $\Lambda_{\ell_0}$ 
such that $\tilde f(\ab)=0$ and $\nabla \tilde f(\ab)=0$. There exists a constant $C=C(\ab, \ell_0,n)$ such that, for $\ell \geq \ell_0$, and 
$h:\T_N\rightarrow {\mathbb R}$,
\begin{eqnarray*}
&&\E_{\nu_\ab}\left[\sup_{0\leq t\leq T}\Big(\int_0^t\sum_{x\in{\T_N}} \bar h_{c,s}(x)\right.\\
&&\left. \quad\quad \quad \quad  \times \Big\{ \E_{\nu_{\ab}}[f(\tau_x\BF \alpha_s)|\BF \alpha_s^{(\ell)}(x)] -\E_{\nu_{\ab}}[f(\tau_x \BF \alpha_s)|\BF \alpha_s^{(2\ell)}(x)]\Big\} ds \Big)^2\right] \\
&&\ \ \ \ \ \ \ \ \ \ \ \ \ \leq \ CT\|f\|^2_{L^5(\nu_\ab)}\frac{\ell}{N^2}\sum_{x\in{\T_N}}h^2(x).
\end{eqnarray*}

On the other hand, when only $\tilde f(\ab)=0$ is known,
\begin{eqnarray*}
&&\E_{\nu_\ab}\Big[\sup_{0\leq t\leq T}\Big(\int_0^t\sum_{x\in{\T_N}} \bar h_{c,s}(x)\\
&&\ \ \ \ \ \ \ \ \ \ \ \ \ \ \ \ \ \ \ \ \ \ \ \ \ \ \  \times \Big\{E_{\nu_{\ab}}[f(\tau_x\BF \alpha_s)|\BF \alpha_s^{(\ell)}(x)]-E_{\nu_{\ab}}[f(\tau_x\BF \alpha_s)|\BF \alpha_s^{(2\ell)}(x)]\Big\} ds \Big)^2\Big] \\
&&\ \ \ \ \ \ \ \ \ \ \ \ \ \leq \ CT\|f\|^2_{L^5(\nu_\ab)}\frac{\ell^{2}}{N^2}\sum_{x\in{\T_N}}h^2(x).
\end{eqnarray*}
\end{lem}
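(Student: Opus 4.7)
The plan is to follow the same scheme as in Lemma \ref{globalone_block}: reduce the dynamic supremum estimate to a static $H_{-1,N}$-norm estimate via Proposition \ref{KV}, and then bound that $H_{-1,N}$-norm via Proposition \ref{crucial estimate} together with the equivalence of ensembles (Theorem \ref{2EE}) and the spectral-gap assumption (SG).

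First I would write the integrand as $r(\lfloor cs\rfloor,\BF\alpha_s)$ with
$$r(z,\BF\alpha) = \sum_{x\in\T_N} h(x)\, F(\tau_{z+x}\BF\alpha), \qquad F(\BF\alpha) := E_{\nu_\ab}[f(\BF\alpha)\,|\,\BF\alpha^{(\ell)}] - E_{\nu_\ab}[f(\BF\alpha)\,|\,\BF\alpha^{(2\ell)}].$$
By the tower property, $E_{\nu_\ab}[F\,|\,\BF\alpha^{(2\ell)}] = 0$, and $F$ is supported on $\Lambda_{2\ell}$. Hence Proposition \ref{crucial estimate} applies with $\ell_0$ replaced by $2\ell$, yielding
$$\big|E_{\nu_\ab}[F\,\phi]\big| \le E_{\nu_\ab}\Big[W\Big(\textstyle\sum_{z\in\Lambda_{2\ell}}\BF\alpha(z),2\ell\Big)^2\Big]^{1/4}\|F\|_{L^4(\nu_\ab)}\,D_{\nu_\ab,2\ell}^{1/2}(\phi).$$
Assumption (SG) controls the $W$-factor by $C\ell$.

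Next I would estimate $\|F\|_{L^4(\nu_\ab)}$ by Theorem \ref{2EE}. When $\nabla\tilde f(\ab)=0$, applying the first part of Theorem \ref{2EE} at scales $\ell$ and $2\ell$ shows $F$ equals a quadratic polynomial in the block averages plus an error of order $\ell^{-3/2}$; since $\|\alpha^{j,(\ell)}-a_0^j\|_{L^4(\nu_\ab)}=O(\ell^{-1/2})$, the quadratic expression itself has $L^4$-norm $O(\ell^{-1})$, and so $\|F\|_{L^4}\le C\ell^{-1}\|f\|_{L^5(\nu_\ab)}$. When only $\tilde f(\ab)=0$, the second part of Theorem \ref{2EE} gives the linear approximation $\sum_j\partial_{a^j}\tilde f(\ab)(\alpha^{j,(\ell)}-a_0^j)$ with error $O(\ell^{-1})$; taking the difference at scales $\ell$ and $2\ell$ leaves a linear expression of $L^4$-size $O(\ell^{-1/2})$, giving $\|F\|_{L^4}\le C\ell^{-1/2}\|f\|_{L^5(\nu_\ab)}$.

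Then, imitating the computation (\ref{global_eq_1}) of Lemma \ref{globalone_block}, I would write
$$\|r\|_{-1,N} \le N^{-1}\sup_\phi D_{\nu_\ab}^{-1/2}(\phi)\sum_{x\in\T_N}|h(x)|\cdot C\ell\,\|F\|_{L^4}\,D_{\nu_\ab,2\ell}^{1/2}(\phi(\tau_{-x}\cdot)),$$
use translation invariance to see $\sum_x D_{\nu_\ab,2\ell}(\phi(\tau_{-x}\cdot))\le(4\ell+1)D_{\nu_\ab}(\phi)$, then apply Schwarz together with the $2ab=\inf_\kappa\{\kappa a^2+\kappa^{-1}b^2\}$ trick exactly as in the proof of Lemma \ref{globalone_block}. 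This produces
$$\|r\|_{-1,N}^2 \;\lesssim\; N^{-2}\,\ell\cdot\ell^2\,\|F\|_{L^4}^2\sum_x h^2(x),$$
which combined with $\|F\|_{L^4}^2=O(\ell^{-2})$ or $O(\ell^{-1})$ gives the two bounds $N^{-2}\ell\sum h^2$ and $N^{-2}\ell^2\sum h^2$ respectively. A final application of Proposition \ref{KV} converts these static estimates into the dynamic supremum bounds claimed.

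The main obstacle I anticipate is bookkeeping in the equivalence of ensembles step: one has to verify that the quadratic (respectively linear) leading terms in the two Taylor-type expansions at scales $\ell$ and $2\ell$ really do combine to give $L^4$-size $\ell^{-1}$ (respectively $\ell^{-1/2}$), and that the additive constants $\Gamma_{jk}(\ab)/(2\ell+1)$ and $\Gamma_{jk}(\ab)/(4\ell+1)$, which do not cancel, contribute only lower-order terms. Everything else is a transcription of the argument already carried out in Lemma \ref{globalone_block}, with $\ell_0$ replaced by $2\ell$.
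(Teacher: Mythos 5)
Your proposal is correct and follows essentially the same route as the paper: use the tower property (via the product structure of $\nu_{\ab}$ and the sigma-fields $\sigma(\BF\alpha^{(m)},\BF\alpha^c_m)$) to get $E_{\nu_\ab}[F\,|\,\BF\alpha^{(2\ell)}]=0$, apply Proposition~\ref{crucial estimate} and (SG) at scale $2\ell$, bound $\|F\|_{L^4(\nu_\ab)}$ via Theorem~\ref{2EE} at scales $\ell$ and $2\ell$ to get $O(\|f\|_{L^5}\ell^{-1})$ (resp.\ $O(\|f\|_{L^5}\ell^{-1/2})$), and finish with Proposition~\ref{KV}. The paper packages the middle step as ``repeat Lemma~\ref{globalone_block} with $\|f\|^2_{L^4}$ replaced by the variance estimate,'' which is exactly the bookkeeping you carry out explicitly.
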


\begin{proof}  We prove only the first statement since the second has a similar argument.  Define the sigma-field $\F_\ell = \sigma\{\BF \alpha^{(\ell)}, \BF \alpha^c_\ell\}$ where $\BF \alpha^c_\ell = \{\BF \alpha (x): x\not\in \Lambda_\ell\}$ for $\ell\geq 1$.  Since $f$ is supported on sites $\Lambda_{\ell_0}$, and $\nu_\ab$ is a product measure, we have that
\begin{equation*}
E_{\nu_\ab}\big[f(\BF \alpha)\big|\BF \alpha^{(m)}\big] =E_{\nu_\ab}\big[f(\BF \alpha)\big|\BF \alpha^{(m)}, \BF \alpha^c_m\big]
\end{equation*}
and since $\F_{2\ell}\subset \F_\ell$, $\ell\geq \ell_0$ we get that
\begin{eqnarray*}
E_{\nu_\ab}\Big[E_{\nu_\ab}\big[f(\BF \alpha)\big|\BF \alpha^{(\ell)}\big]\big|\BF \alpha^{(2\ell)}\Big] &=&
E_{\nu_\ab}\left[E_{\nu_\ab}\big[f(\BF \alpha)\big|\BF \alpha^{(\ell)}, \BF \alpha^c_\ell\big]\, \Big|\, \BF \alpha^{(2\ell)}, \BF \alpha^c_{2\ell}\right] \\
& = & E_{\nu_\ab}\big[f(\BF \alpha)\big| \BF \alpha^{(2\ell)}, \BF \alpha^c_{2\ell}\big]\ = \ 
E_{\nu_\ab}\big[f(\BF \alpha)\big| \BF \alpha^{(2\ell)}\big].\end{eqnarray*}

We now follow similar steps as in the proof of Lemma \ref{globalone_block} to the last line, with 
$$r(z,\BF \alpha) \ = \ \sum_{x\in \T_N} h(x)  \Big\{E_{\nu_{\ab}}[f(\tau_{z+x}\BF \alpha)|\BF \alpha^{(\ell)}(z+x)]-E_{\nu_{\ab}}[f(\tau_{z+x}\BF \alpha)|\alpha^{(2\ell)}(z+x)]\Big\}$$
and $r(\BF \alpha)=r(0,\BF \alpha)$.
To finish, we claim that the following variance is bounded as 
$$
\Big\|E_{\nu_{\ab}}[f(\BF \alpha)|\BF \alpha^{(\ell)}]-E_{\nu_{\ab}}[f(\BF \alpha)|\BF \alpha^{(2\ell)}]\Big\|_{L^4(\nu_\ab)}^2 \ \lesssim \|f\|^2_{L^5(\nu_\ab)}\ell^{-2}.
$$
Indeed, by the inequality $(a+b+c)^2 \leq 3a^2+3b^2 +3c^2$, the variance is bounded by
\begin{eqnarray*}
&& 3\Big\|E_{\nu_{\ab}}\Big[f(\BF \alpha) - \tfrac{1}{2}\sum_{j,k}\partial_{a^j}\partial_{a^k}\tilde f(\ab)\Big\{(\alpha^{j,(\ell)} - a^j_0)(\alpha^{k,(\ell)}-a^k_0)\\
&&\ \ \ \ \ \ \ \ \ \ \ \ \ \ \ \ \ \ \ \ \ \ \ \ \ \ \ \ \ \  -\tfrac{\Gamma_{j,k} (\ab) }{2\ell +1}\Big\}\Big|\a^{(\ell)}\Big]\Big\|^2_{L^4(\nu_\ab)}\\
&&   +3\Big\|E_{\nu_{\ab}}\Big[f(\a)
 - \frac{1}{2}\sum_{j,k}\partial_{a^j}\partial_{a^k}\tilde f(\ab) \Big\{(\alpha^{j,(2\ell)} - a^j_0)(\alpha^{k,(2\ell)}-a^k_0) \\
 &&\ \ \ \ \ \ \ \ \ \ \ \ \ \ \ \ \ \ \ \ \ \ \ \ \ \ \ \ \ \ -\frac{\Gamma_{j,k} (\ab)}{2(2\ell)+1}\Big\}\Big|\BF \alpha^{(2\ell)}\Big]\Big\|^2_{L^4(\nu_\ab)}\\
&&  + 3\Big\|\tfrac{1}{2}\sum_{j,k}\partial_{a^j}\partial_{a^k}\tilde f(\ab)\Big\{E_{\nu_\ab}\Big[(\alpha^{j,(\ell)} - a^j_0)(\alpha^{k,(\ell)}-a^k_0) -\tfrac{\Gamma_{j,k} (\ab)}{2\ell +1}\Big|\BF \alpha^{(\ell)}\Big] \\
&&\ \ \ \ \ \ \ \ \ \ \ \ \ \ \ \ \
+ E_{\nu_\ab}\Big[(\alpha^{j,(2\ell)} - a^j_0)(\alpha^{k,(2\ell)}-a^k_0) -\tfrac{\Gamma_{j,k} (\ab)}{2(2\ell)+1}\Big|\BF \alpha^{(2\ell)}\Big]\Big\}
\Big\|_{L^4(\nu_\ab)}^2.
\end{eqnarray*}
The first two terms are bounded by Theorem \ref{2EE} of order $O(\|f\|^2_{L^5(\nu_\ab)}\ell^{-3})$.  However, the last term, by a fourth moment bound of $(\alpha^{q,(m)}-a^q_0)^2$ with variously $m=\ell$ and $m=2\ell$ and using that $|\partial_{a^j}\partial_{a^k}\tilde f(\ab)| \lesssim \|f\|_{L^2(\nu_\ab)}$  is of order $O(\|f\|^2_{L^2(\nu_\ab)}\ell^{-2})$.

Combining the two estimates, we obtain that the variance is of order $O(\|f\|^2_{L^5(\nu_\ab)}\ell^{-2})$ as desired.  Replacing now $\|f\|^2_{L^4(\nu_\ab)}$ in the last line of Lemma \ref{globalone_block} by this variance estimate, one recovers the bound in the first statement.
 \end{proof}

The next lemma replaces $\ell_0$ with $\ell \ge \ell_0$ and Lemma \ref{EE_1block} will apply for large $\ell$. 

\begin{lem}
\label{globaltwo-blocks}
Let $f:\Omega^n \to \mathbb R$ be a local $L^5(\nu_\ab)$ function 
supported on sites in $\Lambda_{\ell_0}$ 
such that $\tilde f(\ab)=0$ and $\nabla \tilde f(\ab)=0$.  Then, there exists a constant $C = C(\ab,\alpha,\ell_0,n)$ such that, for $\ell \geq  \ell_0$,  and 
$h:\T_N\rightarrow {\mathbb R}$,
\begin{eqnarray*}
&&\E_{\nu_\ab}\Big[\sup_{0\leq t\leq T}\Big(\int_0^t\sum_{x\in{\T_N}}\bar h_{c,s}(x)\Big\{E_{\nu_{\ab}}[f(\tau_x\BF \alpha)|\BF \alpha^{(\ell_0)}(x)]
-E_{\nu_{\ab}}[f(\tau_x\BF \alpha)|\BF \alpha^{(\ell)}(x)]\Big\} ds \Big)^2\Big] \\
&&\ \ \ \ \ \ \ \ \ \ \ \ \ \leq CT\|f\|^2_{L^5(\nu_\ab)}\frac{\ell}{N^2}\sum_{x\in{\T_N}}h^2(x).
\end{eqnarray*}
On the other hand, when only $\tilde f(\ab)=0$ is known,
\begin{eqnarray*}
&&\E_{\nu_\ab}\Big[\sup_{0\leq t\leq T}\Big(\int_0^t\sum_{x\in{\T_N}}\bar h_{c,s}(x)\Big\{E_{\nu_{\ab}}[f(\tau_x\BF \alpha)|\BF \alpha^{(\ell_0)}(x)]-E_{\nu_{\ab}}[f(\tau_x\BF \alpha)|\BF \alpha^{(\ell)}(x)]\Big\} ds \Big)^2\Big] \\
&&\ \ \ \ \ \ \ \ \ \ \ \ \ \leq CT\|f\|^2_{L^5(\nu_\ab)}\frac{\ell^2}{N^2}\sum_{x\in{\T_N}}h^2(x).
\end{eqnarray*}
\end{lem}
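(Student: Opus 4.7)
My plan is to prove the lemma by iterating Lemma \ref{globalrenormalization} along dyadic scales. First I would let $K\geq 1$ be the smallest integer with $\ell_0 2^K\geq \ell$, set $m_k = \ell_0 2^k$ for $0\leq k\leq K-1$, and $m_K=\ell$, so that the scales are geometric with ratio $2$ up to step $K-1$ and then a terminal step of ratio $m_K/m_{K-1}\in(1,2]$. Using the telescoping identity
$$E_{\nu_\ab}[f(\tau_x\BF \alpha)|\BF \alpha^{(\ell_0)}(x)]-E_{\nu_\ab}[f(\tau_x\BF \alpha)|\BF \alpha^{(\ell)}(x)] = \sum_{k=0}^{K-1} \Delta_k(\tau_x\BF \alpha),$$
with $\Delta_k(\BF \alpha):=E_{\nu_\ab}[f(\BF \alpha)|\BF \alpha^{(m_k)}]-E_{\nu_\ab}[f(\BF \alpha)|\BF \alpha^{(m_{k+1})}]$, together with Minkowski's inequality inside the $L^2(\nu_\ab)$ norm of the supremum, the problem reduces to bounding $\big(\sum_{k=0}^{K-1} A_k\big)^2$, where $A_k^2$ denotes the left-hand side of the claimed estimate with $\Delta_k$ in place of the conditional-expectation difference.

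Next I would apply Lemma \ref{globalrenormalization} to each term. For $0\leq k\leq K-2$ one has the exact doubling $m_{k+1}=2m_k$, so the lemma yields $A_k^2 \leq CT\|f\|^2_{L^5(\nu_\ab)}(m_k/N^2)\sum_{x\in \T_N} h^2(x)$ under the hypothesis $\nabla\tilde f(\ab)=0$, and with $m_k^2/N^2$ in place of $m_k/N^2$ when only $\tilde f(\ab)=0$ is assumed. For the terminal index $k=K-1$ the ratio $m_K/m_{K-1}$ may be less than $2$, but this is a cosmetic issue: inspecting the proof of Lemma \ref{globalrenormalization}, one sees that only two facts are used, namely (i) $\Delta_{K-1}$ is supported on $\Lambda_{m_K}$ and satisfies $E_{\nu_\ab}[\Delta_{K-1}|\BF \alpha^{(m_K)}]=0$, and (ii) the equivalence-of-ensembles estimate from Theorem \ref{2EE}, which gives $\|\Delta_{K-1}\|^2_{L^4(\nu_\ab)}\lesssim \|f\|^2_{L^5(\nu_\ab)}/m_{K-1}^2 \asymp \|f\|^2_{L^5(\nu_\ab)}/m_K^2$. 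Both assertions remain valid for arbitrary ratios in $(1,2]$, so the same bound holds for $A_{K-1}$ with $m_K$ in place of $m_{K-1}$.

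Finally I would sum the resulting geometric series. Under $\nabla\tilde f(\ab)=0$, Minkowski's inequality gives
$$\sum_{k=0}^{K-1} A_k \;\leq\; C\sqrt{T\|f\|^2_{L^5(\nu_\ab)}/N^2\cdot \sum_{x\in \T_N} h^2(x)} \;\cdot\; \sum_{k=0}^{K-1}\sqrt{m_k},$$
and because $m_k$ grows geometrically in $k$ and terminates at $m_K=\ell$, one has $\sum_{k=0}^{K-1}\sqrt{m_k}\leq C\sqrt\ell$, yielding the claimed $\ell/N^2$ bound. In the weaker case, $A_k\lesssim \sqrt{T}\|f\|_{L^5(\nu_\ab)}\,m_k/N\cdot\sqrt{\sum h^2}$ and $\sum_{k=0}^{K-1} m_k\leq 2\ell$, giving the $\ell^2/N^2$ bound. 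The only step that is not immediately routine is the non-dyadic terminal scale, which as indicated is handled by a straightforward inspection of the proof of Lemma \ref{globalrenormalization}; everything else is telescoping, Minkowski, and summing a geometric series.
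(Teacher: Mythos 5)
Your proof is correct and follows essentially the same route as the paper: telescope along dyadic scales $\ell_0\to 2\ell_0\to\cdots$, apply Lemma~\ref{globalrenormalization} to each increment, and sum a geometric series inside the square. The paper writes $\ell=2^{m+1}\ell_0+r$ and collects the dyadic terms plus one remainder of ratio in $[1,2)$; you run the recursion from the other end with a terminal ratio in $(1,2]$, which is the same decomposition rephrased. One small thing you do that the paper does not: you explicitly argue that Lemma~\ref{globalrenormalization} extends to a non-dyadic scaling factor by inspecting its proof. The paper applies the lemma to its remainder term without comment, so your extra care here is a genuine (if minor) tightening of the write-up rather than a deviation. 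Your bookkeeping for both the $\nabla\tilde f(\ab)=0$ case (sum $\sqrt{m_k}\lesssim\sqrt\ell$, giving $\ell/N^2$) and the weaker case (sum $m_k\lesssim\ell$, giving $\ell^2/N^2$) matches the claimed bounds.
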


\begin{proof}  As the second statement has a similar proof, we only demonstrate the first display. Write $\ell = 2^{m+1}\ell_0 + r$ where $0\leq r\leq 2^{m+1}\ell_0 -1$.  Then,
\begin{eqnarray*}
&&E_{\nu_{\ab}}[f(\BF \alpha)|\BF \alpha^{(\ell_0)}]-E_{\nu_{\ab}}[f(\BF \alpha)|\BF \alpha^{(\ell)}]
\  = \ E_{\nu_\ab}[f(\BF \alpha)|\BF \alpha^{(2^{m+1}\ell_0)}]  - E_{\nu_\ab}[f(\BF \alpha)|\BF \alpha^{(\ell)}] \\
&& \ \ \ \ \ \ \ \ \ \ + \sum_{i=0}^{m} \big \{E_{\nu_{\ab}}[f(\BF \alpha)|\BF \alpha^{(2^i \ell_0)}]-E_{\nu_{\ab}}[f(\BF \alpha)|\BF \alpha^{(2^{i+1}\ell_0)}]\big\}.
\end{eqnarray*}
Now, by Minkowski's inequality and Lemma \ref{globalrenormalization}, we obtain
that the left-side of the display in the lemma statement is bounded by a constant times
\begin{eqnarray*}
&&\left\{ \Big(\frac{T 2^{m+1}\ell_0}{N^2}\Big)^{1/2} + \sum_{i=0}^{m} \Big(\frac{T 2^{i}\ell_0}{N^2}\Big)^{1/2} \right\}^2 \|f\|^2_{L^5(\nu_\ab)}\sum_{x\in \T_N} h^2(x)\\
&& \ \lesssim \ 
 \frac{T \ell \, \|f\|^2_{L^5(\nu_\ab)} }{N^2}\sum_{x\in \T_N} h^2(x),
 \end{eqnarray*}
to finish the proof.
\end{proof}
The last step is an `equivalence of ensembles' estimate.
\begin{lem}
\label{EE_1block}
Let $f:\Omega^n \to \mathbb R$ be a local $L^5(\nu_\ab)$ function supported on sites in $\Lambda_{\ell_0}$ such that $\tilde f(\ab)=0$ and $\nabla \tilde f(\ab)=0$.  Then, there exists a constant $C = C(\ab,\ell_0,n)$ 
such that, for $\ell \geq  \ell_0$, and $h:\T_N\rightarrow  {\mathbb R}$,
\begin{eqnarray*}
&&\E_{\nu_\ab}\Big[\sup_{0\leq t\leq T}\Big(\int_0^t\sum_{x\in{\T_N}}\bar h_{c,s}(x)\Big\{\E_{\nu_{\ab}}\big[f(\tau_x\BF \alpha_s)|\BF \alpha_s^{(\ell)}(x)\big]\\
&&\ \ \ \ \ \ \ 
-\frac{1}{2}\sum_{j,k}\partial_{a^j}\partial_{a^k}\tilde f(\ab) \Big((\alpha_s^{j,(\ell)}(x) - a^j_0)(\alpha_s^{k,(\ell)}-a^k_0) - \frac{\Gamma_{j,k} (\ab) }{2\ell +1}\Big)\Big\} ds \Big)^2\Big] \\
&&\ \ \ \ \ \ \ \ \ \leq \ CT^2\|f\|^2_{L^5(\nu_\ab)}\frac{N^2}{\ell^{3}}\Big(\frac{1}{N}\sum_{x\in \T_N}|h(x)|\Big)^2\end{eqnarray*}
On the other hand, when only $\tilde f(\ab)=0$ is known,
\begin{eqnarray*}
&&\E_{\nu_\ab}\Big[\sup_{0\leq t\leq T}\Big(\int_0^t\sum_{x\in{\T_N}} \bar h_{c,s}(x)\Big\{\E_{\nu_{\ab}}[f(\tau_x\BF\alpha_s)|\BF\alpha_s^{(\ell)}(x)]\\
&&\ \ \ \ \ \ \ \ \ \ \ \ \ \ \ \ \ \ \ \ \ \ \ \ \ \ \ \ \ \ \ \  \ \ \ \ \ \ \ -\sum_i\partial_{a^i}\tilde f(\ab)\big(\alpha_s^{i,(\ell)}(x)-a^i_0\big) \Big\} ds\Big)^2\Big] \\
&&\ \ \ \ \ \ \ \ \ \leq \ CT^2\|f\|^2_{L^5(\nu_\ab)}\frac{N^2}{\ell^{2}}\Big(\frac{1}{N}\sum_{x\in \T_N}|h(x)|\Big)^2.\end{eqnarray*}
\end{lem}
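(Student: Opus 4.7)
The essential observation is that, in contrast to Lemmas \ref{globalone_block} and \ref{globaltwo-blocks}, the integrand here is already a measurable function of the instantaneous $\ell$-averages $\bigl({\BF \alpha}_s^{(\ell)}(x)\bigr)_{x\in\T_N}$. Hence the plan is to bypass the dynamic $H_{-1,N}$ machinery entirely and rely only on stationarity of $\nu_\ab$ plus a static $L^2(\nu_\ab)$ bound supplied by the equivalence of ensembles, Theorem \ref{2EE}.

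Let $R_x({\BF \alpha})$ denote the summand appearing in the integrand, namely the difference between $E_{\nu_\ab}[f(\tau_x{\BF \alpha})\,|\,{\BF \alpha}^{(\ell)}(x)]$ and the appropriate first- or second-order centered Taylor surrogate. Theorem \ref{2EE}, together with translation invariance of $\nu_\ab$, gives
\[
\|R_x\|_{L^2(\nu_\ab)}\le \|R_0\|_{L^4(\nu_\ab)} \le C\,\|f\|_{L^5(\nu_\ab)}\,\ell^{-3/2}
\]
in the first case, and the analogous bound $C\,\|f\|_{L^5(\nu_\ab)}\,\ell^{-1}$ in the second. First I would remove the supremum over $t$ via $\sup_{0\le t\le T}\bigl(\int_0^t g(s)\,ds\bigr)^2 \le T\int_0^T g(s)^2\,ds$ applied to $g(s)=\sum_{x\in\T_N}\bar h_{c,s}(x)R_x({\BF \alpha}_s)$, then take expectation and use stationarity of the process under $\nu_\ab$ (together with periodicity of the shift in $\bar h_{c,s}$) to reduce the left-hand side to $T^2\,E_{\nu_\ab}[g(0)^2]$.

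Next, Minkowski's inequality and translation invariance of $\nu_\ab$ yield
\[
\|g(0)\|_{L^2(\nu_\ab)} \le \sum_{x\in\T_N}|\bar h_{c,0}(x)|\cdot\|R_x\|_{L^2(\nu_\ab)} = \|R_0\|_{L^2(\nu_\ab)}\sum_{x\in\T_N}|h(x)|,
\]
so that inserting the above bound on $\|R_0\|_{L^2(\nu_\ab)}$ and writing $\bigl(\sum_x|h(x)|\bigr)^2 = N^2\bigl(N^{-1}\sum_x|h(x)|\bigr)^2$ produces both stated estimates. Using Minkowski rather than a Cauchy--Schwarz-type summation in $x$ is what delivers the $L^1$-in-$h$ structure; this is essentially forced because $R_x$ and $R_y$ are correlated whenever $|x-y|\le 2\ell$, so one cannot gain anything from cross-cancellation. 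No step poses a real obstacle here: all of the substantive content sits in Theorem \ref{2EE}, whose proof occupies the next subsection.
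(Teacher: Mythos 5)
Your proposal is correct and follows essentially the same route as the paper: remove the supremum and apply Cauchy--Schwarz in time to reduce the dynamic estimate to a static one, then use translation invariance and the equivalence-of-ensembles estimate of Theorem \ref{2EE}. The only variation is in how you handle the sum over $x$: you use Minkowski's inequality in $L^2(\nu_\ab)$, while the paper uses the weighted Cauchy--Schwarz bound $\bigl(\sum_x |h_x| |\psi_x|\bigr)^2 \le \bigl(\sum_x |h_x|\bigr)\bigl(\sum_x |h_x| \psi_x^2\bigr)$ before taking expectation; both deliver the same $\bigl(\sum_x |h_x|\bigr)^2 \|\psi_0\|^2_{L^2(\nu_\ab)}$ bound, so your aside that Minkowski rather than a Cauchy--Schwarz-type inequality is what produces the $L^1$-in-$h$ structure is not quite right --- the weighted Cauchy--Schwarz form does so equally well.
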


\begin{proof} By squaring and using stationarity and translation-invariance of $\nu_\ab$, the left-hand side of each display is bounded by
$$\E_{\nu_\ab} \Big[\Big(\int_0^T \sum_{x\in \T_N}|\bar h_{c,s}(x)| |\psi(x,\BF \alpha_s)| ds\Big)^2\Big] \ \leq \ T \int_0^T E_{\nu_\ab}\Big[\Big(\sum_{x\in \T_N} |\bar h_{c,s}(x)||\psi(x,\BF \alpha)|\Big)^2\Big] ds$$
where $\psi(x,\BF \alpha)$ is the expression in curly braces. 
Now, for each $\BF \alpha$, by Schwarz inequality,
$$\Big(\sum_{x\in \T_N} |\bar h_{c,s}(x)|\psi(x, \BF \alpha)\Big)^2 \ \leq \ \Big(\sum_{x\in \T_N} |\bar h_{c,s}(x)|\Big)\sum_{x\in \T_N} |\bar h_{c,s}(x)|\psi^2(x, \BF \alpha).$$
Since $\nu_\ab$ is translation-invariant, $E_{\nu_\ab}[\psi^2(x,\BF \alpha)] = E_{\nu_\ab}[\psi^2(0,\BF \alpha)]$. Moreover, $\sum_x | \bar h_{c,s}(x)| =\sum_x |h(x)|$.  The desired bound now follows, noting the form of $\psi(0,\BF \alpha)$, from Theorem \ref{2EE}. 
\end{proof}

\medskip
\noindent {\BF{Proof of Theorem \ref{gbg_L2}}.}  With the above ingredients in place, the estimate follows now by first applying Lemma \ref{globalone_block} with $\ell = \ell_0$, and then Lemmas \ref{globaltwo-blocks} and \ref{EE_1block}.
\qed

\subsection{Proof of Theorem \ref{2EE}: Equivalence of ensembles}
\label{ee_section}

We will adapt the proof of Proposition 5.1 in \cite{GJS} to the multi-species setting.
We prove the first display in Theorem \ref{2EE} as the second statement, following the same scheme, has a simpler argument.  At the expense of the constant, we need only to consider all large $\ell>\ell_0$.  To simplify expressions, we will make averages over $\Lambda^+_{m} = \{x: 1\leq x\leq m\}$, instead of the two-sided block $\Lambda_m$, but we will keep similar notations for the averages.  We recall that the density $\ab$ is generic and need not satisfy the Frame condition (FC).

The argument follows in several steps.

\medskip
{\it Step 1.}
Recall the definition of the covariance matrix $\Gamma:=\Gamma(\z)$ where $\Gamma_{i,j}(\z) = E_{\nu_\z}[(\alpha^i(0)-z_i)(\alpha^j(0)-z_j)]$ for $1\leq i,j\leq n$ and $\z \in (0, \infty)^n \cap \ocirc{{\rm{Dom}}_\Phi}$.
 Note also the canonical expectation $E_{\nu_\z}[f (\BF \alpha) |{\BF \alpha}^{(\ell)} = \y]$, does not depend on $\z$, and that we are free to choose it as desired. Develop
\begin{eqnarray*}
E_{\nu_\z}[f (\BF \alpha) |{\BF \alpha}^{(\ell)} = \y] &=&  E_{\nu_{\y+{\ab}}}\Big[ f({\BF \alpha})\,  \Big| \, \tfrac{1}{\ell} \sum_{x\in \Lambda^+_\ell}{\BF \alpha} (x)-{\ab} = \y\Big]\\
&=& \frac{E_{\nu_{\y+{\ab}}}\big[f(\a)1(\frac{1}{\ell}\sum_{x\in \Lambda^+_\ell} \a(x)-{\ab} = \y)\big]}{\nu_{\y+{\ab}}\big(\frac{1}{\ell}\sum_{x\in \Lambda^+_\ell}\a(x)-{\ab} = \y\big)}.
\end{eqnarray*}
Define for $m\ge 1$,  $\theta_m(\z) = {m}^{n/2}\nu_{\y+{\ab}}\big(\sum_{x\in \Lambda^+_m}(\alpha(x)-\ab-\y)=\z\big)$,
and since $f$ is a function supported only on sites in $\Lambda^+_{\ell_0}$, write the last expression as
$$E_{\nu_{\y+{\ab}}}\left [ f(\BF \alpha) \frac{{\ell}^{n/2}\theta_{\ell-\ell_0}(-\sum_{x\in \Lambda^+_{\ell_0}} (\BF \alpha (x)-\y-{\ab}))}{(\ell-\ell_0)^{n/2}\theta_\ell(0)}\right].$$
The goal will be now to expand $\theta_{\ell-\ell_0}(\z)$ to recover the main terms approximating the conditional expectation $E_{\nu_{\ab}}[f (\BF \alpha) |{\BF \alpha}^{(\ell)} = \y]$ 
when $\|\y \|$ is small.  We will treat the case when $\|\y \|$ is bounded away from $0$ afterwards in Step 8.

\medskip
{\it Step 2.} To expand $\theta_m(\z)$,
let $\psi_\y: \t \in \R^n \to \psi_\y (\t)= E_{\nu_{\y+{\ab}}}[e^{i\t\cdot(\BF \alpha (0)-{\ab}-\y)}]$ be the characteristic function of $\BF \alpha (0) -\ab -\y$ under $\nu_{\y +\ab}$.  Then, one can write
\begin{eqnarray*}
\theta_m(\z) &=& \frac{{m}^{n/2}}{(2\pi)^n} \int_{[-\pi,\pi]^n} e^{-i\t\cdot \z}\psi_\y^m(\t)d\t\\
&=& \frac{1}{(2\pi)^n}\int_{[-\pi\sqrt{m}, \pi\sqrt{m}]^n} e^{-i\t\cdot \z/\sqrt{m}}\psi^m_\y(\t/\sqrt{m})d\t.
\end{eqnarray*}
By Taylor expansion,
\begin{equation}
\label{Taylor}
\begin{split}
(2\pi)^n \theta_m(\z) & = \int_{[-\pi \sqrt{m}, \pi \sqrt{m}]^n} \psi_\y^m(\t/\sqrt{m}) d\t \\
&\ \ \   - \int_{[-\pi \sqrt{m}, \pi \sqrt{m}]^n} \frac{i\t\cdot \z}{\sqrt{m}} \psi^m_\y(\t/\sqrt{m}) d\t \\
& \ \ \  - \frac{1}{2}\int_{[-\pi \sqrt{m},\pi \sqrt{m}]^n} \frac{(\t\cdot\z)^2}{m} \psi_\y^m(\t/\sqrt{m}) d\t \\
&\ \ \ + O\Big(\frac{|\z|^3}{m^{3/2}}\Big) \int_{[-\pi \sqrt{m},\pi \sqrt{m}]^n} |\t|^3 |\psi_\y^m(\t/\sqrt{m})|d\t.
\end{split}
\end{equation}
\medskip

{\it Step 3.}
Let $\delta>0$ be sufficiently small such that the ball with radius $\delta$ around $\ab$ is contained in the allowable densities $\ocirc{{\rm{Dom}}}_\Phi\cap(0,\infty)^n$. Let also $0<\varepsilon\leq \pi$ and ${\BF 1}= (1,1,\ldots, 1)\in \R^n$.
First, for $\varepsilon>0$,
$$\sup_{|\y|\leq \delta, \varepsilon\leq |t|\leq \pi}|\psi_\y^m(\t)|< C_0^m$$ where $C_0<1$. To prove this, write 
\begin{eqnarray*}
&&|\psi_\y(\t)| \ \leq \  |\nu_{\y+{\ab}}(\BF \alpha (0)={\BF 0}) + e^{it\cdot {\BF 1}}\nu_{\y+{\ab}}(\BF \alpha (0)={\BF 1})| + \sum_{k\neq \BF 0, {\BF 1}}\nu_{\y+{\ab}}(\BF \alpha (0)=\k)\\
&&\leq \Big(A^2 -2\nu_{\y+{\ab}}(\BF \alpha (0)={\BF 0})\nu_{\y+{\ab}}(\BF \alpha (0)={\BF 1})[1-\cos(\t\cdot{\BF 1})]\Big)^{1/2}
 + 1-A
\end{eqnarray*}
where $A = \nu_{\y+{\ab}}(\a(0)={\BF 0}) + \nu_{\y+{\ab}}(\a(0)={\BF 1})$.
By continuity of $\nu_{\y+{\ab}}(\BF \alpha (0)=\k)$ in $\y$, we have $0<\nu_{\y+{\ab}}(\BF \alpha (0)=\k)<1$ for $\k={\BF 0},{\BF 1}$ uniformly for $|\y|\leq \delta$.  Hence,
uniformly over $\varepsilon\leq |\t|\leq \pi$ and $|\y|\leq \delta$, the right hand side of the display above is strictly bounded by a constant $C_0<1$.

Second, for $0\leq |\t/\sqrt{m}|<\varepsilon$ and $|\y|\leq \delta$,
$$\psi_\y^m(\t/\sqrt{m}) \ = \ \Big[1-\frac{1}{2m}\sum_{j,k} t_jt_k \Gamma_{j k} (\ab+\y) + O\big(C(\ab,\delta)|\t|^3m^{-3/2})\Big]^m.$$
Then, by using the smoothness of $\y \to \Gamma (\ab +\y)$,
$|\psi_\y^m(\t/\sqrt{m})|\leq e^{-C_1(\delta,\varepsilon)\t\cdot \Gamma(\ab)\t}$.  Here, $C_1>0$ and we recall the covariance matrix $\Gamma(\ab)$ is positive-definite, symmetric, and invertible.

Last, by the classical local limit theorem, $\lim_{m\uparrow\infty}\theta_m({\BF 0}) = (2\pi)^{-n/2} \sqrt{ {\rm det} \, \Gamma^{-1}({\ab} +\y)}$.

\medskip

{\it Step 4.}
We now observe, for $|\y|\leq \delta$ and $m\geq 1$, as a consequence of the estimates in Step 3, that the integral in the last term in \eqref{Taylor} is uniformly bounded:  Split the integral over the ranges $|\t/\sqrt{m}|<\varepsilon$ and $|\t/\sqrt{m}|\geq \varepsilon$ and bound each part separately.  Hence, the last term in \eqref{Taylor} is of order $O(|\z|^3/m^{3/2})$. 

Similarly, we split the second integral in \eqref{Taylor}, when $|\y|\leq \delta$, over ranges $|\t/\sqrt{m}|\geq \varepsilon$ and $|\t/\sqrt{m}|< \varepsilon$.  On the first range, the restricted integral exponentially decays, and on the range $|\t/\sqrt{m}|<\varepsilon$, 
the integrand is almost an odd function as
$$\psi_\y^m(\t/\sqrt{m}) \ = \ \left(1-\frac{\t\cdot \Gamma(\y+{\ab})\t }{2m}\right)^m \Big[1+ O(C(\delta)|\t|^3m^{-1/2})\Big].$$
Then, we conclude that the second term in \eqref{Taylor} is of order $O(|\z|/m^{1/2})$.

\medskip
{\it Step 5.}  
Then, for $|\y|\leq \delta$, we have
\begin{eqnarray*}
&&E_{\nu_{{\ab}}}[f(\BF\alpha)|{\BF \alpha}^{(\ell)} = \y] = \kappa_0 (\ell) \, E_{\nu_{\y+{\ab}}}[f(\BF \alpha)] \\
&&+ \frac{1}{\sqrt{\ell-\ell_0}}\sum_{j=1}^n\kappa^j_1 (\ell)\, E_{\nu_{\y+{\ab}}}\big[ f(\BF \alpha)\big(\sum_{\z\in \Lambda^+_{\ell_0}}\alpha^j(\z)-a^j_0-y^j\big)\big]\\
&& + \frac{1}{\ell-\ell_0}\sum_{j,k=1}^n \kappa^{j,k}_2 (\ell) \, E_{\nu_{\y+{\ab}}}\big[ f(\BF \alpha)\big(\sum_{\z\in \Lambda^+_{\ell_0}}\alpha^j(\z)-a^j_0-y^j\big)\big(\sum_{\z\in \Lambda^+_{\ell_0}}\alpha^k(\z)-a^k_0-y^k\big)\big] \\
&&+ \varepsilon_f(\ell),
\end{eqnarray*}
where 
$$|\varepsilon_f(\ell)| \leq C({\ab},\ell_0,\delta,n)\|f\|_{L^1 (\nu_{\ab +\y})}\ell^{-3/2} \le C^\prime ({\ab},\ell_0,\delta,n)\|f\|_{L^2 (\nu_{\ab})}\ell^{-3/2}$$ 
and $\kappa_p:=\kappa_p (\ell)$ for $p=0,1,2$ are explicit expressions. Indeed, from the discussion in Step 4, one observes
\begin{equation*}
\begin{split}
&\kappa_0(\ell) = \frac{{\ell}^{n/2}}{(\ell-\ell_0)^{n/2}}\frac{\theta_{\ell-\ell_0}({\BF 0})}{\theta_\ell({\BF 0})}  \ = \ 1 + O(\ell^{-1/2}), \\
&\kappa^j_1(\ell) = \frac{\ell^{n/2}}{\theta_\ell({\BF 0})(\ell-\ell_0)^{n/2}}\frac{1}{(2\pi)^{n}}\int_{[-\pi\sqrt{\ell-\ell_0}, \pi\sqrt{\ell-\ell_0}]^n} i \, t^j \, \psi_\y^{\ell-\ell_0}\Big(\frac{\t}{\sqrt{\ell-\ell_0}}\Big)d\t \ = \ O(\ell^{-1/2}), \\
&\kappa^{j,k}_2(\ell) = \frac{-{\ell}^{n/2}}{2\theta_\ell({\BF 0}){(\ell-\ell_0)}^{n/2}}\frac{1}{(2\pi)^{n}}\int_{[-\pi\sqrt{\ell-\ell_0}, \pi\sqrt{\ell-\ell_0}]^n} \, t^j \, t^k\, \psi_\y^{\ell-\ell_0}\Big(\frac{\t}{\sqrt{\ell-\ell_0}}\Big)d\t.
\end{split}
\end{equation*}
When $\ell \to \infty$ we may evaluate $-\kappa_2^{j,k} (\ell)$ as
$$(2\pi)^{n/2}\left(\sqrt{{\rm det}\; \Gamma^{-1}(\y + \ab)}\right)^{-1}\frac{1}{(2\pi)^n}\int_{\R^n} \,  t^j \, t^k\,  e^{-\t\cdot \Gamma(\y+\ab)\t}d\t \; +\;  O(\ell^{-1/2}).$$
Since $\Gamma = \big(\Gamma^{-1}\big)^{-1}$ and  
$$(2\pi)^{-n/2}\left(\sqrt{{\rm det}\, \Gamma^{-1} (\y +\ab) } \right)^{-1}\int_{\R^n} \,  t^j \, t^k\,  e^{-\t\cdot \Gamma(\y+\ab)\t}d\t = \left[\Gamma^{-1}(\y+\ab) \right]_{j k},$$
 manipulating the above display, we have
\begin{equation}
\label{eq:kappa2jr}
\kappa_2^{j,k} (\ell) = -\left[ \Gamma^{-1}(\y+\ab)\right]_{j k} + O(\ell^{-1/2}).
\end{equation}

{\it Step 6.} We now develop expansions of $E_{\nu_{\y+{\ab}}}[h (\BF \alpha)]$ for a local $L^2(\nu_{\ab})$ function $h$ supported on coordinates in $\Lambda^+_{\ell_0}$.
Through `tilting', we have 
$$E_{\nu_{\y+{\ab}}}[h (\BF \alpha) ] \ = \ E_{\nu_{\ab}}\left[ h(\BF \alpha)
  \frac{e^{\BF \lambda (\y+{\ab})\cdot \sum_{x\in
        \Lambda^+_{\ell_0}}(\BF \alpha(x)-{\ab})}}{M^{\ell_0}(\BF \lambda(\y+{\ab}))}\right],$$
where the chemical potential or `tilt' $\BF \lambda (\y+{\ab})$ is chosen to change the density to
$\y+{\ab}$ and $M(\BF \lambda) = E_{\nu_{\ab}}[e^{\BF \lambda \cdot (\BF \alpha(0)-{\ab})}]$. 
%
Consider now the gradient and Hessian of $E_{\nu_{\y + \ab}}[h]$:
\begin{equation}
\label{eq:pda1}
\partial_{y^j} E_{\nu_{\y+\ab}}[h (\BF \alpha)] = \sum_{r=1}^n \partial_{a^j} \lambda^r(\y+\ab) \; E_{\nu_{\y+\ab}}\Big[(h(\BF \alpha)-E_{\nu_{\y+\ab}}[h])(\sum_{x\in \Lambda^+_\ell}(\alpha^r(x) - y^r-a^r_0))\Big],
\end{equation}
and
\begin{equation}
\label{eq:pda2}
\begin{split}
& \partial_{y^k}\partial_{y^i} E_{\nu_{\y+\ab}}[h (\BF \alpha )] \\
&= \sum_{r=1}^n \partial_{a^k}\partial_{a^i}\lambda^r(\y+\ab)E_{\nu_{\y+\ab}}[(h(\a)-E_{\nu_{\y+\ab}}[h])(\sum_{x\in \Lambda^+_\ell}(\alpha^r(x) - y^r-a^r_0))]\\
&\ \ + \sum_{j,r}\partial_{a^i}\lambda^j(\y+\ab) \partial_{a^k}\lambda^r (\y+\ab)\\
&\ \ \ \ \ \ \ \times E_{\nu_{\y+\ab}}[(h(\a)-E_{\nu_{\y+\ab}}[h])(\sum_{x\in \Lambda^+_\ell}(\BF \alpha^j(x) - y^j-a^j_0))(\sum_{x\in \Lambda^+_\ell}(\alpha^r(x) - y^r-a^r_0))].
\end{split}
\end{equation}

The third partial derivatives can also be computed as in the single species case (cf. step 6 in proof of Proposition 5.1 in \cite{GJS}). Recall from Lemma \ref{lem:2.1} that 
\begin{equation*}
\nabla \BF \lambda (\y + \ab) = \Gamma^{-1}  (\y + \ab).
\end{equation*}

Suppose now all partial derivatives vanish, $\partial_{y^j}E_{\nu_{\y+\ab}}[h (\BF \alpha)]=0$ for $1\leq j\leq n$.  Then, by \eqref{eq:pda1}, with
$${\BF w} = \Big ( E_{\nu_{\y+\ab}}\big[\, (h(\BF \alpha)-E_{\nu_{\y+\ab}}[h])(\sum_{x\in \Lambda^+_\ell} (\alpha^r(x)-y^r-a^r_0)) \, \big]\Big)_{r=1}^n,$$
thought of as a row vector, we will have ${\BF w}\Gamma^{-1}(\y+\ab)={\BF 0}$, and so ${\BF w}={\BF 0}$.  Hence, in this case, the formula \eqref{eq:pda2} for the second partial derivatives $\partial_{y_k}\partial_{y_j} E_{\nu_{\y+\ab}}[h]$ simplifies, the first term vanishing.

Moreover, when $\tilde h(\ab) = 0$ and $\nabla \tilde h(\ab)={\BF 0}$, we see by \eqref{eq:pda1} that
\begin{equation}
\label{step6_line}
\partial_{a^j} \tilde h({\ab}) \ = \ \sum_{r=1}^n \partial_{a^j} \lambda^r(\ab) E_{\nu_{\ab}}\Big[h(\BF \alpha)(\sum_{x\in \Lambda^+_\ell}(\alpha^r(x) -a^r_0)) \Big]
\end{equation}
and by \eqref{eq:pda2} that
\begin{equation*}
\partial_{a^k}\partial_{a^i} \tilde h(\ab) = \sum_{j,r}\partial_{a^i}\lambda^j(\ab) \partial_{a^k}\lambda^r(\ab)  E_{\nu_{\ab}} \Big[h(\BF \alpha)(\sum_{x\in \Lambda^+_\ell}(\alpha^j(x) -a^j_0))(\sum_{x\in \Lambda^+_\ell}(\alpha^r(x) -a^r_0)) \Big].
\end{equation*}

Finally, for $|\y|\leq \delta$, when $\tilde h({\ab})=0$ and $\nabla \tilde h({\ab})={\BF 0}$, we may expand around $\y={\BF 0}$:
\begin{eqnarray*}
&&E_{\nu_{\y+{\ab}}}[h(\BF \alpha)]  \\
&& =   \frac{1}{2}\sum_{k,i} \Big[\sum_{j,r} \partial_{a^i} \lambda^j(\ab) \partial_{a^k} \lambda^r (\ab)E_{\nu_{\ab}}[h(\BF \alpha)\big(\sum_{x\in\Lambda^+_{\ell_0}}\alpha^j(x)-a^j_0\big)\big(\sum_{x\in\Lambda^+_{\ell_0}}\alpha^r(x)-a^r_0\big)\big]\Big]\; y^i y^k\\
&&\ \ \ \ \   + \|\y\|^3r({\ab},\delta,h).
\end{eqnarray*}
When, only $\tilde h({\ab})=0$ is known,
\begin{align*}
E_{\nu_{\y+\ab}}[h(\a)] &= \sum_{j=1}^n \left[
\sum_{r=1}^n \partial_{a^j} \lambda^r(\ab) E_{\nu_{\ab}} [(h(\a)-E_{\nu_{\ab}}[h])(\sum_{\z\in \Lambda^+_\ell}(\alpha^r(\z) - a^r_0))]\right] y^j\\
 &\ \ \ \ \ \ + \|\y\|^2r({\ab},\delta,h). 
 \end{align*}
When possibly $\tilde h({\ab})\neq 0$,
  \begin{eqnarray*}
E_{\nu_{\y+{\ab}}}[h(\BF \alpha)]&=& E_{\nu_{\ab}}[h(\BF \alpha)] + \|\y\| r({\ab}, \delta, h).
 \end{eqnarray*}
Here, as the first, second and third partial derivatives of $E_{\nu_{\y+\ab}}[h]$ are bounded for $|\y|\leq \delta$,
we may conclude that the remainders $|r({\ab},\delta,h)| \leq C({\ab}, \delta)\|h\|_{L^2(\nu_{\ab})}$.

\medskip

{\it Step 7.}
Consider the expansion of $E_{\nu_{\ab}}[f (\BF \alpha) |{\BF \alpha}^{(\ell)} = \y]$ in Step 5 when $|\y|\leq \delta$.  With $h(\BF \alpha)$ equal to variously $f (\BF \alpha)$, $f(\BF \alpha)\big(\sum_{x\in\Lambda_{\ell_0}^+}(\alpha^j (x)-y^j - a^j_0)\big)$, and $f(\BF \alpha)\big(\sum_{x\in\Lambda_{\ell_0}^+}(\alpha^j (x)-y^j - a^j_0)\big)\big(\sum_{x\in\Lambda_{\ell_0}^+}(\alpha^k(x)-y^k - a^k_0)\big)$, we may write $E_{\nu_{\ab}}[f (\BF \alpha) | {\BF \alpha}^{(\ell)} = \y]$
equal to
\begin{equation*}
\begin{split}
 &\frac{\kappa_0 (\ell) }{2}\sum_{k,i}\Big[\sum_{j,r} \partial_{a^i} \lambda^j \, \partial_{a^k} \lambda^r \, E_{\nu_{\ab}}\big[f(\BF \alpha)\big(\sum_{x\in \Lambda_{\ell_0}^+}(\alpha^j(x)-a^j_0)\big)\big(\sum_{x\in \Lambda_{\ell_0}^+}(\alpha^r(x)-a^r_0)\big)\big]\Big]y^i y^k \\
 &\ \ \ \ \ \ \ \ \ \ + \kappa_0 (\ell) \|\y\|^3 r(f)\\
&\ + \frac{1}{\sqrt{\ell-\ell_0}}\sum_{j=1}^n \kappa^j_1 (\ell)  \left\{\sum_{r=1}^n \partial_{a^k} \lambda^r\,  y^r \, E_{\nu_{\ab}}\big[f(\BF \alpha)\big(\sum_{x\in \Lambda_{\ell_0}^+}(\alpha^j(x)-a^j_0)\big)\big(\sum_{x\in \Lambda_{\ell_0}^+}(\alpha^r(x)-a^r_0)\big)\big]\right\}\\
&\ \ \ \ \ \ \ \ \ + \sum_{j=1}^n\frac{\kappa^j_1 (\ell) }{\sqrt{\ell-\ell_0}} \|\y\|^2r(f)\\
&\ + \frac{1}{\ell-\ell_0}\sum_{j,r}\kappa^{j,r}_2 (\ell) \,  E_{\nu_{\ab}}\big[f(\BF \alpha)\big(\sum_{x\in \Lambda^+_{\ell_0}}(\alpha^j(x)-a^j_0)\big)\big(\sum_{x\in \Lambda^+_{\ell_0}}(\alpha^r(x)-a^r_0)\big)\big]\\
&\ \ \ \ \ \ \ \ \  + \sum_{j,r}\frac{\kappa^{j,r}_2 (\ell)}{\ell-\ell_0}\|\y\| r(f) + \varepsilon_f(\ell),
\end{split}
\end{equation*}
where $|r(f)|\leq C({\ab}, \ell_0, \delta)\|f\|^2_{L^2(\nu_{\ab})}$.  

To group more the above expressions, note that $[\Gamma^{-1}]^* = \Gamma^{-1}$ as $\Gamma$ is symmetric.  Then, noting Lemma \ref{lem:2.1} and recalling \eqref{eq:kappa2jr},
\begin{align*}
-\kappa^{j,r}_2 (\ell) &= \Gamma^{-1}_{j r} (\y + \ab)  + O(\ell^{-1/2})\\
&=\big(\Gamma^{-1}\Gamma[\Gamma^{-1}]^*\big)_{j r} \, (\y + \ab) + O(\ell^{-1/2})\\
&= \sum_{k,i} \Big[\partial_{a^i} \lambda^j \, \partial_{a^k} \, \lambda ^r \, \Gamma_{i k} \Big](\y + \ab) + O(\ell^{-1/2}).
\end{align*}

Hence, with the assumptions $\tilde f({\ab})=0$ and $\nabla \tilde f(\ab)={\BF 0}$, the estimates and relations on $\kappa_i$ for $i=0,1,2$,  \eqref{step6_line}, and
$E_{\nu_{\ab}}\big[[y^j]^{2p}\big] = O(\ell^{-p})$ so that each $y^j$ factor is $O(\ell^{-1/2})$, and by Taylor expansion around $\ab$ to have $-\kappa^{j,r}_2 (\ell) = \sum_{k,i} \Big[\partial_{a^i} \lambda^j \, \partial_{a^k} \, \lambda ^r \, \Gamma_{i k} \Big](\ab) + O(\ell^{-1/2})$,  we can group the dominant terms to arrive at
\begin{equation*}
\begin{split}
&E_{\nu_{\ab}} \left[ 1(\|\y\|\leq \delta)\Big(E_{\nu_{\ab}}[f(\BF \alpha)|{\BF \alpha}^{(\ell)} = \y] -  \sum_{i,k}\Big\{y^i y^k - \tfrac{\Gamma_{ik} (\ab)}{\ell}\Big\}\frac{\partial_{a^i}\partial_{a^k }\tilde f({\ab})}{2}\Big)^4\right] \\ 
&\leq \ C\|f\|^4_{L^2(\nu_{\ab})}\ell^{-6}.
\end{split}
\end{equation*}

\medskip
{\it Step 8.}
On the other hand,
by say large deviations estimates, we bound
\begin{equation*}
\begin{split}
&E_{\nu_{\ab}} \left[ 1(\|\y\|> \delta)\Big(E_{\nu_{\ab}}[f(\BF \alpha)|{\BF \alpha}^{(\ell)} =\y] -  \sum_{i,k}\Big\{y^i y^k - \tfrac{\Gamma_{ik} (\ab)}{\ell}\Big\}\frac{\partial_{a^i}\partial_{a^k }\tilde f({\ab})}{2}\Big)^4\right]\\
&\leq C\|f\|^4_{L^5(\nu_{\ab})}O(\ell^{-6})
\end{split}
 \end{equation*}
to finish the proof.
\qed

\section{Trilinear condition}
\label{trilinear}

In this section, we show that the coupled KPZ-Burgers equation obtained in the
limit satisfies the so-called trilinear condition \eqref{eq:7.tri} stated below.
Recall that, under the Frame condition (FC)
for the density, the equation \eqref{gen_OU_sec}
has the form:
\begin{equation}  \label{eq:7.1}
\partial_t \Y^i= \tfrac{\la}2 \De \Y^i + c \sum_{j,k=1}^n \ga_{j, k}^i \nabla (\Y^j\Y^k)
+ q_i \nabla\dot{\W}_t^i, \quad 1 \le i \le n,
\end{equation}
where $\la = \partial_{a^i}\tilde g_i(\BF a)\Big|_{\ab}$,
$\ga_{j,k}^i = \partial_{a^j}\partial_{a^k}(\tilde g_i( \BF a))\Big|_{\ab}$ and 
$q_i = \sqrt{\tilde g_i(\BF a)}\Big|_{\ab}.$  
By the change of time and magnitude defined by 
$\bar \Y_t^i := \tfrac{\sqrt{\la}}{q_i} \Y_{\la^{-1}t}^i$, the 
equation \eqref{eq:7.1} is rewritten as
\begin{equation}  \label{eq:6.2-b}
\partial_t \bar \Y^i= \tfrac12 \De \bar \Y^i + \tfrac{c}{\la^{3/2}}
\sum_{j,k=1}^n \tfrac{q_jq_k}{q_i}  \ga_{j k}^i \nabla(\bar \Y^j \bar \Y^k)
+ \nabla\dot{\W}_t^i, \quad 1 \le i \le n,
\end{equation}
in a canonical form in the sense of law.

\begin{prop}  \label{prop:6.1-b}
The coupling constants
\begin{equation} 
 \label{eq:6.3-b}
\Ga_{j \ell}^i := \tfrac{c}{\la^{3/2}}\tfrac{q_j q_\ell}{q_i}  \ga_{j \ell}^i,
\end{equation}
satisfy the trilinear condition:
\begin{equation} 
 \label{eq:7.tri}
\Ga_{j \ell}^i = \Ga_{\ell j}^i = \Ga_{i \ell}^j,
\end{equation}
for every $1\le i,j,\ell\le n$.
\end{prop}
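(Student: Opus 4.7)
The symmetry $\Gamma^i_{j\ell}=\Gamma^i_{\ell j}$ is trivial from Clairaut's theorem on equality of mixed partials (since $\tilde g_i$ is smooth in $\BF a$), and the prefactor $\frac{c}{\lambda^{3/2}}\frac{q_jq_\ell}{q_i}$ is already symmetric in $(j,\ell)$. So the only nontrivial content is the exchange of upper and lower indices, $\Gamma^i_{j\ell}=\Gamma^j_{i\ell}$. Recalling $q_k^2=\tilde g_k(\ab)$, this is equivalent to the identity
\begin{equation*}
\tilde g_j(\ab)\,\partial_{a^j}\partial_{a^\ell}\tilde g_i(\ab)\;=\;\tilde g_i(\ab)\,\partial_{a^i}\partial_{a^\ell}\tilde g_j(\ab),
\end{equation*}
which is what I will establish.

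The key input is Lemma \ref{lem:2.1}, which says $\nabla\BF\lambda(\BF a)=\Gamma(\BF a)^{-1}$. Since $\lambda^i=\log\tilde g_i(\BF a)$ (from $\varphi^i=\tilde g_i$ and the definition of the chemical potential), this reads
\begin{equation*}
\frac{\partial_{a^j}\tilde g_i(\BF a)}{\tilde g_i(\BF a)}\;=\;[\Gamma(\BF a)^{-1}]_{ij}.
\end{equation*}
Because $\Gamma(\BF a)$ is a covariance matrix and hence symmetric, so is $\Gamma(\BF a)^{-1}$, and therefore in a neighborhood of $\ab$ we have the functional identity
\begin{equation*}
\tilde g_j(\BF a)\,\partial_{a^j}\tilde g_i(\BF a)\;=\;\tilde g_i(\BF a)\,\partial_{a^i}\tilde g_j(\BF a).
\end{equation*}

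I would then differentiate this identity with respect to $a^\ell$ and evaluate at $\BF a=\ab$. This produces
\begin{equation*}
(\partial_{a^\ell}\tilde g_j)(\partial_{a^j}\tilde g_i)\big|_{\ab}+\tilde g_j(\ab)\,\gamma^i_{j\ell}\;=\;(\partial_{a^\ell}\tilde g_i)(\partial_{a^i}\tilde g_j)\big|_{\ab}+\tilde g_i(\ab)\,\gamma^j_{i\ell}.
\end{equation*}
By the Frame condition (FC), $\partial_{a^m}\tilde g_k(\ab)=\lambda\,\delta_{km}$, so the two cross-product terms both reduce to $\lambda^2\,\delta_{\ell j}\delta_{ji}=\lambda^2\,\delta_{\ell i}\delta_{ij}$ (nonzero only when all three indices coincide), and hence cancel. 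What remains is exactly the desired identity $\tilde g_j(\ab)\,\gamma^i_{j\ell}=\tilde g_i(\ab)\,\gamma^j_{i\ell}$, which after multiplying by $\frac{c\,q_\ell}{\lambda^{3/2}q_iq_j}$ yields $\Gamma^i_{j\ell}=\Gamma^j_{i\ell}$.

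The only step that required genuine care is step three: extracting the symmetric-index exchange from Lemma \ref{lem:2.1} and then realizing that, when differentiating the resulting functional identity once more and evaluating at $\ab$, the first-order cross terms automatically cancel thanks to (FC). Everything else is bookkeeping of Clairaut symmetry and the definition of $\Gamma^i_{j\ell}$ in \eqref{eq:6.3-b}.
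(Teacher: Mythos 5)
Your proof is correct, and while it starts from the same two ingredients as the paper (Lemma \ref{lem:2.1} and the Frame condition (FC)), the route through them is genuinely cleaner. You first invoke the symmetry of $\Gamma(\BF a)^{-1}$ \emph{as a scalar functional identity} $\tilde g_j\,\partial_{a^j}\tilde g_i=\tilde g_i\,\partial_{a^i}\tilde g_j$ valid in a neighborhood of $\ab$, then differentiate it once in $a^\ell$ and observe that (FC) kills the first-order cross terms at $\ab$, leaving exactly $q_j^2\gamma^i_{j\ell}=q_i^2\gamma^j_{i\ell}$; multiplying by $\tfrac{c\,q_\ell}{\lambda^{3/2}q_iq_j}$ and combining with the trivial Clairaut symmetry in the lower pair generates all of $S_3$. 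The paper instead expands $\partial_{a^\ell}\nabla\BF\fa$ by the product rule and must compute $\partial_{\fa^m}{\rm cov}(\bar\nu_{\boldsymbol\fa})$ explicitly in terms of third moments, checking permutation symmetry of each of the two resulting pieces separately after multiplying by $q_jq_\ell/q_i$. Your argument sidesteps the third-moment computation entirely and localizes the role of (FC) to a single cancellation, at the mild cost of not exhibiting the underlying cumulant structure that the paper's computation makes visible. Both proofs are valid; yours is shorter and more conceptual, the paper's is more explicit about what the coupling constants actually are.
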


\begin{rem}\rm
If the coupling constants 
$(\Ga_{j \ell}^i)_{i,j, \ell}$ satisfy the trilinear condition,
the coupled KPZ-Burgers equation has the distribution of the white noise
as its invariant measure, see \cite{FH17}. One can expect that the converse would be also true, see \cite{F18}.
\end{rem}

\begin{proof}
Recall that $\boldsymbol\fa( \BF a) = (\tilde g_i( \BF a))_{i=1}^n$ is defined by the relation
$$
a^i=R_i(\boldsymbol\fa) = \frac1{Z_{\boldsymbol\fa}}\sum_\Bk k^i \frac{\boldsymbol\fa^\Bk}{\Bg!(\Bk)},
\quad 1\le i \le n.
$$
Consider the notation 
$$\langle f(\Bk)\rangle_{\bar\nu_{\boldsymbol\fa}}:= \frac{1}{Z_{\boldsymbol\fa}}\sum_{\Bk} f(\Bk)\frac{\boldsymbol\fa^{\Bk }}{\Bg!(\Bk)}.$$
It will be convenient now to revisit the proof of Lemma \ref{lem:2.1}:  Write
\begin{align}  
\label{eq:7.first}
\partial_{\varphi^j} a^i 
& = \frac1{Z_{\boldsymbol\fa}}\sum_\Bk k^i k^j\frac{\boldsymbol\fa^{\Bk-\de_j}}{\Bg!(\Bk)}
- \frac1{Z_{\boldsymbol\fa}^2}\sum_\Bk k^i \frac{\boldsymbol\fa^\Bk}{\Bg!(\Bk)}
\; \sum_\Bk k^j \frac{\boldsymbol\fa^{\Bk-\de_j}}{\Bg!(\Bk)}  \\
& = \frac1{\fa^j} \left( \lan k^ik^j\ran_{\bar\nu_{\boldsymbol\fa}}
- \lan k^i\ran_{\bar\nu_{\boldsymbol\fa}}
\lan k^j\ran_{\bar\nu_{\boldsymbol\fa}}\right) 
\notag  \\
& = \frac1{\fa^j}  \Big({\rm cov}(\bar\nu_{\boldsymbol\fa})\big)_{ij},
\notag
\end{align}
where $\de_j\in \PZ^n$ is defined by $(\de_j)_i=\de_{ij}$ for $1\le i \le n$.
Then, 
\begin{equation}  \label{eq:7.3}
\nabla \BF a
\equiv \left({\partial_{\fa^j} \, a^i} \right)_{ij}
= {\rm cov}(\bar\nu_{\boldsymbol\fa})
\; {\rm diag}\Big(\Big(\frac1{\fa^j}\Big)_{1\le j\le n}\Big),
\end{equation}
so that
\begin{equation}  \label{eq:7.4}
\nabla \BF \varphi 
\equiv \left({\partial_{a^j}\,  \fa^i} \right)_{ij}
= \Big({\rm diag} \big(\tilde g_i\big)_{1\le i\le n}\Big) \;  {\rm cov}(\bar\nu_{\boldsymbol\fa})^{-1},
\end{equation}
or equivalently, as in Lemma \ref{lem:2.1},
\begin{equation}
\label{eq:6.8-a}
{\partial_{a^j} \, \tilde g_i}= \tilde g_i \, \big({\rm cov}(\bar\nu_{\boldsymbol\fa})^{-1}\big)_{ij}.
\end{equation}
Taking a further derivative of  \eqref{eq:7.4}, we have
\begin{align}  \label{eq:7.5}
\partial_{a^\ell} \nabla \BF \fa
& = {\partial_{a^\ell}} \Big( {\rm diag}\, (\tilde g_i)_{1\le i\le n}\Big) \, {\rm cov}(\bar\nu_{\boldsymbol\fa})^{-1}
+ \Big({\rm diag}\, ((\tilde g_i)_{1\le i\le n})\Big) \, \partial_{a^\ell}\Big({\rm cov}(\bar\nu_{\boldsymbol\fa})^{-1}\Big).
\end{align}
By \eqref{eq:7.4} again, the first term in the right hand side of \eqref{eq:7.5} is equal to
$$
{\rm diag} \Big(\big(\tilde g_i  \big( {\rm cov}(\bar\nu_{\boldsymbol\fa})^{-1} \big)_{i\ell}\big)_{1\le i\le n}  \Big)
\; {\rm cov}(\bar\nu_{\boldsymbol\fa})^{-1},
$$
whose $ij$-component is
\begin{equation}  \label{eq:7.6}
\tilde g_i  \big( {\rm cov}(\bar\nu_{\boldsymbol\fa})^{-1} \big)_{i\ell}
\big({\rm cov}(\bar\nu_{\boldsymbol\fa})^{-1}\big)_{ij}.
\end{equation}
By Proposition \ref{prop:4.2}, when $\BF a= \ab$, ${\rm cov}(\bar\nu_{\boldsymbol\fa})$ and therefore
${\rm cov}(\bar\nu_{\boldsymbol\fa})^{-1}$ is a diagonal matrix and, after multiplying by
$\frac{q_jq_\ell}{q_i}$, noting $\tilde g_i = q_i^2$,  \eqref{eq:7.6} becomes
$$
q_iq_jq_\ell \cdot \de_{i\ell}\de_{ij} \big( {\rm cov}(\bar\nu_{\boldsymbol\fa})^{-1} \big)_{ii}^2.
$$
This term is invariant under the permutations of $i,j,\ell$.

The final task is to show the invariance of the second term of \eqref{eq:7.5} after multiplying
$\frac{q_jq_\ell}{q_i}$.  We first note that in general for $n\times n$ non-degenerate matrix
$A(\BF a)$ which is smooth in $\BF a$, we have
$$
\partial_{a^\ell} A^{-1}(\BF a) = - A^{-1}(\BF a) \, \big[\partial_{a^\ell} { A(\BF a)} \big]\,  A^{-1}(\BF a).
$$
Indeed, this follows from
$$
0 = \partial_{a^\ell} \left( A(\BF a) A^{-1}(\BF a)\right)
= \partial_{a^\ell} A(\BF a) \;  A^{-1}(\BF a)+ A(\BF a)\;  \partial_{a^\ell} A^{-1}(\BF a).
$$
In particular, using again (\ref{eq:7.4}), we have
\begin{align*}
&\big(\partial_{a^\ell} \, {\rm cov}(\bar\nu_{\boldsymbol\fa})^{-1}\big)_{ij}
 = - \sum_{p,q=1}^n \big({\rm cov}(\bar\nu_{\boldsymbol\fa})^{-1}\big)_{ip}
\; \partial_{a^\ell} \big({\rm cov}(\bar\nu_{\boldsymbol\fa})\big)_{pq} \; 
\big({\rm cov}(\bar\nu_{\boldsymbol\fa})^{-1}\big)_{qj}  \\
& = - \sum_{m,p,q=1}^n \big({\rm cov}(\bar\nu_{\boldsymbol\fa})^{-1}\big)_{ip}
\; \partial_{a^\ell} {\fa^m} \, {\partial_{\fa^m}  \big({\rm cov}(\bar\nu_{\boldsymbol\fa})\big)_{pq}}\;
\big({\rm cov}(\bar\nu_{\boldsymbol\fa})^{-1}\big)_{qj}  \\
& = - \sum_{m,p,q=1}^n  \big({\rm cov}(\bar\nu_{\boldsymbol\fa})^{-1}\big)_{ip}
\; \fa_m 
\; \big({\rm cov}(\bar\nu_{\boldsymbol\fa})^{-1}\big)_{m\ell}
\; {\partial_{\fa_m} \big({\rm cov}(\bar\nu_{\boldsymbol\fa})\big)_{pq}}
\big({\rm cov}(\bar\nu_{\boldsymbol\fa})^{-1}\big)_{qj}.
\end{align*}
Here, we need to compute and have similarly to \eqref{eq:7.first} that
\begin{align*}
&\Big({\partial_{\fa^m}\;   {\rm cov}(\bar\nu_{\boldsymbol\fa})\Big)_{ij}} 
 =
\partial_{\fa^m} \, \left( \lan k^i k^j\ran_{\bar\nu_{\boldsymbol\fa}}
- \lan k^i\ran_{\bar\nu_{\boldsymbol\fa}}
\lan k^j\ran_{\bar\nu_{\boldsymbol\fa}}\right) \\
& = \frac1{\fa_m}  \left( \lan k^i k^j k^m\ran_{\bar\nu_{\boldsymbol\fa}}
- \lan k^i k^j \ran_{\bar\nu_{\boldsymbol\fa}}
\lan k^m \ran_{\bar\nu_{\boldsymbol\fa}}\right)
- \frac1{\fa^m}  \left( \lan k^i k^m \ran_{\bar\nu_{\boldsymbol\fa}}
- \lan k^i\ran_{\bar\nu_{\boldsymbol\fa}}
\lan k^m\ran_{\bar\nu_{\boldsymbol\fa}}\right)\lan k^j\ran_{\bar\nu_{\boldsymbol\fa}}\\
& \hskip 40mm
- \frac1{\fa^m}  \left( \lan k^j k^m\ran_{\bar\nu_{\boldsymbol\fa}}
- \lan k^j\ran_{\bar\nu_{\boldsymbol\fa}}
\lan k^m\ran_{\bar\nu_{\boldsymbol\fa}}\right)\lan k^i\ran_{\bar\nu_{\boldsymbol\fa}} \\
& = \frac1{\fa^m}  \left( \lan k^i k^j k^m\ran_{\bar\nu_{\boldsymbol\fa}}
- \lan k^i k^j \ran_{\bar\nu_{\boldsymbol\fa}} \lan k^m\ran_{\bar\nu_{\boldsymbol\fa}}
- \lan k^j k^m\ran_{\bar\nu_{\boldsymbol\fa}} \lan k^i\ran_{\bar\nu_{\boldsymbol\fa}}  \right. \\
& \hskip 40mm \left.  
- \lan k^m k^i\ran_{\bar\nu_{\boldsymbol\fa}} \lan k^j\ran_{\bar\nu_{\boldsymbol\fa}}  
+ 2 \lan k^i\ran_{\bar\nu_{\boldsymbol\fa}}
\lan k^j\ran_{\bar\nu_{\boldsymbol\fa}}
\lan k^m\ran_{\bar\nu_{\boldsymbol\fa}}\right).
\end{align*}
Note that except $\frac1{\fa^m}$, which actually cancels with $\fa^m$ in the last formula,
the right hand side is invariant under the permutations of $i,j,m$.
Since the $ij$-component of the second term of \eqref{eq:7.5} is equal to
$$
\tilde g_i \; \big( \partial_{a^\ell} \, {\rm cov}(\bar\nu_{\boldsymbol\fa})^{-1}\big)_{ij}
$$
after multiplying by $\frac{q_jq_\ell}{q_i}$, this is invariant under the permutations of $i,j,\ell$.
This completes the proof of the trilinearity of $\Ga$.
\end{proof}

\section{Multi-colored systems}
\label{example_section}

We now discuss the special case of multi-colored zero-range processes.  Here, the dynamics is
determined as follows.  Let the jump rate $g:\PZ\to (0,\infty)$ of color-blind particles be
given and define the jump rates $g_i(\Bk)$ of $i$th colored particles by
\begin{equation}  \label{eq:7.1-b}
g_i(\Bk) = g(|\Bk|) \frac{k^i}{|\Bk|}, \quad 1 \le i \le n,
\end{equation}
where we recall $|\Bk| = k^1+\cdots+k^n$ for $\Bk=(k^1,\ldots,k^n)$.  In other words, at time $t=0$,
we paint every particle in $n$-different colors.  They evolve due to the color-blind particles
dynamics and the evolution of colors is determined simply by choosing a particle with equal
probability when a jump is going to happen.  This gives the factor $\frac{k^i}{|\Bk|}$.
The jump rates $g_i(\Bk)$ given by \eqref{eq:7.1-b} satisfy all necessary conditions.

Now, the invariant measures are given by
$$
\nu_{\BF a} (\Bk) = \nu_\rho(|\Bk|) \frac{|\Bk|!}{k^1!\cdots k^n!} \prod_{i=1}^n
\left( \frac{a^i}\rho\right)^{k^i}.
$$
for $\Bk= (k^1,\ldots,k^n)$, $\BF a= (a^1,\ldots,a^n)$, $\rho = a^1+\cdots + a^n$, and
$$
\nu_\rho(k) \equiv \bar\nu_{\varphi(\rho)}(k)=
\frac1{Z_{\varphi (\rho)}} \frac{(\varphi (\rho))^k}{g(k)!}
$$
is the distribution of the color-blind particles. Here the $1:1$ correspondance $\rho \to \varphi (\rho)$ is similar to the correspondance described in Subsection \ref{invariant_section}.  

\subsection{Frame condition}

We show in Proposition \ref{prop:7.4-a} below that the Frame condition (FC)
holds exactly for $\ab$ when $\rho_0 = a_0^1+\ldots+a_0^n$ satisfies
\begin{equation}
\label{eq:Seth}
(\partial_\rho\fa)(\rho_0) = \frac{\fa(\rho_0)}{\rho_0}.
\end{equation}
Since  $\rho(\varphi) = Z_\varphi^{-1}\sum k\varphi^k/g(k)!$ and so $(\partial_\rho\fa)(\rho_0) = \fa(\rho_0)/{\sigma^2(\rho_0)}$, equivalent to \eqref{eq:Seth} is the condition 
\begin{equation}
\label{balance}
\sigma^2(\rho_0)=\rho_0
\end{equation}
where $\sigma^2(\rho)$ is the variance of the law $\nu_\rho$.

In the $n$-color setting, the conditions given
in Proposition \ref{prop:4.2} are read as
\begin{equation}  \label{eq:5.2}
\V_{ij}(\ab) =0, \quad \tilde g_i (\ab) \V_{jj} (\ab) =\tilde g_j (\ab) \V_{ii} (\ab) ,
\end{equation}
for every $i\not= j$.  

We now develop a series of computations leading to Proposition \ref{prop:7.4-a}.  We will use the notation $\langle f(\Bk)\rangle_{\nu_{\BF a}} = E_{\nu_{\BF a}}[f(\Bk)]$ in the sequel.

\begin{lem}  \label{lem:5.1}
We have
\begin{align*}
& \lan k^i\ran_{\nu_{\BF a}} = a^i, \\
& \lan k^i k^j \ran_{\nu_{\BF a}} = \frac{a^ia^j}{\rho^2} \lan k(k-1)\ran_{\nu_\rho},
\quad \text{ if } \;  i\not= j, \\
& \lan [k^i]^2\ran_{\nu_{\BF a}} = \frac{(a^i)^2}{\rho^2} \lan k(k-1)\ran_{\nu_\rho} + a^i,  \\
& \tilde g_i( {\BF a}) = \frac{\fa(\rho) a^i}\rho.
\end{align*}
\end{lem}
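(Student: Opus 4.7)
The key observation is that the measure $\nu_{\BF a}$ has a two-step structure: the marginal law of $|\Bk|$ is the single-species measure $\nu_\rho$, and conditionally on $|\Bk|=m$ the vector $\Bk$ is multinomial with parameters $m$ and probabilities $(a^i/\rho)_{i=1}^n$. Indeed, this is immediate from the formula
$$\nu_{\BF a}(\Bk) = \nu_\rho(|\Bk|) \cdot \frac{|\Bk|!}{k^1! \cdots k^n!}\prod_{i=1}^n \Big(\frac{a^i}{\rho}\Big)^{k^i},$$
since the second factor is precisely the multinomial weight on $\{|\Bk|=m\}$. All four identities will then follow by computing multinomial moments conditional on $|\Bk|$ and averaging against $\nu_\rho$.

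For the first three identities, I will apply the tower property $\lan f(\Bk)\ran_{\nu_{\BF a}} = \lan \lan f(\Bk) \mid |\Bk|\ran_{\nu_{\BF a}}\ran_{\nu_\rho}$ together with the standard multinomial formulas
$$\lan k^i \mid |\Bk|=m\ran = m\,\frac{a^i}{\rho}, \quad \lan k^i k^j \mid |\Bk|=m\ran = m(m-1)\frac{a^i a^j}{\rho^2} \; (i\neq j),$$
and $\lan [k^i]^2 \mid |\Bk|=m\ran = m(m-1)(a^i/\rho)^2 + m(a^i/\rho)$. Taking $\nu_\rho$-expectations, and using that $\lan k\ran_{\nu_\rho}=\rho$ by definition of the density, yields $\lan k^i\ran_{\nu_{\BF a}} = a^i$; the off-diagonal and diagonal second moments follow analogously, producing the factor $\lan k(k-1)\ran_{\nu_\rho}$ as written.

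For the fourth identity, I insert the explicit form $g_i(\Bk) = g(|\Bk|)\, k^i/|\Bk|$ into the definition of $\tilde g_i({\BF a})$ and again decompose according to the value of $|\Bk|$. Using $\lan k^i \mid |\Bk|=m\ran = m\,a^i/\rho$, the factor $m$ cancels with $1/m$, giving
$$\tilde g_i({\BF a}) = \frac{a^i}{\rho}\sum_m g(m)\,\nu_\rho(m) = \frac{a^i}{\rho}\,\tilde g(\rho),$$
where $\tilde g(\rho):=\lan g(k)\ran_{\nu_\rho}$. It remains to identify $\tilde g(\rho)$ with the fugacity $\varphi(\rho)$; but this is exactly the single-species case of the general relation \eqref{eq:aveg}, which in that setting reads $\tilde g(\rho) = \varphi(\rho)$. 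This gives $\tilde g_i({\BF a}) = \varphi(\rho)a^i/\rho$ as claimed.

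There is no genuine obstacle here; the content of the lemma is a bookkeeping consequence of the multinomial structure of $\nu_{\BF a}$ conditional on the total mass. The only point requiring a small verification is the identity $\tilde g(\rho) = \varphi(\rho)$ for the color-blind system, which is a direct instance of \eqref{eq:aveg}.
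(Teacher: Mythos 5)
Your proposal is correct and follows essentially the same route as the paper's proof: the paper's explicit summations over $\Bk$ with $|\Bk|=k$ fixed are precisely your conditional multinomial moment computations, followed by averaging against $\nu_\rho$. Your phrasing of the structure (mixture of multinomials with mixing law $\nu_\rho$) is a clean conceptual restatement of the same calculation, and your verification that $\tilde g(\rho)=\varphi(\rho)$ via the single-species instance of \eqref{eq:aveg} matches the paper's (terser) treatment of the fourth identity.
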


\begin{proof}
We have
\begin{align*}
\lan k^i\ran_{\nu_{\BF a}}
& = \sum_k \nu_\rho(k) k \frac{a^i}\rho
\sum {}'
\frac{(k-1)!}{k^1!\cdots (k^i-1)! \cdots k^n!} \prod_{j=1}^n
\left( \frac{a^j}\rho\right)^{k^j- \de_{ij}}\\
& = \frac{a^i}\rho \sum_k k \nu_\rho(k)
= a^i,
\end{align*}
where sum $\sum'$ is over $\Bk$ such that
$k^1+\cdots + (k^i-1)+\cdots+k^n = k-1$.
Also, for $i\neq j$,
\begin{align*}
\lan k^i k^j \ran_{\nu_{\BF a}}
& = \sum_k \nu_\rho(k) k(k-1) \frac{a^i}\rho\frac{a^j}\rho  \\
& \qquad \times
\sum{}''
\frac{(k-2)!}{k^1!\cdots (k^i-1)! \cdots (k^j-1)! \cdots k^n!} \prod_{\ell=1}^n
\left( \frac{a^\ell}\rho\right)^{k_\ell- \de_{\ell i} - \de_{\ell j}}\\
& = \frac{a^i a^j}{\rho^2}\lan k(k-1)\ran_{\nu_\rho}, 
\end{align*}
where the sum $\sum''$ is over $\Bk$ such that $k^1+\cdots + (k^i-1)+\cdots+(k^j-1)+ \cdots +k^n = k-2$.
Moreover,
\begin{align*}
\lan [k^i]^2 \ran_{\nu_{\BF a}}
& = \lan k^i(k^i-1) \ran_{\nu_{\BF a}} + \lan k_i \ran_{\nu_{\BF a}}  \\
& = \sum_k \nu_\rho(k) k(k-1) \left(\frac{a^i}\rho\right)^2
+ \lan k^i \ran_{\nu_a}  \\
& = \frac{(a^i)^2}{\rho^2}\lan k(k-1)\ran_{\nu_\rho} + a^i.
\end{align*}
The last identity for $\tilde g_i( \BF a)$ is obtained by noting 
$$
g_i(\Bk) = g(k) \frac{k^i}k,
$$
in the present setting.
\end{proof}

\begin{lem} \label{lem:5.2}
We have
\begin{align}
 & \la \cfrac{d Z_\la}{d\lambda}  = \rho Z_\la,   \label{eq:Z1}  \\
 & \la^2 \cfrac{d^2 Z_\la}{d\lambda^2} = \lan k(k-1)\ran_{\nu_\rho} Z_\la,  \label{eq:Z2}
\end{align}
where $\rho$ and $\la$ are related by
$\fa(\rho) = \lan g(k)\ran_{\nu_\la} = \la.$
\end{lem}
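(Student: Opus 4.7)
The plan is to prove both identities directly by term-by-term differentiation of the series defining $Z_\lambda$, using only the explicit formula
$$Z_\lambda = \sum_{k\geq 0}\frac{\lambda^k}{g(k)!}$$
together with the one-site marginal $\nu_\lambda(k) = \lambda^k / (Z_\lambda\, g(k)!)$. No deep input is required; the lemma is essentially a bookkeeping identity between the partition function and the moments of the single-species measure.

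For the first identity, I would differentiate the series termwise to get $\frac{dZ_\lambda}{d\lambda} = \sum_{k\geq 1} k \lambda^{k-1}/g(k)!$, multiply by $\lambda$, and recognize the result as $Z_\lambda \sum_{k\geq 0} k\, \nu_\lambda(k) = Z_\lambda \langle k\rangle_{\nu_\lambda}$. Since $\nu_\lambda = \nu_\rho$ under the correspondence $\varphi(\rho) = \lambda$, and since $\rho$ is by definition the mean of $\nu_\rho$, this gives $\lambda\, dZ_\lambda/d\lambda = \rho Z_\lambda$, which is \eqref{eq:Z1}. (As a side consistency check, the relation $\langle g(k)\rangle_{\nu_\lambda} = \lambda$ asserted in the hypothesis follows from the same kind of computation, telescoping $g(k)/g(k)! = 1/g(k-1)!$, and confirms that the notational identification $\varphi(\rho)=\lambda$ used in the statement is coherent.)

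For the second identity, differentiate twice: $\frac{d^2 Z_\lambda}{d\lambda^2} = \sum_{k\geq 2} k(k-1) \lambda^{k-2}/g(k)!$, then multiply by $\lambda^2$ to obtain $\sum_{k\geq 0} k(k-1) \lambda^k / g(k)! = Z_\lambda \langle k(k-1)\rangle_{\nu_\lambda}$, which is \eqref{eq:Z2}. Termwise differentiation is justified since $Z_\lambda$ has positive radius of convergence on $\mathrm{Dom}_Z$ under assumption (ORI), so the power series can be differentiated inside its disk of convergence.

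There is no real obstacle; the only point to be careful about is ensuring $\lambda$ lies in the interior of the domain of convergence of $Z_\lambda$ so that termwise differentiation is legitimate, which is guaranteed on $\ocirc{\mathrm{Dom}}_Z$ by (ORI). Thus the whole argument is a two-line manipulation of power series, and the lemma will be used subsequently only as an algebraic input into the multi-colored computations verifying the Frame condition.
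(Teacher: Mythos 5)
Your proof is correct and is exactly the paper's approach: the paper also just recalls $Z_\lambda = \sum_k \lambda^k/g(k)!$ and differentiates term by term, with your write-up supplying the details the paper leaves implicit.
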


\begin{proof}
Recall
$$
Z_\la = \sum_k \frac{\la^k}{g(k)!}
$$
and take the derivatives in $\la$.
\end{proof}

\begin{lem} \label{lem:5.3}
We have $\V_{ij} (\ab) =0$ for $i\not= j$ is equivalent to \eqref{eq:Seth}.
\end{lem}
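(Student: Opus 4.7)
The plan is to write $\Gamma_{ij}(\ab)$ for $i\ne j$ explicitly in terms of moments of the single-color marginal $\nu_\rho$, reduce the vanishing condition to a relation between the variance $\sigma^2(\rho_0)$ and the density $\rho_0$, and then use the derivative identities in Lemma \ref{lem:5.2} to convert that into \eqref{eq:Seth}.

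First I would apply the second and third identities of Lemma \ref{lem:5.1}: for $i\ne j$,
\begin{equation*}
\Gamma_{ij}(\BF a) \;=\; \langle k^i k^j\rangle_{\nu_{\BF a}} - \langle k^i\rangle_{\nu_{\BF a}}\langle k^j\rangle_{\nu_{\BF a}}
\;=\; \frac{a^i a^j}{\rho^2}\,\langle k(k-1)\rangle_{\nu_\rho} \;-\; a^i a^j
\;=\; \frac{a^i a^j}{\rho^2}\bigl(\langle k(k-1)\rangle_{\nu_\rho}-\rho^2\bigr).
\end{equation*}
Since we are interested in $\ab\in(0,\infty)^n$, the prefactor $a_0^i a_0^j/\rho_0^2$ is nonzero, so $\Gamma_{ij}(\ab)=0$ for some (equivalently every) pair $i\ne j$ is equivalent to $\langle k(k-1)\rangle_{\nu_{\rho_0}}=\rho_0^2$. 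Writing $\sigma^2(\rho)=\langle k^2\rangle_{\nu_\rho}-\rho^2=\langle k(k-1)\rangle_{\nu_\rho}+\rho-\rho^2$, this in turn is equivalent to $\sigma^2(\rho_0)=\rho_0$, which is the balance condition \eqref{balance}.

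Next I would identify \eqref{balance} with \eqref{eq:Seth} using Lemma \ref{lem:5.2}. From $\lambda\, dZ_\lambda/d\lambda = \rho Z_\lambda$ one has $\rho(\lambda)=\lambda Z_\lambda'/Z_\lambda$, so
\begin{equation*}
\frac{d\rho}{d\lambda} \;=\; \frac{Z_\lambda'+\lambda Z_\lambda''}{Z_\lambda} - \frac{\lambda (Z_\lambda')^2}{Z_\lambda^2}
\;=\; \frac{1}{\lambda}\Bigl(\rho + \lambda^2\tfrac{Z_\lambda''}{Z_\lambda}-\rho^2\Bigr)
\;=\; \frac{\rho+\langle k(k-1)\rangle_{\nu_\rho}-\rho^2}{\lambda}
\;=\; \frac{\sigma^2(\rho)}{\lambda},
\end{equation*}
where the second-to-last equality uses \eqref{eq:Z2}. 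Inverting and recalling $\lambda=\varphi(\rho)$ yields $(\partial_\rho\varphi)(\rho)=\varphi(\rho)/\sigma^2(\rho)$, so $(\partial_\rho\varphi)(\rho_0)=\varphi(\rho_0)/\rho_0$ is exactly $\sigma^2(\rho_0)=\rho_0$. Chaining the two equivalences concludes the proof.

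There is no real obstacle here: the argument is a short computation, and all the needed identities are supplied by Lemmas \ref{lem:5.1} and \ref{lem:5.2}. The only thing to be slightly careful about is that $\ab$ lies in $(0,\infty)^n$ so we may divide by $a_0^i a_0^j$, and that the reasoning holds uniformly across pairs $i\ne j$, giving the full off-diagonal vanishing from the single scalar relation on $\rho_0$.
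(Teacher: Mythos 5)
Your proof is correct, and it is essentially the same computation as the paper's: you use the identical starting expression for $\Gamma_{ij}(\ab)$ from Lemma \ref{lem:5.1} and the identical input from Lemma \ref{lem:5.2}; the paper substitutes directly to write $\Gamma_{ij}(\ab)=\tfrac{a_0^ia_0^j}{\rho_0^2\fa'(\rho_0)}(\fa(\rho_0)-\rho_0\fa'(\rho_0))$, while you factor the same chain through the intermediate balance condition $\si^2(\rho_0)=\rho_0$, which the paper itself records right after stating \eqref{eq:Seth}.
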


\begin{proof} 
From Lemma \ref{lem:5.1}, we have
\begin{equation}  \label{eq:5.5}
\V_{ij} (\BF a) = \frac{a^i a^j}{\rho^2} \left(\lan k(k-1)\ran_{\nu_{\rho}} -\rho^2\right),
\end{equation}
for $i\not= j$.  On the other hand, \eqref{eq:Z1} shows
$$
\cfrac{d}{d\lambda}\, \log Z_\la= \frac{\rho}\la = \frac{\rho}{\fa(\rho)}.
$$
Since
$$
\cfrac{d^2}{d\lambda^2} \, \log Z_\la = \frac{\tfrac{d^2 Z_\la}{d\la^2} }{Z_\la} - \left( \frac{\tfrac{d Z_\la}{d\la} }{Z_\la} \right)^2,
$$
from \eqref{eq:Z2},
$$
\lan k(k-1)\ran_{\nu_\rho} = \la^2 \left\{\cfrac{d}{d\la} \,  \left(\frac{\rho}{\fa(\rho)}\right)
+ \left(\frac{\rho}{\fa(\rho)}\right)^2 \right\}.
$$
Since
$$
\frac{d}{d\la} = \frac{d\rho}{d\la}\frac{d}{d\rho}
= \frac1{\fa'(\rho)}\frac{d}{d\rho},
$$
we have
\begin{align}  \label{eq:5.6}
\lan k(k-1)\ran_{\nu_\rho} 
& = \fa(\rho)^2 
\left\{ \frac1{\fa'(\rho)} \cfrac{d}{d\rho}  \left(\frac{\rho}{\fa(\rho)}\right)
+ \left(\frac{\rho}{\fa(\rho)}\right)^2 \right\}  \\
& =  \frac1{\fa'(\rho)}  \left(\fa(\rho) -\rho\fa'(\rho)\right)
+ \rho^2,  \notag
\end{align}
where $'=\frac{d}{d\rho}$.  Thus, from \eqref{eq:5.5}, we have
\begin{equation*}
\V_{ij} (\BF a)  = \frac{a^ia^j}{\rho^2\fa'(\rho)}  \left(\fa(\rho) -\rho\fa'(\rho)\right).
\end{equation*}
This implies the conclusion.
\end{proof}

\begin{prop}  \label{prop:7.4-a}
In the present setting, the conditions in Proposition \ref{prop:4.2} (or \eqref{eq:5.2}) hold for $\ab$ if and only if \eqref{eq:Seth} is satisfied for the corresponding $\rho_0$.
\end{prop}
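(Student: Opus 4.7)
The plan is to reduce (FC) at $\ab$ to the single scalar equation \eqref{eq:Seth} by splitting (FC) into an off-diagonal part and a diagonal part, and to dispatch each using the simplifications available in the multi-colored setting. The off-diagonal part has already been handled in Lemma \ref{lem:5.3}, so only a short computation remains.

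By Proposition \ref{prop:4.2} (equivalently \eqref{eq:5.2}), the Frame condition at $\ab$ is the conjunction of (a) $\V_{ij}(\ab)=0$ for every $i\ne j$, and (b) the ratio $\tilde g_i(\ab)/\V_{ii}(\ab)$ is a constant $\la$ independent of $i\in\{1,\ldots,n\}$. Lemma \ref{lem:5.3} already asserts that (a) is equivalent to \eqref{eq:Seth}. Thus the direction (FC)$\Rightarrow$\eqref{eq:Seth} is immediate, and for the reverse direction it suffices to show that (a) forces (b) in the multi-colored setting.

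To this end, I will use the key byproduct of the proof of Lemma \ref{lem:5.3}: from formula \eqref{eq:5.5} (and assuming all $a_0^i>0$), condition (a) is equivalent to $\lan k(k-1)\ran_{\nu_{\rho_0}} = \rho_0^2$. Substituting this identity into the third line of Lemma \ref{lem:5.1} yields
\[
\V_{ii}(\ab) \;=\; \frac{(a_0^i)^2}{\rho_0^2}\lan k(k-1)\ran_{\nu_{\rho_0}} + a_0^i - (a_0^i)^2 \;=\; a_0^i, \qquad 1\le i\le n,
\]
while the last line of the same lemma gives $\tilde g_i(\ab) = \fa(\rho_0)\, a_0^i/\rho_0$. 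Dividing,
\[
\frac{\tilde g_i(\ab)}{\V_{ii}(\ab)} \;=\; \frac{\fa(\rho_0)}{\rho_0},
\]
which is manifestly independent of $i$. Hence (b) holds with $\la = \fa(\rho_0)/\rho_0$, and (FC) follows.

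No step here is delicate: the substantive calculation was already performed in Lemma \ref{lem:5.3} via the logarithmic derivatives of $Z_\la$ in Lemma \ref{lem:5.2}. What makes the present proposition clean is that, in the multi-colored setting, the joint moments under $\nu_{\BF a}$ factor through the color-blind marginals $\nu_\rho$ and multinomial weights, so the diagonal matching (b) is automatic once the off-diagonal vanishing (a) has been imposed; in other words, (FC) collapses to the single scalar equation $\sigma^2(\rho_0) = \rho_0$ on the total density.
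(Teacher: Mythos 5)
Your proof is correct and follows essentially the same route as the paper: invoke Lemma \ref{lem:5.3} for the off-diagonal equivalence, then note that under \eqref{eq:Seth} one has $\lan k(k-1)\ran_{\nu_{\rho_0}}=\rho_0^2$, hence $\V_{ii}(\ab)=a_0^i$ and $\tilde g_i(\ab)/\V_{ii}(\ab)=\fa(\rho_0)/\rho_0$ independent of $i$. The only cosmetic difference is that you extract $\lan k(k-1)\ran_{\nu_{\rho_0}}=\rho_0^2$ from \eqref{eq:5.5}, whereas the paper reads it off from \eqref{eq:5.6}; both are byproducts of Lemma \ref{lem:5.3} and lead to the same conclusion.
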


\begin{proof}
Lemma \ref{lem:5.3} shows that \eqref{eq:5.2} implies \eqref{eq:Seth}.
Therefore, we need only prove that \eqref{eq:Seth} implies $\tilde g_i (\ab) \V_{jj} (\ab) = \tilde g_j (\ab) \V_{ii} (\ab)$ for $i\not= j$.  Note that from Lemma \ref{lem:5.1}
\begin{align*}
\V_{ii} (\BF a) & = \lan [k^i]^2\ran_{\nu_{\BF a}} - \lan k^i\ran_{\nu_{\BF a}}^2 \\
& = \frac{(a^i)^2}{\rho^2} \lan k(k-1)\ran_{\nu_\rho}+a^i - (a^i)^2.
\end{align*}
But, if \eqref{eq:Seth} holds, $\lan k(k-1)\ran_{\nu_{\rho_0}}= \rho_0^2$ from \eqref{eq:5.6}.
Therefore $\V_{ii} (\ab) =a_0^i$.  Thus we obtain $\tilde g_i (\ab) \V_{jj} (\ab) = \tilde g_j (\ab) \V_{ii} (\ab)$ recalling Lemma \ref{lem:5.1} for $\tilde g_i$.
\end{proof}

\subsection{Multi-colored coupled KPZ-Burgers equations}
\label{multi_color_kpz}

We assume the condition \eqref{eq:Seth} is satisfied and we compute now the constants appearing in the KPZ-Burgers equation (\ref{eq:7.1}). By the fourth item of Lemma \ref{lem:5.1}, we have that
\begin{equation*}
\partial_{a^i} {\tilde g}_i (\ab) = \tfrac{\fa (\rho_0)}{\rho_0} + \partial_\rho \left( \tfrac{\fa (\rho)}{\rho} \right)\Big|_{\rho_0} \, a_0^i = \tfrac{\fa (\rho_0)}{\rho_0} 
\end{equation*}
and
\begin{equation*}
\partial_{a^j} \partial_{a^k} {\tilde g}_i (\ab) = [\delta_{ki}+\delta_{ji}]   \partial_\rho \left( \tfrac{\fa (\rho)}{\rho} \right)\Big|_{\rho_0} \, + \, a_0^i \partial^2_\rho \left( \tfrac{\fa (\rho)}{\rho} \right)\Big|_{\rho_0}= a_0^i \tfrac{\fa''(\rho_0)}{\rho_0}.
\end{equation*}
Hence we get
\begin{align}  
\label{eq:5.2_reduced}
\partial_t {\mathcal Y}_t^i = c_1 \De {\mathcal Y}_t^i + c_2a_0^i \nabla \left(\sum_{j=1}^n{\mathcal Y}_t^j\right)^2
+ c_3 \sqrt{a_0^i} \nabla \dot{\mathcal W}_t^i,
\end{align}
where $c_1= \frac12\tfrac{\fa(\rho_0)}{\rho_0}$,
$c_2 = c\tfrac{\fa''(\rho_0)}{\rho_0}$,
$c_3 = \sqrt{\tfrac{\fa(\rho_0)}{\rho_0}}$.
Or, if we write at KPZ level, that is when ${\mathcal Y}_t^i = \partial_u h_t^i$, we have
\begin{align}  \label{eq:5.3}
\partial_t h_t^i = c_1 \De h_t^i + c_2a_0^i \left(\nabla \sum_{j=1}^nh_t^j\right)^2
+ c_3 \sqrt{a_0^i} \dot{\mathcal W}_t^i, \quad i=1,2,\ldots, n.
\end{align}
One can actually {\textit{decouple}} the coupled KPZ equation \eqref{eq:5.3} as follows.
The process $H:= \sum_{i=1}^n h^i$, which corresponds to the color-blind system,
satisfies the scalar KPZ equation:
\begin{align*}
\partial_t H_t = c_1 \De H_t + c_2 \Big(\sum_{i=1}^n a_0^i\Big)  \;  (\nabla H_t)^2
+ c_3 \dot{\mathcal W}_t, \quad \dot{\mathcal W}_t := \sum_{i=1}^n \sqrt{a_0^i} \dot{\mathcal W}_t^i.
\end{align*}
On the other hand, $H^{ij}_t := a_0^j h^i - a_0^i h^j$ are Ornstein-Uhlenbeck processes 
satisfying
\begin{align*}
\partial_t H_t^{ij} = c_1 \De H_t^{ij} 
+ c_3 \dot{\mathcal W}_t^{ij}, \quad \dot{\mathcal W}_t^{ij} := \sqrt{a_0^i}a_0^j \dot{\mathcal W}_t^i - \sqrt{a_0^j}a_0^i \dot{\mathcal W}_t^j
\end{align*}
Moreover, it turns out that noises $\{\dot{\mathcal W}_t\}_{t\ge 0}$ and $\{\dot{\mathcal W}_t^{ij}\}_{t\ge 0}$ are independent, since (by multiplying by a test function)
$$
\E\left[\left(\sum_{k=1}^n \sqrt{a_0^k} w_t^k\right)
\left(\sqrt{a_0^i} a_0^j w_t^i - \sqrt{a_0^j}a_0^i w_t^j\right)\right]
= a_0^i a_0^j t - a_0^j a_0^i t =0.
$$

Now, the uniqueness of the (stationary) energy solutions of our coupled KPZ-Burgers equation
follows from the uniqueness of (stationary) scalar-valued KPZ-Burgers equation and Ornstein-Uhlenbeck
processes, respectively, and independence of these processes.  Since, such uniqueness also holds for systems on $\R$, as shown in \cite{GP18} (see also Subsection 5.3 \cite{FH17}), in this multi-color situation, we can formulate 
Theorem \ref{secondorder_thm} in the infinite volume on $\R$, as mentioned in Remark \ref{KPZ_rmk}.

Next, we show in the multi-color setting that the limit SPDE 
\eqref{gen_OU_sec} or \eqref{eq:7.1} is genuinely nonlinear. The previous discussion shows that the nonlinearity is equivalent to the condition $c_2 \ne 0$, namely there exist
a jump rate $g$ of color-blind particles and a density $\rho_0$, for which both
$\fa''(\rho_0) \not=0$ and the condition \eqref{eq:Seth} holds.
To simplify notation we write $\langle \cdot \rangle$ instead of $\langle\cdot \rangle_{\nu_a}$, where $\nu_a$ is the equilibrium distribution of the color-blind particles system. By \eqref{eq:7.5}, \eqref{eq:7.4} and computations made after \eqref{eq:7.5} with $n=1$, we have
\begin{align*}
\tilde g''(a) & = \fa''(a) = {\partial_{a} \fa} \; {\rm cov}(\bar\nu_\fa)^{-1}
+ \fa \; {\partial_a} \, {\rm cov}(\bar\nu_\fa)^{-1} \\
& = \fa \Big({\rm cov}(\bar\nu_\fa)^{-1}\big)^2 - \fa \Big({\rm cov}(\bar\nu_\fa)^{-1}\big)^3
\big( \lan k^3\ran - 3\lan k^2\ran\lan k\ran +2\lan k\ran^3\big) \\
& = - \fa \Big({\rm cov}(\bar\nu_\fa)^{-1}\big)^3
\big( \lan k^3\ran - 3\lan k^2\ran\lan k\ran +2\lan k\ran^3
-\lan k^2\ran + \lan k\ran^2 \big).
\end{align*}
Therefore, the condition $\fa ''(\rho_0)\not=0$ is equivalent to
\begin{align}  \label{eq:4.g-2}
\fa (\rho_0) \not=0 \quad \text{ and } \quad
\mathcal C (\rho_0) := \lan k^3\ran - 3\lan k^2\ran\lan k\ran +2\lan k\ran^3
-\lan k^2\ran + \lan k\ran^2 \not=0.
\end{align}
Thus our goal is to show that there exists a rate $g$ and a density $\rho_0$ such that \eqref{eq:4.g-2}
and the condition \eqref{balance}, that is equivalently $\si^2(\rho_0) = \rho_0$ or \eqref{eq:Seth} or
\begin{align} 
\label{eq:4.var-2}
\lan k^2\ran -\lan k\ran^2 = \lan k\ran
\end{align}
hold.

For example, in the simplest case with $g(k)=k$, $\bar\nu_\fa$ is a Poisson distribution and
\begin{align*}
& Z_\fa = \sum_{k=0}^\infty \frac{\fa^k}{k!} = e^\fa, \\
& \lan k\ran (=\rho) = \tfrac1{Z_\fa} \fa Z_\fa' = \fa, \\
& \lan k(k-1)\ran = \tfrac1{Z_\fa} \fa^2 Z_\fa'' = \fa^2, \\
& \lan k(k-1)(k-2)\ran = \tfrac1{Z_\fa} \fa^3 Z_\fa''' = \fa^3,
\end{align*}
which imply $\lan k^2\ran = \fa^2+\fa$ and $\lan k^3\ran = \fa^3+3\fa^2+\fa$.
In particular, $\rho=\fa$ and $\si^2(\rho) = \lan k^2\ran -\lan k\ran^2 = \fa = \rho$
so that the condition \eqref{eq:4.var-2} holds for all $\rho$.
On the other hand, one easily see
${\mathcal C}(\rho) = (\fa^3+3\fa^2+\fa) -3(\fa^2+\fa)\fa + 2 \fa^3 -(\fa^2+\fa)+\fa^2 =0$
so that  $\tilde g''(\rho)=0$ for all $\rho$.
Therefore, the nonlinearity in the limit SPDE becomes trivial if we take $g(k)=k$.

We believe however that \eqref{eq:4.g-2} holds for generic $g$.  To see this, let us consider
a perturbation $g(k)=k(1+h(k))$ of $g(k)=k$ with a function $h$ on $\PZ$ having 
a compact support.  Then, since $g(k)!= \frac{k!}{H(k)}$ with
$$
H(k) = \frac1{(1+h(k))!} = \frac1{(1+h(k))(1+h(k-1))\cdots (1+h(0))},
$$
we see
$$
\nu_h(k) \equiv \nu_{\fa,h}(k) = \frac1{Z_{\fa,h}}\frac{\fa^k}{g(k)!} = \frac{\fa^k H(k)}{Z_{\fa,h} k!},
$$
where
$$
Z_{\fa,h} = \sum_{k=0}^\infty \frac{\fa^k H(k)}{k!}.
$$
We denote now the expectation with respect to $\nu_h$ by $\lan \cdot\ran_h$; in particular,
$\lan\cdot\ran_0$ is the average under the Poisson distribution with parameter $\fa$.

\begin{lem}  \label{lem:4.7-a}
We have
\begin{align*}
& \lan k\ran_h = c_1\fa, \\
& \lan k(k-1) \ran_h = c_2\fa^2, \\
& \lan k(k-1)(k-2) \ran_h = c_3 \fa^3,
\end{align*}
where
$$
c_i = \frac{\lan H(k+i)\ran_0}{\lan H(k)\ran_0}, \quad i=1,2,3.
$$
In particular, the condition \eqref{eq:4.var-2} is equivalent to the statement that $c_1^2=c_2$ or $\fa=0$.
\end{lem}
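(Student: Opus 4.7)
The plan is to recognize that all three moments reduce to simple index shifts in the defining series, and that the normalization $Z_{\fa,h}$ is itself a Poisson expectation. The key preliminary identity is
\begin{equation*}
Z_{\fa,h} \;=\; \sum_{k\ge 0}\frac{\fa^k H(k)}{k!} \;=\; e^\fa \langle H(k)\rangle_0,
\end{equation*}
which rewrites the denominator of every expectation $\langle\cdot\rangle_h$ in terms of a Poisson average. This is what converts moment ratios into the stated ratios $c_i = \langle H(k+i)\rangle_0/\langle H(k)\rangle_0$.

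For the first moment, I would simply compute
\begin{equation*}
\langle k\rangle_h \;=\; \frac{1}{Z_{\fa,h}}\sum_{k\ge 1}\frac{\fa^k H(k)}{(k-1)!} \;=\; \frac{\fa}{Z_{\fa,h}}\sum_{k\ge 0}\frac{\fa^k H(k+1)}{k!} \;=\; \fa\,\frac{\langle H(k+1)\rangle_0}{\langle H(k)\rangle_0} \;=\; c_1\fa,
\end{equation*}
using the substitution $k\mapsto k+1$ together with the preliminary identity. The computations for $\langle k(k-1)\rangle_h$ and $\langle k(k-1)(k-2)\rangle_h$ follow the identical pattern with the substitutions $k\mapsto k+2$ and $k\mapsto k+3$, extracting $\fa^2$ and $\fa^3$ respectively.

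For the equivalence with \eqref{eq:4.var-2}, I would expand
\begin{equation*}
\langle k^2\rangle_h - \langle k\rangle_h^2 - \langle k\rangle_h \;=\; \langle k(k-1)\rangle_h - \langle k\rangle_h^2 \;=\; (c_2-c_1^2)\,\fa^2,
\end{equation*}
so condition \eqref{eq:4.var-2} is equivalent to $(c_2-c_1^2)\fa^2=0$, hence to $c_1^2=c_2$ or $\fa=0$. There is no serious obstacle here; the lemma is essentially bookkeeping, and the only conceptual point is the recognition that the perturbed partition function is a Poisson expectation of $H(k)$, after which every claim is a one-line index shift.
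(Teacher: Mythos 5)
Your proof is correct and follows essentially the same route as the paper's: the central observation that $Z_{\fa,h}=e^\fa\langle H(k)\rangle_0$, followed by an index shift to produce $\langle H(k+i)\rangle_0$ in the numerator. The paper reaches the same shifted sum via the derivatives $Z_{\fa,h}',Z_{\fa,h}'',\ldots$, but that is only a bookkeeping device and your direct reindexing is equivalent (and arguably cleaner); your verification of the equivalence with \eqref{eq:4.var-2} matches the paper's claim.
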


\begin{proof}
Compared with the case of $h\equiv 0$ (i.e., $H\equiv 1$), we have 
\begin{align*}
& Z_{\fa,h} = Z_{\fa,0}\lan H(k)\ran_0, \\
& Z_{\fa,h}' = \sum_{k=1}^\infty \frac{\fa^{k-1}H(k)}{(k-1)!} = Z_{\fa,0}\lan H(k+1)\ran_0.
\end{align*}
On the other hand,
$$
Z_{\fa,h}' = \sum_{k=1}^\infty \frac{k \fa^{k-1}H(k)}{k!} = \fa^{-1}\lan k\ran_h Z_{\fa,h}.
$$
Therefore,
$$
\lan k\ran_h = \fa \frac{Z_{\fa,h}'}{Z_{\fa,h}} = \fa \frac{\lan H(k+1)\ran_0.}{\lan H(k)\ran_0}.
$$
The rest is similar.
\end{proof}

\begin{exa}
\rm
Let $h(0)=\frac1a-1, h(1)= \frac{a}b-1, h(2) = \frac{b}c-1, h(3) = h(4)=\cdots =0$
with $a,b,c\not=0$.  Then, $H(0)=a, H(1)=b, H(2) =H(3)=H(4)=\cdots=c$.  Thus,
\begin{align*}
\lan H(k)\ran_0\quad &=e^{-\fa} (a+b\fa + \frac{c}{2}\fa^2+ \frac{c}{3!}\fa^3+\cdots) \\
& = e^{-\fa} (a+b\fa - c-c\fa+ce^\fa), \\
\lan H(k+1)\ran_0&=e^{-\fa} (b+c\fa + \frac{c}{2}\fa^2+ \cdots) \\
& = e^{-\fa} (b-c+ce^\fa), \\
\lan H(k+2)\ran_0&= \lan H(k+3)\ran_0=\cdots = c,
\end{align*}
and we set
$$
c_1= \frac{(b-c)+ce^\fa}{(a-c) +(b-c) \fa +ce^\fa}, \quad
c_2= \frac{ce^\fa}{(a-c) +(b-c) \fa +ce^\fa}.
$$
The condition \eqref{eq:4.var-2} is equivalent to \lq\lq$c_1^2=c_2$ or $\fa=0$" as we pointed out.
On the other hand, by Lemma \ref{lem:4.7-a} and using this relation, the condition \eqref{eq:4.g-2} is:
$\fa\not=0$ and 
\begin{align*}
0 &\not= \lan k^3\ran - 3\lan k^2\ran\lan k\ran +2\lan k\ran^3
-\lan k^2\ran + \lan k\ran^2 \\
& = \lan k(k-1)(k-2)\ran + 2\lan k(k-1)\ran - 3\lan k(k-1)\ran \lan k\ran
-2\lan k\ran^2 + 2\lan k\ran^3 \\
& = c_2\fa^3 + 2c_2\fa^2 - 3c_2\fa^2\cdot c_1\fa -2c_1^2\fa^2 + 2c_1^3\fa^3 \\
& = c_1^2(1-c_1) \fa^3.
\end{align*}
Therefore, our condition for both \eqref{eq:4.g-2} and \eqref{eq:4.var-2} to hold is summarized as 
\lq\lq$c_1^2=c_2, c_1\not=1, \fa\not=0$",
which is realized under a proper choice of $a, b, c$ and $\fa$. Indeed, let $\varphi=1$ and then the condition $c_1^2=c_2$  reduces to 
\begin{equation*}
(b-c)^2 + (b-c)ce = (a-c)ce,
\end{equation*}
which can be realized if we take $a=1/2$, $b=1/4$ for some $c<1/4$.
\end{exa}

\section{Non-decoupleable model}
\label{ND_section}

We show that one can construct a model which satisfies all
necessary conditions (ND), (LG), (INV), (ORI), (SG), (LB)
and the Frame condition (FC)
(cf.\ Proposition \ref{prop:4.2})
such that some nonlinear coupling terms appearing in the system of KPZ-Burgers equation are non zero and remain such even after any suitable change of variables, i.e. the  KPZ-Burgers system cannot be fully decoupled.

Let us first define what means for us to have a coupled system of KPZ-Burgers equation.  We consider the stochastic PDE \eqref{eq:7.1} in a canonical form,
 that is, in the form \eqref{eq:6.2-b} (especially, the noise term of the form
$\nabla\dot\W_t$) and define the coupling constants $\Ga_{j\ell}^i$ as in
\eqref{eq:6.3-b}.  Then, noting that $\si \dot{\mathcal{W}}_t$ remains to be an
$\R^n$-valued space-time white noise (in law) if $\si$ is an 
orthogonal matrix, we consider the corresponding transform: 
$\tilde{\mathcal{Y}}_t := \si\bar{\mathcal{Y}}_t$.  Then $(\Ga_{j\ell}^i)_{i,j,\ell}$
is transformed as
\begin{equation}
\label{eq:sigmagamma}
(\si\circ\Ga)_{j\ell}^i := \sum_{i',j',\ell'=1}^n  \si_{i i'} \Ga_{j'\ell'}^{i'}
\si_{j j' } \si_{\ell \ell'}.
\end{equation}

\begin{defn}
\rm
We say that the KPZ-Burgers system \eqref{eq:6.2-b} is fully decoupleable if there exists an orthogonal matrix $\sigma$ such that for any $i \in \{1,\ldots, n\}$, the coupling constants $(\si\circ\Ga)_{j\ell}^i$ are zero for any $(j,\ell) \ne (i,i)$. Otherwise we say it cannot be fully decoupled. 

We say that it is partially decoupleable if there exists an orthogonal matrix $\sigma$ such that there exists $i \in \{1,\ldots, n\}$ for which the coupling constants $(\si\circ\Ga)_{j\ell}^i$ are zero for any $(j,\ell) \ne (i,i)$. Otherwise we say it cannot be partially decoupled. 
\end{defn}

Observe that a KPZ-Burgers system that can be partially decoupleable without being fully decoupleable and that if a system is not fully decoupleable then its evolution is non-trivial. The article \cite{FH17} discusses the example of \cite{EK} for which one can find
a matrix $\tau$ such that $(\tau\circ\Ga)_{j\ell}^i =0$ for any $i$ and $(j,\ell) \ne (i,i)$ but the noise term $\tau\dot\W_t$ has correlated components since $\tau$ is not an orthogonal matrix. Hence, according to the previous definition, this does not mean that this system is fully decoupleable.  

A natural question is to know under which conditions a multi-species zero range process satisfying the conditions (ND), (LG), (INV), (ORI), (SG), (LB) and the Frame condition (FC)
(cf. Proposition \ref{prop:4.2}) gives rise to a fully decoupleable (resp. partially decoupleable) KPZ-Burgers system. We believe that generically this is not the case but this seems to be a non-trivial question in full generality. 

In this section, we restrict however to the case $n=2$, and give an example of a multi-species zero range process which is not fully decoupleable. In fact we cannot hope better for $2$-components models since the next proposition shows that any $2$-components KPZ-Burgers system satisfying the trilinear condition \eqref{eq:7.tri} is partially decoupleable.    

\begin{prop}
Let $n=2$ and consider a KPZ-Burgers system (\ref{eq:6.2-b}) with the coupling constants (\ref{eq:6.3-b}) satisfying the trilinear condition (\ref{eq:7.tri}). Then, there exists an orthogonal matrix which partially decouples the KPZ-Burgers system. 
\end{prop}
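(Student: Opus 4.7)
My plan is to parametrize $SO(2)$ by the rotation angle $\theta$, identify the totally symmetric tensor $\Gamma$ with a binary cubic form, and reduce the partial decoupling requirement to solving a polynomial equation. Setting
\[
A=\Gamma^1_{11},\quad B=\Gamma^1_{12}=\Gamma^2_{11},\quad C=\Gamma^1_{22}=\Gamma^2_{12},\quad D=\Gamma^2_{22}
\]
(the latter two identifications being consequences of \eqref{eq:7.tri}), the tensor $\Gamma$ is encoded by the binary cubic $P(x_1,x_2)=A x_1^3+3B x_1^2 x_2+3C x_1 x_2^2+D x_2^3$. Writing $\sigma_\theta\in SO(2)$ for the rotation of angle $\theta$ and $t=\tan\theta$, a direct expansion shows that $(\sigma_\theta\circ\Gamma)^1_{12}=0$ is equivalent, after division by $\cos^3\theta$, to the real cubic equation
\[
Ct^3+(2B-D)t^2+(A-2C)t-B=0.
\]
Since every real polynomial of odd degree admits a real root, this equation possesses a real solution $t_0=\tan\theta_0$. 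The trilinear identity \eqref{eq:7.tri} then forces $(\sigma_{\theta_0}\circ\Gamma)^2_{11}=0$ as well; the degenerate case $C=0$ yields a lower-degree polynomial and is handled directly.

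The heart of the proof is to then arrange the remaining off-diagonal coefficient $(\sigma_{\theta_0}\circ\Gamma)^1_{22}$ (equivalently, $(\sigma_{\theta_0}\circ\Gamma)^2_{12}$ by trilinearity) to vanish at the same $\theta_0$, which would yield partial decoupling at $i=1$. This second condition corresponds to the companion cubic
\[
Bt^3+(A-2C)t^2+(D-2B)t+C=0,
\]
related to the first by the involution $t\mapsto -1/t$---geometrically, the $\pi/2$ rotation of the orthonormal frame that exchanges the two off-diagonal coefficients. The natural route I would pursue is to package both conditions into the single complex function
\[
\tilde B(\theta)+i\tilde C(\theta)=\tfrac14\bigl(Y_1 e^{i\theta}+Y_3 e^{-3i\theta}\bigr),
\]
with explicit $Y_1=(B+D)+i(A+C)$ and $Y_3=(3B-D)-i(A-3C)$ read off from the cubic's coefficients, whereupon the simultaneous real vanishing of $\tilde B$ and $\tilde C$ reduces to solving the single equation $e^{4i\theta}=-Y_3/Y_1$ for a real angle $\theta$.

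The main obstacle I anticipate is the compatibility $|Y_1|=|Y_3|$ required for this last equation to possess a real solution, which unwinds to the algebraic identity $AC+BD=B^2+C^2$ among the cubic's coefficients. The trilinear symmetry alone produces only the identifications $B=\Gamma^2_{11}$ and $C=\Gamma^2_{12}$ used above, so I expect this identity to require the additional structural information carried by the KPZ--Burgers form \eqref{eq:6.3-b}: there, each $\Gamma^i_{j\ell}$ factors as $\tfrac{c}{\lambda^{3/2}}\tfrac{q_j q_\ell}{q_i}\gamma^i_{j\ell}$, and the compatibility condition rewrites into a relation among the second partial derivatives $\gamma^i_{j\ell}=\partial_{a^j}\partial_{a^\ell}\tilde g_i$ at $\ab$, which one may hope to establish by differentiating the Frame condition and invoking the Einstein-type identity of Lemma~\ref{lem:2.1}. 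Carrying out this final algebraic check cleanly is where the bulk of the technical work should be concentrated.
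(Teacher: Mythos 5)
Your Steps 1--2 --- reducing $(\sigma_\theta\circ\Gamma)^1_{12}=0$ to the real cubic $Ct^3+(2B-D)t^2+(A-2C)t-B=0$ in $t=\tan\theta$, observing an odd-degree real polynomial always has a real root, and handling the degenerate case $C=0$ via $\theta=\pi/2$ --- are correct and recover exactly what the paper proves by the intermediate value theorem applied to $F(\psi):=(\sigma_\psi\circ\Gamma)^1_{12}$, using $F(0)=\Gamma^2_{11}=-F(\pi)$. The paper stops there: its proof only annihilates the single cross-coupling coefficient $(\sigma\circ\Gamma)^1_{12}=(\sigma\circ\Gamma)^2_{11}$, and that is what the proposition is implicitly asserting.

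Everything after your Step 2 is chasing a false statement. You propose to additionally force $(\sigma_{\theta_0}\circ\Gamma)^1_{22}=0$ at the same angle, but under trilinearity $(\sigma\circ\Gamma)^1_{22}=(\sigma\circ\Gamma)^2_{12}$, so demanding both vanishings at once is precisely \emph{full} decoupling of the $n=2$ system; your reduction to $|Y_1|=|Y_3|$, i.e.\ $AC+BD=B^2+C^2$, correctly identifies the obstruction, but you hope it can be derived from the KPZ--Burgers form \eqref{eq:6.3-b} together with the Frame condition and Lemma~\ref{lem:2.1}. It cannot: the non-decoupleable example in Section~\ref{ND_section} (a two-species perturbation of independent random walks satisfying (ND), (LG), (INV), (ORI), (SG), (LB) and (FC)) has $F$ and $G$ --- your $\tilde B(\psi)$ and $\tilde C(\psi)$ up to normalization --- never vanishing simultaneously, so $AC+BD\neq B^2+C^2$ there. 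The root of the trouble is that the paper's definition of ``partially decoupleable,'' read literally, coincides with full decoupling when $n=2$ and $\Gamma$ is fully symmetric, as you in effect observed; the proof the paper actually gives establishes only the weaker single-coefficient statement, so you should stop after Step 2.
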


\begin{proof}
For $\psi \in [0, 2\pi]$ let us consider the orthogonal matrix $\sigma_{\psi}$ given by 
\begin{equation*}
\sigma_\psi = 
\left(
\begin{array}{cc}
\cos \psi & -\sin  \psi\\
\sin  \psi & \cos  \psi
\end{array}
\right). 
\end{equation*} 
The condition $(\sigma_\psi \circ \Gamma)_{12}^1 = 0$ is equivalent to 
\begin{align*}
F(\psi)=&\cos^2  \psi \sin  \psi \; \Gamma_{11}^1 +\sin^2  \psi \cos  \psi \; \Gamma_{22}^2\\ 
&+(\cos^3  \psi - 2 \cos  \psi \sin^2  \psi )\; \Gamma_{11}^2 + (\sin^3  \psi - 2 \sin  \psi \cos^2  \psi)\;  \Gamma_{22}^1 \; = \; 0. 
\end{align*}
The continuous function $F:[0,2\pi] \to \R$ satisfies 
$$F(0) = \Gamma_{11}^2, \quad F(\pi) = -\Gamma_{11}^2.$$
Hence by the intermediate value theorem there exists $\psi \in [0,2\pi]$ such that  $F(\psi)=0$, which concludes the proof.   
\end{proof}

\subsection{Perturbation of models keeping the compatibility condition (INV)} 
 \label{sec:8.2}

The condition (INV) is equivalent to that there exists a function
$G:\Z_+^n \to \R$ such that $G({\BF 0})=0$ (normalization) and
$$
\log g_i(\Bk) = G(\Bk)-G(\Bk^i,k^i-1),
$$
see \cite{GS03}.  We perturb $G$ as
$$
G^\la(\Bk) = G(\Bk) - \log \la(\Bk),
$$
with some $\la=\{\la(\Bk)>0\}$ such that $\la({\BF 0})=1$.  Then, introducing the superscript $\lambda$ to signify the perturbation,
\begin{align*}
\log g_i^\la(\Bk) &= G^\la(\Bk)-G^\la(\Bk^i,k^i-1)\\
&= \log g_i(\Bk) - \log \la(\Bk) + \log \la(\Bk^i,k^i-1),
\end{align*}
and therefore
$$
g_i^\la(\Bk) = \frac{\la(\Bk^i,k^i-1)}{\la(\Bk)} g_i(\Bk).
$$
In particular, we have
$$
g^\la!(\Bk) = \prod  \frac{\la(\Bk^i,k^i-1)}{\la(\Bk)} g_i(\Bk),
$$
where the product is taken along a path connecting ${\BF 0}$ and $\Bk$ and, in fact,
\begin{align}  \label{eq:8.1}
g^\la!(\Bk) = \frac{g!(\Bk)}{\la(\Bk)}.
\end{align}
Then, by \eqref{eq:8.1}, we have
\begin{align*}
& Z_{\boldsymbol\fa}^\la = \sum_\Bk \frac{\boldsymbol\fa^\Bk}{g^\la!(\Bk)}
= e^{\fa^1+\fa^2} + \sum_{|\Bk|\le k_0} \left(\la(\Bk)-1\right) \frac{[\fa^1]^{k^1} [\fa^2]^{k^2}}{k^1!k^2!},\\
& \bar\nu_{\boldsymbol\fa}^\la(\Bk)
= \frac1{Z_{\boldsymbol\fa}^\la} \la(\Bk)\frac{[\fa^1]^{k^1}[\fa^2]^{k^2}}{k^1!k^2!},
\intertext{and}
& E_{\bar\nu_{\boldsymbol\fa}^\la}[k^1(k^1-1)\cdots(k^1-m+1)k^2(k^2-1)\cdots(k^2-n+1)] \\
& \hskip 15mm 
= \frac1{Z_{\boldsymbol\fa}^\la} \sum_{\Bk =(k^1,k^2)} \la(\Bk)\frac{[\fa^1]^{k^1} [\fa^2]^{k^2}}{(k^1-m)!(k^2-n)!},
\qquad m, n \ge 1.
\end{align*}

\subsection {Example of a non fully decoupleable perturbation}

We take $n=2$ and consider a perturbation of independent
random walks (or another candidate would be the multi-color system).  Let us consider 
the situation that all particles of
two different species perform totally independent random walks with
same speeds.  In other words, we take jump rates
\begin{align}  \label{eq:8.2}
g_1(\Bk)= g_1(k^1)=k^1!, \quad g_2(\Bk)= g_2(k^2)=k^2!
\end{align}
This satisfies (INV). Note that $Z_{\boldsymbol\fa} =Z_{\fa^1}Z_{\fa^2} =
e^{\fa^1}e^{\fa^2}$ for $\boldsymbol\fa =(\fa^1,\fa^2) \in (0,+\infty)^2$
and $\bar\nu_{\boldsymbol\fa} = \bar\nu_{\fa^1}\otimes\bar\nu_{\fa^2}$
is a product of two Poisson measures.  The Frame condition (FC)
is satisfied for all $\boldsymbol\fa$, since $\Ga_{12}=\Ga_{21}=0$ and
$\Ga_{ii}/\tilde{g}_i= \fa^i/\fa^i=1$ for $i=1,2$.

We perturb this system by changing finitely many $G(\Bk)$'s as $G^\la(\Bk)$ as we mentioned 
in Section \ref{sec:8.2}.  In this way, the number of parameters for the perturbation can be taken 
as many as we need and we will see that $2$ will be sufficient. The perturbation is chosen as $\lambda (\mathbf k) =1+x$ (resp. $1+y$)  with $x>-1$ (resp. $y>-1$) if $\mathbf k = (1,0)$ (resp. $\mathbf k= (0,1)$), and $\lambda (\mathbf k)=1$ otherwise. We have then 
\begin{equation*}
Z_{\boldsymbol\fa}^\la= e^{\varphi^1 + \varphi^2} + x \varphi^1 + y \varphi^2.
\end{equation*}
A simple computation shows that
\begin{align*}
E_{\bar\nu_{\boldsymbol\fa}^\la} (k^1) &= \tfrac{1}{Z_{\boldsymbol\fa}^\la} \left[ \varphi^1 e^{\varphi^1 +\varphi^2} + x \varphi^1 \right],\\
E_{\bar\nu_{\boldsymbol\fa}^\la} (k^2) &= \tfrac{1}{Z_{\boldsymbol\fa}^\la} \left[ \varphi^2 e^{\varphi^1 +\varphi^2} + y \varphi^2 \right],\\
E_{\bar\nu_{\boldsymbol\fa}^\la} (k^1 k^2) &= \tfrac{1}{Z_{\boldsymbol\fa}^\la} \, \varphi^1 \varphi^2 e^{\varphi^1 +\varphi^2},\\
E_{\bar\nu_{\boldsymbol\fa}^\la} ([k^1]^2) &= \tfrac{1}{Z_{\boldsymbol\fa}^\la} \left[ (\varphi^1 +[\varphi^1]^2) e^{\varphi^1 +\varphi^2} + x \varphi^1 \right],\\
E_{\bar\nu_{\boldsymbol\fa}^\la} ([k^2]^2) &= \tfrac{1}{Z_{\boldsymbol\fa}^\la} \left[ (\varphi^2 +[\varphi^2]^2) e^{\varphi^1 +\varphi^2} + y \varphi^2 \right].
\end{align*}
Later we will also need some third moments estimates:
\begin{align*}
E_{\bar\nu_{\boldsymbol\fa}^\la} ([k^1]^3) &=  \tfrac{1}{Z_{\boldsymbol\fa}^\la} \left[ ([\varphi^1]^3 + 3 [\varphi^1]^2 + \varphi^1) e^{\varphi^1 +\varphi^2} + x \varphi^1 \right],\\
E_{\bar\nu_{\boldsymbol\fa}^\la} ([k^2]^3) &=  \tfrac{1}{Z_{\boldsymbol\fa}^\la} \left[ ([\varphi^2]^3 + 3 [\varphi^2]^2 + \varphi^2) e^{\varphi^1 +\varphi^2} + y \varphi^2 \right],\\
E_{\bar\nu_{\boldsymbol\fa}^\la} ([k^1]^2 k^2) &=\tfrac{1}{Z_{\boldsymbol\fa}^\la} \varphi^2 \varphi^1 ( \varphi^1 +1) e^{\varphi^1 +\varphi^2},\\
E_{\bar\nu_{\boldsymbol\fa}^\la} ([k^2]^2 k^1) &=\tfrac{1}{Z_{\boldsymbol\fa}^\la}\varphi^2 \varphi^1 ( \varphi^2 +1) e^{\varphi^1 +\varphi^2}.
\end{align*}
It follows that 
\begin{align*}
{\rm cov}\Big(\bar\nu_{\boldsymbol\fa}^\la\Big)= \cfrac{1}{[Z_{\boldsymbol\fa}^\la]^2}
\left(
\begin{array}{cc}
N_{11}
&N_{12} \\
&\\
N_{21} 
&
N_{22}
\end{array}
\right)
\end{align*}
with
\begin{align*}
N_{11} &= \varphi^1 \{e^{2(\varphi^1+\varphi^2)} + x e^{\varphi^1 + \varphi^2} ([\varphi^1]^2 -\varphi^1 +1) + y e^{\varphi^1 + \varphi^2} (1+\varphi^1) \varphi^2 + xy \varphi^2\},\\
N_{12}&=N_{21}=x e^{\varphi^1 + \varphi^2} \varphi^1 \varphi^2 (\varphi^1 -1) +  y e^{\varphi^1 + \varphi^2} \varphi^1 \varphi^2 (\varphi^2 -1) - xy \varphi^{1}\varphi^2,\\
N_{22}&=\varphi^2 \{e^{2(\varphi^1+\varphi^2)} + y e^{\varphi^1 + \varphi^2} ([\varphi^2]^2 -\varphi^2 +1)+ x e^{\varphi^1 + \varphi^2} (1+\varphi^2) \varphi^1 + xy \varphi^1\}.
\end{align*}

The Frame condition (FC)
is satisfied if 
\begin{align}
\label{eq:choicephi2}
&x (\varphi^1 -1) + y (\varphi^2 -1) =xy e^{-(\varphi^1 +\varphi^2)},\\
&x[(\varphi^1 -1)^2 -\varphi^1 \varphi^2] + y [\varphi^1 \varphi^2 - (\varphi^2 -1)^2] = xy (\varphi^1- \varphi^2) e^{-(\varphi^1+\varphi^2)}.
\end{align}
This implies 
\begin{equation*}
x=y=(\varphi^1+\varphi^2 -2) e^{\varphi^1 + \varphi^2}
\end{equation*}
or 
\begin{equation*}
\varphi^1 + \varphi^2 =1, \quad x (\varphi^1 -1) -y \varphi^1 -e^{-1} xy =0.
\end{equation*}
We make the choice 
\begin{equation*}
\varphi^1 + \varphi^2 =1, \quad x (\varphi^1 -1) -y \varphi^1 -e^{-1} xy =0.
\end{equation*}

We can choose $\varphi^1 \in (0,1)$ and $x>-1$ arbitrarily but the condition $y>-1$ has to be satisfied which gives some restrictions. Assuming this, we get then that 
\begin{align*}
& Z_{\boldsymbol\fa}^\la = e+x \varphi^1 + y \varphi^2
\end{align*}
and
\begin{align*}
{\rm cov}\Big(\bar\nu_{\boldsymbol\fa}^\la\Big)=
\left(
\begin{array}{cc}
e\varphi^1 &0\\
0&e \varphi^2 
\end{array}
\right)
\end{align*}
To simplify notations we denote by $M:=M_{\boldsymbol\fa}^\la$ the matrix 
\begin{align*}
M&=\left[ {\rm cov}\Big(\bar\nu_{\boldsymbol\fa}^\la\Big)  \right]^{-1}= e^{-1} \, 
\left(
\begin{array}{cc}
1/\varphi^1 &0\\
0&1/\varphi^2 
\end{array}
\right).
\end{align*}

With this choice, we compute now the coupling constants $\Gamma_{j\ell}^i$ by using the computations performed in the proof of Proposition  \ref{prop:6.1-b}, which give
\begin{equation*}
\Gamma_{j\ell}^i =c\, \cfrac{\sqrt{\varphi^i \varphi^j \varphi^\ell }}{(\varphi^i M_{ii})^{3/2}} \left\{ M_{i\ell} M_{ij} - \sum_{m,p,q=1}^2 M_{ip} M_{qj} M_{m\ell} \, \kappa (k^p,k^q,k^\ell) \right\}
\end{equation*}
where $\kappa (X,Y,Z)$ denotes the joint cumulant of the random variables $X,Y,Z$ under the probability measure ${\bar \nu}_{\boldsymbol\fa}^\la$. We obtain
\begin{align}
\label{eq:Gamma-exp}
\Gamma_{11}^1 &= c \sqrt{M_{11}} \left\{1- M_{11}\,  \kappa (k^1,k^1,k^1)\right\}=\cfrac{c}{\sqrt{ e \varphi_1}} \left\{1- \cfrac{\kappa (k^1,k^1,k^1)}{e \varphi^1} \right\} ,\\
\Gamma_{22}^2 &= c \sqrt{M_{22}} \left\{1- M_{22} \, \kappa (k^2,k^2,k^2)\right\} = \cfrac{c}{\sqrt{ e \varphi^2}} \left\{1- \cfrac{\kappa (k^2,k^2,k^2)}{e \varphi^2} \right\} ,\\
\Gamma_{11}^2 &= - c \cfrac{\varphi^1}{\varphi^2} \cfrac{M_{11}^2}{\sqrt{M_{22}}}\,  \kappa (k^1, k^1,k^2) = - c \cfrac{1}{e^{3/2} [\varphi^2]^{1/2} \varphi^1} \, \kappa (k^1,k^1, k^2),\\
\Gamma_{22}^1 &= - c \cfrac{\varphi^2}{\varphi^1} \cfrac{M_{22}^2}{\sqrt{M_{11}}}\,  \kappa (k^2, k^2,k^1)= - c \cfrac{1}{e^{3/2} [\varphi^1]^{1/2} \varphi^2} \, \kappa (k^2,k^2, k^1).
\end{align}
We get the following expressions for the following moments 
\begin{align*}
E_{\bar\nu_{\boldsymbol\fa}^\la} (k^1) &= \tfrac{1}{Z_{\boldsymbol\fa}^\la} \left[ \varphi^1 e^{\varphi^1 +\varphi^2} + x \varphi^1 \right]=\cfrac{\varphi^1 (e +x) }{e+x \varphi^1 + y \varphi^2}\\
E_{\bar\nu_{\boldsymbol\fa}^\la} (k^2) &= \tfrac{1}{Z_{\boldsymbol\fa}^\la} \left[ \varphi^2 e^{\varphi^1 +\varphi^2} + y \varphi^2 \right]=\cfrac{\varphi^2 (e +y) }{e+x \varphi^1 + y \varphi^2},\\
E_{\bar\nu_{\boldsymbol\fa}^\la} (k^1 k^2) &= \tfrac{1}{Z_{\boldsymbol\fa}^\la} \varphi^1 \varphi^2 e^{\varphi^1 +\varphi^2} =\cfrac{e \varphi^1 \varphi^2 }{e+x \varphi^1 + y \varphi^2}\\
E_{\bar\nu_{\boldsymbol\fa}^\la} ([k^1]^2) &= \tfrac{1}{Z_{\boldsymbol\fa}^\la} \left[ (\varphi^1 +[\varphi^1]^2) e^{\varphi^1 +\varphi^2} + x \varphi^1 \right]= \cfrac{(\varphi^1 + [\varphi^1]^2)  e + x \varphi^1 }{e+x \varphi^1 + y \varphi^2 },\\
E_{\bar\nu_{\boldsymbol\fa}^\la} ([k^2]^2) &= \tfrac{1}{Z_{\boldsymbol\fa}^\la} \left[ (\varphi^2 +[\varphi^2]^2) e^{\varphi^1 +\varphi^2} + y \varphi^2 \right]= \cfrac{(\varphi^2 + [\varphi^2]^2)  e + y \varphi^2 }{e+x \varphi^1 + y \varphi^2}\\
E_{\bar\nu_{\boldsymbol\fa}^\la} ([k^1]^3) &=  \tfrac{1}{Z_{\boldsymbol\fa}^\la} \left[ ([\varphi^1]^3 + 3 [\varphi^1]^2 + \varphi^1) e^{\varphi^1 +\varphi^2} + x \varphi^1 \right]=\cfrac{([\varphi^1]^3 + 3 [\varphi^1]^2 + \varphi^1) e  + x \varphi^1}{e+x \varphi^1 + y \varphi^2}\\
E_{\bar\nu_{\boldsymbol\fa}^\la} ([k^2]^3) &=\tfrac{1}{Z_{\boldsymbol\fa}^\la} \left[( [\varphi^2]^3 + 3[\varphi^2]^2 +\varphi^2) e^{\varphi^1 +\varphi^2}+y \varphi^2 \right]= \cfrac{([\varphi^2]^3 + 3 [\varphi^2]^2 + \varphi^2) e  + y \varphi^2}{e+x\varphi^1 + y \varphi^2},\\
E_{\bar\nu_{\boldsymbol\fa}^\la} ([k^1]^2 k^2) &=\tfrac{1}{Z_{\boldsymbol\fa}^\la} \varphi^2 \varphi^1 ( \varphi^1 +1) e^{\varphi^1 +\varphi^2}= \cfrac{\varphi^2 \varphi^1 ( \varphi^1 +1) e}{e+x\varphi^1 + y \varphi^2},\\
E_{\bar\nu_{\boldsymbol\fa}^\la} ([k^2]^2 k^1) &=\tfrac{1}{Z_{\boldsymbol\fa}^\la}\varphi^2 \varphi^1 ( \varphi^2 +1) e^{\varphi^1 +\varphi^2}=\cfrac{\varphi^2 \varphi^1 ( \varphi^2 +1) e}{e+x\varphi^1 + y \varphi^2}.
\end{align*}

Recall the discussion at the beginning of Section \ref{eq:sigmagamma}. Since $n=2$ the system is fully decoupleable  if and only if there exists an orthogonal matrix $\sigma$ such that 
$$(\sigma \circ \Gamma)_{12}^1 = (\sigma\circ \Gamma)_{12}^{2} = 0. $$ 
We recall that any orthogonal matrix $\sigma$ is in the form 
\begin{equation*}
\sigma_\psi = 
\left(
\begin{array}{cc}
\cos \psi & -\sin  \psi\\
\sin  \psi & \cos  \psi
\end{array}
\right) 
\quad \text{or} \quad 
{\tilde \sigma}_\psi = 
\left(
\begin{array}{cc}
-\cos  \psi & \sin  \psi\\
\sin  \psi & \cos  \psi
\end{array}
\right), 
\end{equation*}
where $ \psi \in [0,2\pi]$.
 
\begin{center}
\begin{figure}[!h]
\begin{center}
\includegraphics[scale=0.5]{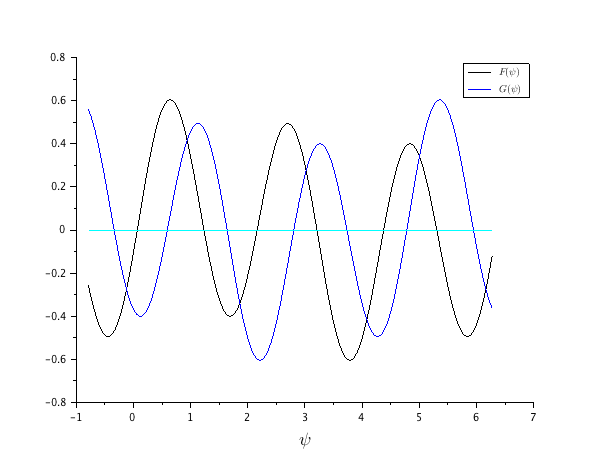}
\end{center}
\caption{Graphs of the functions $F,G$ when $\varphi_1=0.49$, $x=3$ (and therefore $y=-0.96>-1$).}
\label{figure1}
\end{figure}
\end{center}
Therefore, the condition $(\sigma_\psi \circ \Gamma)_{12}^1 = 0$  is equivalent to
\begin{equation}
\begin{split}
F(\psi)&=\cos^2  \psi \sin  \psi \; \Gamma_{11}^1 +\sin^2  \psi \cos  \psi \; \Gamma_{22}^2\\ 
&+(\cos^3  \psi - 2 \cos  \psi \sin^2  \psi )\; \Gamma_{11}^2 + (\sin^3  \psi - 2 \sin  \psi \cos^2  \psi)\;  \Gamma_{22}^1 \; = \; 0. 
\end{split}
\end{equation}
while the condition $({\tilde \sigma}_\psi \circ \Gamma)_{12}^1=0$ is equivalent to the same condition.
Similarly the condition $(\sigma_\psi \circ \Gamma)_{12}^2 = 0$ is equivalent to 
\begin{equation}
\label{eq:finpsi2}
\begin{split}
G(\psi)&=\sin^2  \psi \cos  \psi \; \Gamma_{11}^1 -\cos^2  \psi \sin  \psi \; \Gamma_{22}^2\\ 
& +(2 \sin  \psi \cos^2  \psi -\sin^3 \psi)\;  \Gamma_{11}^2+(\cos^3  \psi - 2 \cos  \psi \sin^2  \psi )\; \Gamma_{22}^1 \; = \; 0,
\end{split}
\end{equation}
while the condition $({\tilde \sigma}_\psi \circ \Gamma)_{12}^2=0$ is equivalent to the same condition.


A numerical simulation (see Figure \ref{figure1}) shows that if we choose $x=3$ and $\varphi_1=0.49$, then the functions $F$ and $G$ (resp. $F$ and $H$) never vanish simultaneously. For these values, we have that $\Gamma_{11}^1, \Gamma_{22}^2, \Gamma_{11}^2, \Gamma_{22}^1$ are all non zero. 


%


\section*{Acknowledgements}
The work of C. Bernardin has been supported by the projects EDNHS ANR-14- CE25-0011, LSD ANR-15-CE40-0020-01 of the French National Research Agency (ANR), and funding from the European Research Council (ERC) under  the European Union's Horizon 2020 research and innovative programme (grant agreement No 715734).  T. Funaki was supported in part by JSPS KAKENHI, Grant-in-Aid for Scientific Researches (A) 18H03672 and (S) 16H06338.
S. Sethuraman was supported by grant ARO W911NF-181-0311, a Simons Foundation Sabbatical grant, and by a Japan Society for the Promotion of Science Fellowship. 


\begin{thebibliography}{99}

\bibitem{ACQ}
{\sc G. Amir, I. Corwin and J. Quastel},
 {\it Probability distribution of the free energy of the continuum directed random polymer in $1+1$ dimensions},
Commun. Pure Appl. Math., {\bf 64} (2011), 466--537.

\bibitem{Andjel}{\sc E. Andjel,} 
{\it Invariant measures for the zero range process,} Ann. Probab., {\bf 10}  (1982), 525--547.

\bibitem{Assing} {\sc S. Assing},
 {\it A rigorous equation for the Cole-Hopf solution of the conservative KPZ dynamics},
 Stoch. Partial Differ. Eq. Anal. Comput., {\bf 1} (2013), 365--388.

\bibitem{BG}
{\sc L. Bertini and G. Giacomin}, {\it Stochastic Burgers and KPZ equations from particle systems},
Commun. Math. Phys.,  {\bf 183} (1997), 571--607.

\bibitem{CP} {\sc P.\ Caputo and G.\ Posta},
{\it  Entropy Dissipation Estimates in a Zero-Range Dynamics},
Prob. Theory Rel. Fields, {\bf 139} (2007), 65--87.

\bibitem{Corwin_review}
{\sc I. Corwin}, {\it The Kardar-Parisi-Zhang equation and universality class}, Random Matrices: Theory and Applications,
{\bf 1} (2012), 1130001 (76 pages).

\bibitem{Corwin_Tsai}
{\sc I. Corwin and L-C. Tsai}, {\it SPDE limit of weakly inhomogeneous ASEP}, (2018) arXiv: 1806.09682

\bibitem{Dembo_Tsai}
{\sc A. Dembo and L-C. Tsai},
{\it Weakly asymmetric non-simple exclusion process and the Kardar-Parisi-Zhang equation},
Commun. Math. Phys., {\bf 341} (2016), 219--261.

\bibitem{DSZ}{\sc N.\ Dirr, M.G.\ Stamatakis and J.\ Zimmer},
{\it Hydrodynamic limit of condensing two-species zero range processes
with sub-critical initial profiles}, J.\ Stat.\ Phys., {\bf 168} (2017),
794--825.

\bibitem{EK}{\sc D.\ Erta\c{s} and M.\ Kardar},
{\it Dynamic roughening of directed lines},
Phys. Rev. Lett., {\bf 69} (1992), 929--932.

\bibitem{F91-1}
{\sc T.\ Funaki},
{\it The reversible measures of multi-dimensional Ginzburg-Landau type continuum model},
Osaka J.\ Math., {\bf 28} (1991), 463--494.

\bibitem{F18}
{\sc T.\ Funaki},
{\it Invariant measures in coupled KPZ equations},
Stochastic Dynamics Out of Equilibrium (Institut Henri Poincar\'e,
2017), eds.\ G.\ Giacomin, S.\ Olla, E.\ Saada, H.\ Spohn and G.\ Stoltz,
Springer, 2019, 560--568.

\bibitem{FH17}
{\sc T.\ Funaki and M.\ Hoshino},
{\it A coupled KPZ equation, its two types of approximations
and existence of global solutions},
J.\ Funct.\ Anal., {\bf 273}  (2017), 1165--1204.

\bibitem{Funaki_Quastel}
{\sc T. Funaki and J. Quastel},
{\it KPZ equation, its renormalization and invariant measures},  
Stoch. Partial Differ. Eq. Anal. Comput., {\bf 3} (2015), 159--220.

\bibitem{GJ}
{\sc P.\ Gon\c{c}alves and M.\ Jara}, 
{\it Nonlinear fluctuations of weakly asymmetric interacting particle systems},  Arch. Ration. Mech. Anal.,
{\bf 212} (2014), 597--644.

\bibitem{GJS}{\sc P.\ Gon\c{c}alves, M.\ Jara and S.\ Sethuraman},
{\it A stochastic Burgers equation from a class of microscopic interactions},
Ann.\ Probab., {\bf 43} (2015), 286--338.

\bibitem{GPS} {\sc P.\ Gon\c{c}alves, N. Perkowski and M. Simon},
{\it Derivation of the stochastic Burgers equation with Dirichlet boundary conditions from the WASEP}, 
arXiv:1710.11011.

\bibitem{GS03}{\sc S.\ Gro\ss kinsky and H.\ Spohn},
{\it Stationary measures and hydrodynamics of zero range processes
with several species of particles}, Bull.\ Braz.\ Math.\ Soc.\ (N.S.),
{\bf 34} (2003), 489--507.

\bibitem{G}{\sc S.\ Gro\ss kinsky}, {\it Equivalence of ensembles for
two-species zero-range invariant measures}, Stochastic Process.\ Appl.,
{\bf 118} (2008), 1322--1350.

\bibitem{Gubinelli_Imkeller_Perkowski} {\sc M. Gubinelli, P. Imkeller and N. Perkowski},
{\it Paracontrolled distributions and singluar PDE's}, Forum Math. Pi, {\bf 3} (2015), e6 75pgs.

\bibitem{Gub_J} {\sc M. Gubinelli and M. Jara}, {\it Regularlization by noise and stochastic Burgers equations}, 
Stoch. Partial Differ. Eq. Anal. Comput., {\bf 1} (2013), 325--350.


\bibitem{GP18}
{\sc M. Gubinelli and N. Perkowski}, {\it Energy solutions of KPZ are unique}, J. Amer. Math. Soc., {\bf 31} (2018), 427--471.

\bibitem{GPnew}
{\sc M. Gubinelli and N. Perkowski}, {\it The infinitesimal generator of the stochastic Burgers equation,} 
arXiv:1810.12014.

\bibitem{H}
{\sc M. Hairer}, {\it Solving the KPZ equation}, Ann. Math., {\bf 178} (2013), 559--664.

\bibitem{JS}
{\sc J. Jacod and A. Shiryaev},
{\it Limit Theorems for Stochastic Processes},
Grundlehren der Mathematichen Wissenschaften {\bf 288}, (2003) Springer, Berlin.

\bibitem{Jankowski} {\sc H. \ Jankowski}, {\it Logarithmic Sobolev inequality for the inhomogeneous zero range process},
(2006) arXiv: 0606778.


\bibitem{KL}
{\sc C. Kipnis and C. Landim}, {\it Scaling Limits of Interacting Particle Systems}, (1999) Springer-Verlag, New York.

\bibitem{KK}
{\sc J. Krug and T. Kriecherbauer},
{\it A pedestrian's view on interacting particle systems, KPZ universality and random matrices},
J. Phys. A: Math. and Theor., {\bf 43} (2010), 403001 (41pgs)

\bibitem{Kupiainen}
{\sc A. Kupiainen}, {\it Renormalization group and stochastic PDE's},
Ann. IHP, {\bf 17} (2016), 497--535.

\bibitem{Ortmann_Quastel_Remenik}
{\sc J. Ortmann, J. Quastel and D. Remenik},
{\it Exact formulas for the random growth with half-flat initial data}, Ann. Appl. Probab., {\bf 26} (2016), 507--548.

\bibitem{LSV}
{\sc C.\  Landim, S.\ Sethuraman and S.R.S.\  Varadhan},
 {\it Spectral gap for zero-range dynamics}, Ann. Probab.,
{\bf 24} (1996), 1871--1902.

\bibitem{Mitoma}
{\sc I.\ Mitoma}, {\it Tightness of probabilities on $C([0,1], \Y')$ and $D([0,1], \Y')$,}   Ann. Probab.,  {\bf 11} (1983), 989--999.

\bibitem{PSSS}
{\sc V. Popkov,  A. Schadschneider, J. Schmidt and G. Schutz,}
{\it Fibonacci family of dynamical universality classes,} PNAS, {\bf 112} (2015), 12645--12650;
arXiv: 1505.04461

\bibitem{Quastel_Spohn} {\sc J. Quastel and H. Spohn},
{\it The one-dimensional KPZ equation and its universality class}, J. Stat. Phys., {\bf 160} (2015). 965--984.

\bibitem{Sasamoto_Spohn}
{\sc T. Sasamoto and H. Spohn}, {\it One-dimensional KPZ equation:  An exact solution and its universality},
Phys. Rev. Lett., {\bf 104} (2010), 230602 (4 pages).

\bibitem{S_extremal}
{\sc S.\ Sethuraman}, {\it On extremal measues for conservative particle systems},
Ann. IHP Prob. et Stat., {\bf 37} (2001), 139--154.

\bibitem{S_longrange}
{\sc S.\ Sethuraman}, {\it On microscopic derivation of a fractional stochastic Burgers equation},
Commun. Math. Phys., {\bf 341} (2016), 625--665.


\bibitem{Spohn_Stolz}
{\sc H. Spohn and G. Stolz}, {\it Nonlinear fluctuating hydrodynamics in one dimension:  the case of two conserved fields,}
J. Stat. Phys., {\bf 160} (2015), 861--884; arXiv: 1410.7896

\bibitem{Spohnbook}
{\sc H.\ Spohn}, {\it Large Scale Dynamics of
    Interacting Particles},  (1991) Springer-Verlag, Berlin.
		

\bibitem{Walsh1} 
{\sc J.B. Walsh,} {\it An Introduction to Stochastic Partial Differential Equations}, Ecole d'\'et\'e
de probabilit\'es de Saint-Flour, XIV 1984, 265--439, Lecture Notes in Math. {\bf 1180} Springer,
Berlin.


\end{thebibliography}
\end{document}